\DeclareMathOperator{\Lie}{Lie}\DeclareMathOperator{\im}{im}
\DeclareMathOperator{\Rad}{Rad}
\DeclareMathOperator{\gr}{gr}
\DeclareMathOperator{\rk}{rk}
\DeclareMathOperator{\ad}{ad}
\begin{document}

\newcounter{rownum}
\setcounter{rownum}{0}
\newcommand{\ab}{\addtocounter{rownum}{1}\arabic{rownum}}

\newcommand{\x}{$\times$}
\newcommand{\bb}{\mathbf}

\newcommand{\Ind}{\mathrm{Ind}}
\newcommand{\Char}{\mathrm{char}}
\newcommand{\hra}{\hookrightarrow}
\newtheorem{lemma}{Lemma}[section]
\newtheorem{theorem}[lemma]{Theorem}
\newtheorem*{TA}{Theorem A}
\newtheorem*{TB}{Theorem B}
\newtheorem*{TC}{Theorem C}
\newtheorem*{CorC}{Corollary C}
\newtheorem*{TD}{Theorem D}
\newtheorem*{TE}{Theorem E}
\newtheorem*{PF}{Proposition E}
\newtheorem*{C3}{Corollary 3}
\newtheorem*{T4}{Theorem 4}
\newtheorem*{C5}{Corollary 5}
\newtheorem*{C6}{Corollary 6}
\newtheorem*{C7}{Corollary 7}
\newtheorem*{C8}{Corollary 8}
\newtheorem*{claim}{Claim}
\newtheorem{cor}[lemma]{Corollary}
\newtheorem{conjecture}[lemma]{Conjecture}
\newtheorem{prop}[lemma]{Proposition}
\newtheorem{question}[lemma]{Question}
\theoremstyle{definition}
\newtheorem{example}[lemma]{Example}
\newtheorem{examples}[lemma]{Examples}
\newtheorem{algorithm}[lemma]{Algorithm}
\newtheorem*{algorithm*}{Algorithm}
\theoremstyle{remark}
\newtheorem{remark}[lemma]{Remark}
\newtheorem{remarks}[lemma]{Remarks}
\newtheorem{obs}[lemma]{Observation}
\theoremstyle{definition}
\newtheorem{defn}[lemma]{Definition}

  \def\hal{\unskip\nobreak\hfil\penalty50\hskip10pt\hbox{}\nobreak
  \hfill\vrule height 5pt width 6pt depth 1pt\par\vskip 2mm}

\renewcommand{\labelenumi}{(\roman{enumi})}
\newcommand{\Hom}{\mathrm{Hom}}
\newcommand{\Int}{\mathrm{int}}
\newcommand{\Ext}{\mathrm{Ext}}
\newcommand{\opH}{\mathrm{H}}
\newcommand{\D}{\mathcal{D}}
\newcommand{\soc}{\mathrm{Soc}}
\newcommand{\SO}{\mathrm{SO}}
\newcommand{\Sp}{\mathrm{Sp}}
\newcommand{\SL}{\mathrm{SL}}
\newcommand{\GL}{\mathrm{GL}}
\newcommand{\OO}{\mathcal{O}}
\newcommand{\Y}{\mathbf{Y}}
\newcommand{\X}{\mathbf{X}}
\newcommand{\diag}{\mathrm{diag}}
\newcommand{\End}{\mathrm{End}}
\newcommand{\tr}{\mathrm{tr}}
\newcommand{\Stab}{\mathrm{Stab}}
\newcommand{\red}{\mathrm{red}}
\newcommand{\Aut}{\mathrm{Aut}}
\renewcommand{\H}{\mathcal{H}}
\renewcommand{\u}{\mathfrak{u}}
\newcommand{\Ad}{\mathrm{Ad}}
\newcommand{\N}{\mathcal{N}}
\newcommand{\Z}{\mathbb{Z}}
\newcommand{\la}{\langle}\newcommand{\ra}{\rangle}
\newcommand{\gl}{\mathfrak{gl}}
\newcommand{\g}{\mathfrak{g}}
\newcommand{\F}{\mathbb{F}}
\newcommand{\m}{\mathfrak{m}}
\renewcommand{\b}{\mathfrak{b}}
\newcommand{\p}{\mathfrak{p}}
\newcommand{\q}{\mathfrak{q}}
\renewcommand{\l}{\mathfrak{l}}
\newcommand{\del}{\partial}
\newcommand{\h}{\mathfrak{h}}
\renewcommand{\t}{\mathfrak{t}}
\renewcommand{\k}{\mathfrak{k}}
\newcommand{\Gm}{\mathbb{G}_m}
\renewcommand{\c}{\mathfrak{c}}
\renewcommand{\r}{\mathfrak{r}}
\newcommand{\n}{\mathfrak{n}}
\newcommand{\s}{\mathfrak{s}}
\newcommand{\Q}{\mathbb{Q}}
\newcommand{\z}{\mathfrak{z}}
\newcommand{\pso}{\mathfrak{pso}}
\newcommand{\so}{\mathfrak{so}}
\renewcommand{\sl}{\mathfrak{sl}}
\newcommand{\psl}{\mathfrak{psl}}
\renewcommand{\sp}{\mathfrak{sp}}
\newcommand{\Ga}{\mathbb{G}_a}

\newenvironment{changemargin}[1]{%
  \begin{list}{}{%
    \setlength{\topsep}{0pt}%
    \setlength{\topmargin}{#1}%
    \setlength{\listparindent}{\parindent}%
    \setlength{\itemindent}{\parindent}%
    \setlength{\parsep}{\parskip}%
  }%
  \item[]}{\end{list}}

\parindent=0pt
\addtolength{\parskip}{0.5\baselineskip}

\subjclass[2010]{17B45}
\title{Maximal subalgebras of Cartan type in the exceptional Lie algebras}
\author{Sebastian Herpel}
\address{Ruhr Universit\"at Bochum \\ Bochum, Germany} 
\email{sebastian.herpel@rub.de {\text{\rm(Herpel)}}}

\author{David I. Stewart}
\address{King's College\\ Cambridge, UK} \email{dis20@cantab.net {\text{\rm(Stewart)}}}
\pagestyle{plain}
\begin{abstract}In this paper we initiate the study of the maximal subalgebras of exceptional simple classical Lie algebras $\g$ over algebraically closed fields $k$ of positive characteristic $p$, such that the prime characteristic is good for $\g$. In this paper we deal with what is surely the most unnatural case; that is, where the maximal subalgebra in question is a simple subalgebra of non-classical type. We show that only the first Witt algebra can occur as a subalgebra of $\g$ and give an explicit classification of when it is maximal in $\g$.\end{abstract}
\maketitle
{\small \tableofcontents}
\section{Introduction}

The maximal subalgebras of the simple finite-dimensional Lie algebras over the complex numbers were first established by Dynkin \cite{Dyn52}. 
The fact that in characteristic $0$ there is such a good correpondence between connected closed subgroups of simple algebraic groups and subalgebras of their Lie algebras means that this classification lifts readily to a classification of maximal closed connected subgroups of the corresponding simple algebraic groups over algebraically closed fields of characteristic $0$.
Gary Seitz took up the case of achieving a classification of connected maximal closed subgroups of the simple algebraic groups over fields of positive characteristic.
This was achieved in \cite{Sei87} for the classical algebraic groups and, under some fairly mild restrictions on the characteristic $p$ of $k$ in \cite{Sei91} for the exceptional algebraic groups. Later, in work by Liebeck and Seitz \cite{LS04}, the latter classification (for exceptional algebraic groups) was completed to cover all characteristics and extended to all maximal, closed, positive dimensional subgroups (not necessarily connected).
All these positive characteristic results rely on work of Donna Testerman \cite{Tes88}, \cite{Tes89}, \cite{Tes92} which, particularly, classify and construct subgroups of type $A_1$. The extension of the original work on subgroups of the classical groups continues to evolve. See \cite{BGT13} for the latest developments.

In this paper we consider the analoguous question for modular Lie algebras, a more direct analogue of Dynkin's original work. Apart from the intrinsic motivation to generalise Dynkin's results to positive characteristic, it is worth mentioning that maximal subalgebras of modular Lie algebras play an important role in the classification due to Premet and Strade \cite{PS06} of simple Lie algebras over algebraically closed fields of dimension $p>3$ as they give rise to Weisfeiler filtrations. Whereas the classification of simple algebraic groups by their root data is by now very well-documented, the classification of simple Lie algebras is highly non-trivial and is considered to be likely to be out of reach for the primes $p=2$ and $p=3$. Let us recall the result for $p>3$: a simple Lie algebra $L$ is either classical (i.e. it is the Lie algebra of a simple algebraic group, or a central quotient thereof); of one of the four families of Cartan type simple Lie algebras $W$, $K$, $S$ or $H$ (either graded, or in case $H$ or $S$, a filtered deformation); or $p=5$ and it is one of the Melikyan algebras.

Perhaps it is not surprising that the classification of maximal subalgebras of Lie algebras $\g$ over algebraically closed fields of characteristic at least $5$, even just those which come from algebraic groups $G$, is likely to be difficult. For instance, a fact one takes for granted when working with simple algebraic groups is a theorem of Borel and Tits which has as a corollary that all maximal non-reductive subgroups of $G$ are parabolic. But this is not true in general. For instance when $p|n$, the maximal non-semsimple subalgebras of the simple Lie algebra $\psl_n$ need not be parabolic. See O.~K.~Ten's work \cite{Ten87} for a classification of maximal non-semisimple subalgebras in the case that $\g$ is a classical Lie algebra of type $A$--$D$. The paper \cite{Ten87a} also gives a fairly coarse classification of the maximal semisimple subalgebra of these same Lie algebras. Lastly, it appears that the same author had at some point annouced a result classifying the maximal subalgebras of $\g_2$  when the characteristic is at least $5$, but this remains unpublished.

We should mention one other serious piece of work on maximal subalgebras of simple Lie algebras, due to H.~Melikyan \cite{Mel05}, who classifies in most cases, the maximal graded subalgebras of the Cartan type Lie algebras.

It is the point of this paper to initiate the study of maximal subalgebras of exceptional simple Lie algebras in good characteristic. Here one is fortunate that $p>3$ and so the Premet--Strade classification of simple Lie algebras holds. In this paper we are concerned with what is surely the most unnatural case; that is, where the maximal subalgebra in question is a simple subalgebra of non-classical type. 

The most straightforward non-classical algebra to describe is the first Witt algebra $W_1=W(1;1)$ of dimension $p$, the Lie algebra of derivations of the truncated polynomial ring 
$k[X]/\la X^p\ra$, where $p$ is the characteristic of $k$. When $p=2$ it is not simple, when $p=3$ it is isomorphic to $\sl_2$, but futher than that, it is simple and there are no more coincidences with other Lie algebras mentioned in the classification.
It has a basis $\{\del,X\del,\dots, X^{p-1}\del\}$, with structure constants given by $[X^i\del,X^j\del]=(j-i)X^{i+j-1}\del$.  The first Witt algebra does put in a number of guest appearances as subalgebras of $\g$ and there are precisely four occasions when it is maximal. The existence of $p$-subalgebras of type $W_1$ is essentially established by classifying the nilpotent element representing $\del$.

\begin{theorem}\label{necForW1s} 
Let $\g$ be a simple classical Lie algebra of exceptional type.
Suppose $W\cong W_1$ is a $p$-subalgebra of $\g$ and $p$ a good prime for $\g$. Let $\del\in W$ be represented by the nilpotent element $e \in \g$.
Then the following hold:
\begin{itemize}
\item[(i)] $e$ is a regular element in a Levi subalgebra $\l$ of $\g$ and the root 
system associated to $\l$ is irreducible.

\item[(ii)] For $h$ the Coxeter number of $\l$, we have either $p=h+1$ or $\l$ is of type $A_n$ and  $p=h$.

\item[(iii)] If $e$ is regular in $\g$ then $W$ is unique up to conjugacy.

\item[(iv)] If $e$ is regular and $\g$ is not of type $E_6$ then $W$ is maximal.

\item[(v)] If $e$ is not regular in $\g$ then $W$ normalises a non-trivial abelian subalgebra of $\g$, hence is 
not maximal.
\end{itemize}
Conversely, suppose that $e \in \g$ is nilpotent and $(e,p)$ satisfies the conditions
(i) and (ii) above. Then there exists a $p$-subalgebra isomorphic to $W_1$ with
$\del$ represented by $e$.
\end{theorem}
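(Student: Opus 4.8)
The plan is to read off from the abstract structure of $W_1$ a rigid package of data inside $\g$, and then match it against the classification of nilpotent orbits in the exceptional Lie algebras. Write $e\in\g$ for the image of $\del$ and $h_0\in\g$ for the image of the Euler operator $X\del$. Two facts drive everything. First, $h_0$ is toral: in $W_1$ one has $(X\del)^{[p]}=X\del$, and since $W_1$ is centreless its $[p]$-operation is unique, so any Lie isomorphism $W\cong W_1$ is automatically restricted and $h_0^{[p]}=h_0$; hence $\ad h_0$ is semisimple with eigenvalues in $\F_p$ and $W=\bigoplus_{\bar\imath\in\F_p}W_{\bar\imath}$ is a $\Z/p$-graded subalgebra meeting each graded piece of $\g$ in a line, with $e$ of $\ad h_0$-weight $\overline{-1}$. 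Secondly, $(e,H,f)$ with $H=-2h_0$ and $f$ the image of $-X^2\del$ is an $\sl_2$-triple, and $\ad e$ restricted to $W$ is a single Jordan block of size $p$ (as $\ad\del$ is $d/dX$ on $k[X]/\la X^p\ra$). Finally $\del^{[p]}=0$ forces $e^{[p]}=0$, so $\ad_\g(e)^p=0$; thus $\ad_\g(e)$ has a Jordan block of size exactly $p$ and none larger.

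For (i), I would take $\l$ to be the minimal Levi of $\g$ containing $e$ and use the associated cocharacter $\tau$ of the nilpotent $e$ in good characteristic, normalised so that $e\in\g(\tau;2)$ and $\mathrm d\tau(1)=H$. Then the images of the $X^i\del$ lie in the honest $\tau$-eigenspaces of degrees $j_i=2-2i$ (these degrees are forced by additivity of the grading together with $[\del,X^i\del]=iX^{i-1}\del\neq0$). The triple $(e,H,f)$ visibly occupies degrees $2,0,-2$ and is the principal $\sl_2$ of $[\l,\l]$, which identifies $e$ as a regular (principal) nilpotent of $\l$. Irreducibility of the root system of $\l$ I would extract from indecomposability of $W_1$: a splitting $\l=\l_1\oplus\l_2$ would split $e=e_1+e_2$ and, through the gradings, split the single size-$p$ block of $\ad e|_W$, contradicting that $W_1$ is simple and hence $\ad e$-indecomposable as a module for $ke$.

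Part (ii) is the crux and the main obstacle. For $\l=\g$ (the case of (iii),(iv)) the upper bound is clean: the top vector $z=X^{p-1}\del$ sits in $\tau$-degree $4-2p$ with $\ad(e)^{p-1}z\neq0$, so $W$ realises a length-$p$ string reaching down to degree $4-2p$, which must fit inside the principal grading of $\g$ (degrees in $[-2(h-1),2(h-1)]$), giving $p\le h+1$. The subtle points are the general Levi case, the lower bound, and the exact dichotomy: here the characteristic-$p$ Jordan type of $\ad_\g(e)$ genuinely differs from characteristic zero, and it is the interplay of $e^{[p]}=0$ (all blocks $\le p$) with the presence of a size-$p$ string that pins $p$ to $h$ or $h+1$ and singles out type $A$ for $p=h$ (there the defining module $k[X]/\la X^p\ra$ already supplies a size-$(n{+}1)$ block when $p=h=n+1$). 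I expect to establish this not uniformly but by running through the regular nilpotent orbits of each simple type that can occur as $[\l,\l]$ inside an exceptional $\g$, reading Coxeter numbers and the nilpotency order of $\ad e$ from the standard tables, and checking in each case for which good $p$ both constraints hold.

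For the remaining assertions I would argue as follows. Uniqueness (iii): when $e$ is regular in $\g$, $C_\g(e)$ is abelian of dimension $\rk\g$, the $\sl_2$-triple through $e$ is unique up to $C_G(e)$-conjugacy, and once $e$ and a compatible toral $h_0$ are fixed the graded lines of $W$ are rigidly determined; I would package this as vanishing of the relevant $\opH^1$-obstruction to obtain a single conjugacy class. Non-maximality (v): if $e$ is not regular in $\g$ then $\l$ is a proper Levi, $W$ lies in a proper parabolic $\p\supseteq\l$, and $W$ normalises the nonzero abelian centre $\mathfrak{a}$ of the nilradical of $\p$, so $W\subseteq N_\g(\mathfrak{a})\subsetneq\g$. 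Maximality (iv): for $e$ regular in $\g$ one shows $W$ lies in no proper subalgebra, excluding proper parabolics (the top graded vector of $W$ cannot be pushed into $\p$, using the weight data of (ii)) and proper reductive subalgebras (rank and orbit incompatibility with a regular $e$); the single exception is $E_6$ at $p=13$, where $h(E_6)=h(F_4)=12$ forces the principal nilpotent of the subalgebra $F_4\subset E_6$ to be regular in $E_6$, so the regular $W_1$ is absorbed into $F_4$. The converse is the reverse construction: starting from $e$ regular in $\l$ satisfying (i),(ii), I build the principal $\sl_2$ and adjoin a degree-$(4-2p)$ generator $z$ inside $\g$ to assemble a restricted copy of $W_1$ with $\del\mapsto e$, existence of $z$ being guaranteed precisely by the numerics of (ii).
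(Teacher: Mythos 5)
Your proposal reproduces the paper's skeleton in places (the toral element $X\del$, the grading from an associated cocharacter, the congruence $2-2i$ for the weights of the $X^i\del$, the depth bound giving $p\le h+1$, the $F_4\subset E_6$ explanation of the $E_6$ exception), but it has genuine gaps at the load-bearing steps. First, your assertion that the images of the $X^i\del$ lie in \emph{honest} $\tau$-eigenspaces of degree $2-2i$ is false: since $X\del$ is only a toral element, its eigenvalues live in $\F_p$, so the image of $X^{p-1}\del$ a priori lies in $\g(-2p+4)\oplus\g(4)$, and forcing it into $\g(-2p+4)$ requires the explicit bracket relations computed in the proof of Lemma \ref{uniqueconjandmax} -- this is precisely the content of the uniqueness argument you gloss as ``rigidly determined''. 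Moreover, your normalisation $\mathrm d\tau(1)=H$ cannot simply be imposed: one must first lift the toral element to a torus of $G$ (smoothness of $N_G(\la e\ra)$, Lemma \ref{findingtorus}), conjugate it into $\g(0)$ by an element of $R_e$, and then one only gets $X\del=H+H_0$ with $H_0\in\g_e(0)\cap\im\ad e$ (Proposition \ref{findingcoch}); by Lemma \ref{adecapge} the correction $H_0$ is genuinely nonzero exactly for the orbits with an $A_{p-1}$ factor, where a free parameter $\lambda\in\F_p$ survives and the whole Appendix analysis is needed. Second, and most seriously for (i)--(ii): an $\sl_2$-triple in degrees $2,0,-2$ exists for \emph{every} nilpotent in good characteristic and does not identify $e$ as regular in its Levi. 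Distinguished non-regular orbits such as $F_4(a_2)$ at $p=7$ (and $D_4(a_1)$, $E_7(a_5)$, $E_8(a_7)$, \dots) satisfy all of your soft constraints, including $e^{[p]}=0$ and the depth condition; the paper eliminates them only by the refined check that $e\notin\ad(e)^{p-1}\g(-2p+4)$, carried out orbit by orbit over Table \ref{T:reduceCases} (Lemma \ref{reduceCases}). Your irreducibility argument also fails: a splitting $\l'=\l_1\oplus\l_2$ induces no splitting of $W$, because $W\not\subseteq\l$ (its negative-degree elements leave $\l$), so the ``single Jordan block'' does not decompose; reducible Levi types like $A_4A_1$ and $D_4A_1$ are again killed only by the same explicit computation.

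Your argument for (v) is wrong in approach, not merely incomplete. There is no reason $W$ lies in a proper parabolic $\p\supseteq\l$: $W$ is generated by $e$ together with elements in negative $\tau$-degrees down to $4-2p$, which lie outside $\p=\g(\ge 0)$, and appealing to some other parabolic is precisely the Borel--Tits-style reflex the paper warns is unavailable for modular Lie algebras. The actual proof of (v) (Lemma \ref{notreg=notmax}) works on the module $\g|W$: the $\Ext$ bound of Lemma \ref{cfbound} against the composition factors in Table \ref{tComp} produces a $W$-fixed vector in most cases, self-duality plus $\Ext^1$ arguments handle $(E_7,11,D_6)$ and $(E_8,19,E_7)$, and eleven residual cases require the intricate GAP analysis of the Appendix -- where, notably, certain $W$'s with $e$ of type $A_6$ fix \emph{no} nonzero vector and one must construct by hand a $6$- or $7$-dimensional abelian subalgebra normalised by $W$. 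This shows your parabolic shortcut cannot succeed as stated. Similarly, (iv) needs more than ``rank and orbit incompatibility'': the paper verifies maximality by checking $\la W,v\ra=\g$ for each candidate generator $v$ of a minimal $W$-submodule, using that the factors in Table \ref{tComp} are pairwise distinct; and the converse requires either the representation-theoretic embeddings of Lemma \ref{naturalreps} (for classical $\l'$, where $\del$ acts with a single Jordan block on the natural module) or explicit verification of all commutator and $p$-th power relations of $W_1$ on the candidate basis $\{(\ad e)^if\}$ -- existence of a vector in the right degree alone does not make that span a subalgebra.
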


\begin{remarks}(i) In the statement of the theorem, recall that since $\g=\Lie(G)$ we have that $\g$ inherits a restricted structure, leading to a $p$-map; we say a subalgebra is a $p$-subalgebra of $\g$ if it is closed under this map. Now, since any subalgebra is an ideal in its $p$-closure and $\g$ is simple, all maximal subalgebras really are $p$-subalgebras.

(ii) If the nilpotent element $e$ is regular in a proper Levi subalgebra of $\g$ there can be many conjugacy classes of subalgebras of type $W_1$ containing $e$, in particular, those which are non-$G$-cr in the  sense of \cite{BMRT13}.

(iii) The reader is invited to notice the pleasant fact that a subalgebra isomorphic to $W_1$ is maximal only if $p|\dim \g$. \end{remarks}

In some sense it is an artefact of the large dimensions of non-classical simple Lie algebras in good characteristic that they cannot fit inside the exceptional Lie algebras. For example, the Melikyan algebras only exist when the characteristic of $k$ is $5$ and the smallest one is $125$-dimensional. Thus it cannot fit inside $G_2$, $F_4$ or $E_6$ and $p=5$ is not a good prime for $E_8$. So it remains to rule out the existence of a $125$-dimensional simple Lie algebra in $E_7$ of dimension just $133$, which is not too hard: see Lemma \ref{nomelikyanorcontact}. There could be more scope for finding, say the first Hamiltonian algebra of dimension $p^2-2$, but in fact we show that this never appears as a subalgebra of $\g$.

\begin{theorem}\label{onlyW1s}
Let $\g$ be a simple classical Lie algebra of exceptional type.
Suppose $p$ is a good prime for $\g$ and let $\h$ be a simple subalgebra of $\g$. Then $\h$ is either isomorphic to $W_1$ or it is of classical type.\end{theorem}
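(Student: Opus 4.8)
The plan is to combine the Premet--Strade classification with a few numerical invariants, reducing to a short list of borderline pairs $(\h,p)$ that are then eliminated directly or by the cited lemma. Since $p$ is good for $\g$ we have $p\ge 5$, and $p\ge 7$ in type $E_8$; in particular $p>3$, so by \cite{PS06} the simple subalgebra $\h$ is classical, of Cartan type ($W$, $S$, $H$ or $K$, possibly a filtered deformation in types $H$ and $S$), or $p=5$ and $\h$ is a Melikyan algebra. It remains to rule out every non-classical $\h\not\cong W_1=W(1;1)$.

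The primary filter is dimension, as anticipated before the statement. The minimal members have $\dim W(1;1)=p$, $\dim H(2;\underline 1)^{(2)}=p^2-2$, $\dim W(1;2)=p^2$ and $\dim W(2;\underline 1)=2p^2$, whereas the smallest special, contact and Melikyan algebras have dimension of order $p^3$ (for instance $\dim K(3;\underline 1)=p^3$, and the smallest Melikyan algebra is $125$-dimensional). Comparing with $\dim\g\le 248$ across the relevant good primes eliminates outright every algebra of type $S$, every contact algebra except $K(3;\underline 1)$ in $E_7$ at $p=5$, every Melikyan algebra except in $E_7$ at $p=5$, and all of $W$ and $H$ beyond a short list of members of dimension of order $p^2$.

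For the Witt algebras a sharper invariant closes most of the gap. In the standard grading $W(1;n)=\bigoplus_{i=-1}^{p^n-2}W(1;n)_i$, the degree $-1$ generator $\del$ is a nilpotent element of $\g$ (it is $[p]$-nilpotent in its restricted closure, as in the setup of Theorem \ref{necForW1s}) whose adjoint action already on $\h$ has nilpotency index $p^n$; hence $\ad\del$ has nilpotency index at least $p^n$ on $\g$. Since for any nilpotent $x\in\g$ the index of $\ad x$ is at most $2h(\g)-1$, where $h(\g)$ is the Coxeter number and the maximum is attained by a regular nilpotent, we get $p^n\le 2h(\g)-1\le 59$. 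This forces $n=1$ except for $W(1;2)$ in $E_7$ at $p=5$ and in $E_8$ at $p=7$, so $W(1;n)$ with $n\ge 2$ is all but removed.

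What survives is the delicate residue, where I expect the real difficulty. The Melikyan algebra and $K(3;\underline 1)$ in $E_7$ at $p=5$ are handled by Lemma \ref{nomelikyanorcontact}. The remaining candidates are the small Hamiltonian and Witt algebras at small primes --- foremost the first Hamiltonian algebra $H(2;\underline 1)^{(2)}$ of dimension $p^2-2$, together with $W(2;\underline 1)$ and the two surviving cases of $W(1;2)$ --- for which the crude bounds fail, the grading being short (nilpotency index of order $p$) while the dimension is of order $p^2$. Here I would pass to the $p$-envelope $\h_p\subseteq\g$, fix a maximal torus $\t_\h\subseteq\h_p$, conjugate it into a maximal torus of $\g$, and play the rigid weight-space structure of the Cartan-type algebra against the exceptional root datum: the $\t_\h$-weights on $\g$ must be restrictions of roots of $\g$, whereas for $H(2;\underline 1)^{(2)}$ one has the two commuting nilpotents spanning $H_{-1}$ and a copy of $\sl_2\cong\sp_2$ in degree $0$, a configuration I would aim to show cannot be realised. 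Ruling out the first Hamiltonian algebra uniformly --- exactly the case flagged before the statement --- is the step I expect to be the main obstacle.
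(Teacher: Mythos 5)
Your opening reductions are consistent with the paper's: the Premet--Strade classification, the dimensional filter (with the Melikyan algebra and $K(3;\underline 1)$ in $E_7$, $p=5$ disposed of by Lemma \ref{nomelikyanorcontact}), and a nilpotency-index argument for the Witt algebras. That last argument is valid in outline (modulo the point that you must first know $\del$ is a genuinely nilpotent element of $\g$, which in the paper comes out of the reduction to the minimal $p$-envelope), but your bound $p^n\le 2h(\g)-1$ is weaker than the paper's: the paper notes that $e\in\g(2)$ must lie in the image of $(\ad e)^{p^2-1}$ while the cocharacter grading is supported in degrees $\ge -(2h-2)$, giving $p^2-1\le h+1$, which kills \emph{every} case of $W(1;2)$ at once. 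Your version leaves $W(1;2)$ alive in $E_7$ at $p=5$ and $E_8$ at $p=7$, and you never return to eliminate these.

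The genuine gap, however, is the one you flag yourself: you have no argument for $H(2;(1,1))^{(2)}$, for the filtered Hamiltonians $H(2;(1,1);\Phi(\tau))^{(1)}$ and $H(2;(1,1);\Phi(1))$ (which your list of survivors does not even name explicitly), for $W(2;(1,1))$, or for the two leftover $W(1;2)$ cases --- and this is precisely where the theorem's difficulty lies. The paper's treatment of these is not a torus-weight comparison at all; it rests on the full machinery built for Theorem \ref{necForW1s}. Each of these Hamiltonians contains an explicit $p$-subalgebra $W\cong W_1$ (Lemmas \ref{h2repsrest} and \ref{weirdh2repsrest}, via the classification of their restricted simple modules in Lemmas \ref{h2reps} and \ref{weirdh2reps}), and every restricted representation restricted to that $W$ has the property that the factors $L(r)$, $1\le r\le p-2$, all occur with equal multiplicity; comparing with the exhaustive lists of composition factors of $\g$ restricted to \emph{every} $W_1$ $p$-subalgebra of $\g$ (Tables \ref{tComp} and \ref{t:ptComp}, the latter requiring the GAP computations of Appendix \ref{appendix}) shows no such $W_1\subseteq\g$ exists, a contradiction; $W_2$ then follows since $H(2;(1,1))^{(2)}\le W_2$. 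Your proposed substitute --- conjugating a maximal torus of $\h_p$ into a maximal torus of $\g$ and playing weights against roots --- is unlikely to succeed as stated: $H(2;(1,1))^{(2)}$ has maximal tori of dimension one (it is one of the three simple restricted Lie algebras of toral rank one, alongside $\sl_2$ and $W_1$), so the decomposition of $\g$ it induces is into at most $p$ eigenspaces with eigenvalues in $\F_p$, far too coarse an invariant to force a contradiction. You also omit the paper's inductive reduction (using the radical of $\h_p$ and parabolic subalgebras of $\g$) showing $\h_p$ may be taken to be semisimple, hence a minimal $p$-envelope; this step is what legitimises both the nilpotency claims in the Witt case and the use of restricted representation theory in the Hamiltonian case.
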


\begin{remark}The conclusion of Theorem \ref{onlyW1s} does not extend to bad characteristic. Alex Kubiesa, an undergraduate student of the second author, has discovered a maximal simple subalgebra of $F_4$ over $\mathbb F_3$ of dimension $26$. (And again, we have $26|\dim\g$.) There is strong evidence that this subalgebra is not isomorphic to the first contact algebra $K(3,[1,1,1])$. Of those known in characteristic $3$, it also matches the dimension of an Ermolaev algebra, see \cite[4.4]{Str04}.\end{remark}

\subsection*{Acknowledgements} We would like to thank Alexander Premet for a close reading of this paper and help with references on the non-graded Hamiltonians. We would also like to thank Dan Nakano for helpful discussions on the representations of Lie algebras of Cartan type.
\section{Preliminaries}
\subsection{Notation}
In the following $G$ will be a simple algebraic group of exceptional type over an algebraically closed field $k$ of characteristic $p$,
and $\g=\Lie(G)$ will be its Lie algebra.
We assume that $p$ is a good prime for the root system of $G$.

Fix a maximal torus $T$ and a Borel subgroup $B$ containing it and let $\Phi$ be the root system of $G$ corresponding to $T$, with positive roots $\Phi^+$ corresponding to $B$. If $S=\{\alpha_i\}$ represents the simple roots, one can express all other roots simply by giving the coefficients of the simple roots. We will use the Bourbaki ordering for this; hence the highest root of $F_4$, $2\alpha_1+3\alpha_2+4\alpha_3+2\alpha_4$ is written as $2342$ and the root $\alpha_2+\alpha_4$ in $E_6$ is written as $\begin{smallmatrix}0&0&1&0&0\\&&1&&\end{smallmatrix}$. 
We choose root vectors for $T$ in $\g$ and a basis for $\t=\Lie(T)$ coming from a basis of subalgebras isomorphic to $\sl_2$ corresponding to each of the simple roots. We write these elements as $\{e_\alpha:\alpha\in\Phi\}$ and $\{h_{\alpha}:\alpha\in S\}$ respectively.

\subsection{Nilpotent orbits}
We work extensively with nilpotent orbits in good characteristic. Our main source for the theory is \cite{Jan04}.
Let us recall the following facts from this reference, which we will usually use without comment. Associated to each nilpotent element $e\in \g$ is an orbit $\OO=G.e$ of $e$ under the adjoint action of $G$ on $\g$. We have $\dim G=\dim \OO+\dim G_e$ where $G_e$ is the centraliser of $e$ in $\g$. Since $p$ is a (very) good prime for $\Phi$, centralisers are smooth, and so $\dim G_e=\dim \g_e$; and $\Lie(G_e)=\g_e$. 
The nilpotent element $e$ is said to be \emph{distinguished} in some Levi subalgebra $\l=\Lie(L)$ of $\g$ if each torus in $L$ centralising $e$ is contained in $Z(L)$. Every nilpotent element is distinguished in at least one Levi subalgebra.
It is a result of Premet \cite{Pre95} that there is at least one cocharacter $\tau:\Gm\to G$ \emph{associated} to $e$. The cocharacter $\tau$ has the following properties:
firstly, $e$ is in the $2$-weight space for $\tau$, so that $\tau(t).e=t^2e$;
secondly $\tau$ evaluates in the derived subgroup $\D(L)$ of $L$,
where $L$ is a Levi subgroup with the property that $e$ is
distinguished in $\l = \Lie(L)$.
Any two associated cocharacters (with these properties) are conjugate by an element of $G_e$. Any cocharacter gives a grading of $\g=\bigoplus_{i\in \Z}\g(i)$ where $\g(i)$ is the $i$th weight space of $\tau$ on $\g$. One has $[\g(i),\g(j)]\subseteq \g(i+j)$. If $\tau$ is associated to $e$, one has $e\in\g(2)$ and $\g(\geq i):=\bigoplus_{i\geq 0}\g(i)$ is a parabolic subalgebra $\p=\Lie(P)$ of $\g$ with $\g(0)$ being a Levi subalgebra, and $\g(>0):=\bigoplus_{i>0}\g(i)$ is its nilradical, being $\g(>0)=\Lie(R_u(P))$. The $\tau$-grading on $\g$ induces a grading on the centraliser $\g_e$. One may write $G_e$ as a semidirect product $C_eR_e$ with $C_e$ reductive and $R_e$ its unipotent radical. In this case, one has $\g_e(0)=\Lie(C_e)$ and $\g_e(>0)=\Lie(R_e)$.

The classification of nilpotent orbits is now well-established; for detailed data in the case that $\g$ is exceptional and $p$ is good for $\g$, we are very grateful for the existence of \cite{LT11}, which gives complete tables of orbit representatives, associated cocharacters, and the explicit structure of $C_e$; that is, its root system, in terms of the roots of $G$ and $Z(C_e)^\circ$ in terms of the maximal torus $T$ of $G$. Furthermore the authors give the component group $C_e/C_e^\circ$ and the structure of $R_e$ in terms of modules for $C_e$. Since the calculation is used at one point, let us, by way of example of its usefulness, point out here that one can from such data read off the maximal value of $i$ for which $\g(i)\neq 0$. Let $i$ be this value: then since $[e,\g(i)]\subseteq \g(i+2)=0$, $\g(i)\subseteq\g_e(>0)$; thus $\g(i)$ is a $C_e$-module explicitly listed in \cite{LT11}.

\subsection{Representations of $W_1$} \label{repsofW1}
We assume that $p\geq 5$ in this section.
Let us recall some of the representation theory of $W_1$.
Recall that the Witt algebra $W_1$ can be given by a basis $\{\del,X\del,\dots,X^{p-1}\del\}$ with commutator formula
$[X^i\del,X^j\del]=(i-j)X^{i+j-1}\del$.
The simple modules for $W_1$ were determined in \cite{Cha41} and can be  obtained as quotients of Verma modules.
In this paper we are interested in $p$-representations, i.e.\ those
associated to the trivial central character in the universal enveloping algebra.
The relevant Verma modules are parametrised by the integers $\lambda$ from $0$ to $p-1$.
To describe these, let $\n^+=\la X^2\del,\dots,X^{p-1}\del\ra$ and $\b^+=\la X\del,\dots,X^{p-1}\del\ra$.
Then as $\n^+$ is an ideal of $\b^+$ we may define a $1$-dimensional $\b^+$-module $k_\lambda$ on which $\n^+$ acts 
trivially and $X\del$ acts by multiplication by $\lambda$.
Then one defines the corresponding Verma module $Z^+(\lambda)=\u(W_1)\otimes_{\u(\b^+)}k_\lambda$.
It is easy to see that $Z^+(\lambda)$ is $p$-dimensional with basis $\{m_0,\dots,m_{p-1}\}$
and the action of $e_k=X^{k+1}\del$ ($-1 \leq k \leq p-2$) is given by
\[e_k.m_j=(j+k+1+(k+1)\lambda)m_{j+k},\tag{*}\]
where we put $m_j = 0$ for $j$ outside $\{0,\dots,p-1\}$.
The $Z^+(\lambda)$ are all simple, except for $Z^+(0)$ and $Z^+(p-1)$.
The former has a trivial simple quotient and the latter has a trivial submodule and $p-1$-dimensional simple quotient.
We denote the corresponding simple quotient modules by $L(\lambda)$.

One way one may recognise the high weight of a simple module is the following:
In each $L=L(\lambda)$ there is, up to scalars, a unique vector, $m_0$ killed by $\del=e_{-1}$.
By formula (*) $X\del.m_0=e_0.m_0=(\lambda+1)m_0$ whenever $L(\lambda)$ is a Verma module. In the remaining two cases, one checks that for $L(p-1)$, $X\del$ has weight $1$ on a vector killed by $\del$ and of course $X\del$ has weight zero on the trivial module $L(0)$. Thus the action of $\del$ and $X\del$ on $L$ determine $L$ up to isomorphism. In particular, we may identify the adjoint module as $L(p-2)$: $\del$ is killed by $\ad\del$, and $[X\del,\del]=-\del$. Thus $\lambda+1=-1$ modulo $p$ and so $\lambda=p-2$.

If $V$ is a finite dimensional $W_1$-module, with the weights of $V$ known, and sufficently many maximal or primitive vectors for $\del$ are known, it is possible to list the composition factors of $V$. This works particularly in the case that $V$ is compatibly graded with $\del$ in grade $2$ and $X\del$ in grade $0$:

\begin{lemma} \label{lem:grade}
Suppose $V$ is a $W=W_1$-module admitting a grading $V=\bigoplus_{i\in\Z} V(i)$ such that $\del\cdot V(i)\subset V(i+2)$ and such that each $V(i)$ is stable under $X\del$. Then there exists a unique semisimple $W$-module $V_s=V_1\oplus V_2\oplus \dots \oplus V_r$ with $V_s=\bigoplus_{i\in\Z} V_s(i)$ with $V_s(i)=V(i)$ as $X\del$-modules and each $V_i$ is graded. 

For this module $V_s$, the set of composition factors $[V|W]$ and $[V_s|W]$ coincide.\end{lemma}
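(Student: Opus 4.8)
The plan is to combine a graded Jordan--Hölder argument (for existence and the composition-factor statement) with a downward induction on grades (for uniqueness), the whole argument resting on the elementary representation theory of $W=W_1$ recalled above. Throughout I work in the restricted setting, so that the toral element $X\del$ (which satisfies $(X\del)^{[p]}=X\del$) acts semisimply on $V$ with eigenvalues in $\F_p$; hence each $V(i)$ splits as a direct sum of $X\del$-weight spaces $V(i)_\mu$, $\mu\in\F_p$, and ``$V_s(i)=V(i)$ as $X\del$-modules'' just means $\dim V_s(i)_\mu=\dim V(i)_\mu$ for all $i,\mu$. The single input I would isolate first is a recognition principle: by formula (*) and the discussion preceding the lemma, every simple $W$-module $L$ contains a unique line killed by $\del$, this line lies in the top graded piece of any compatible grading on $L$ (in which $L$ is one-dimensional), and the assignment sending $L$ to the $X\del$-weight of that line is a bijection from $\{L(0),\dots,L(p-1)\}$ onto $\F_p$ (concretely $L(0)\mapsto 0$, $L(p-1)\mapsto 1$, and $L(\lambda)\mapsto\lambda+1$ for $1\le\lambda\le p-2$).

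For existence and the composition-factor statement I would show that $V$ admits a filtration by graded submodules whose successive quotients are graded simple; it suffices to produce one graded simple submodule in any nonzero graded $V$ and then induct on $\dim V$. Let $i_1$ be the lowest grade with $V(i_1)\neq 0$ and pick an $X\del$-weight vector $w\in V(i_1)$. Since $e_k=X^{k+1}\del$ lowers the grade by $2k$ for $k\ge 1$, it sends $V(i_1)$ into $V(i_1-2k)=0$; thus $w$ is annihilated by $\n^+$ and is a highest weight vector. Consequently $M=\u(W)\,w=\la\del^j w\ra$ is a homogeneous cyclic quotient of some Verma module $Z^+(\mu)$, of length at most two. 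If $M$ is simple we are done; otherwise $M\cong Z^+(0)$ or $Z^+(p-1)$, and a direct inspection of (*) shows that the unique proper submodule is spanned by a subset of the basis $\{m_j\}$ (for instance $\la m_0\ra$ when $\mu=p-1$), hence is homogeneous and graded simple. This yields the graded filtration. Taking the associated graded module $V_s=\gr V$ produces a graded semisimple module; because $X\del$ acts semisimply, passing to $\gr$ does not change the isomorphism type of any $V(i)$ as an $X\del$-module, so $V_s(i)=V(i)$ as $X\del$-modules, and by Jordan--Hölder $[V_s\,|\,W]=[V\,|\,W]$.

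Uniqueness I would obtain by downward induction on the top grade $i_0$ of $V$. Let $V_s$ be any graded semisimple module with $V_s(i)=V(i)$ as $X\del$-modules. Each graded simple summand occupies a run of graded pieces ending at its top grade, in which top piece it is one-dimensional; since $i_0$ is maximal, the summands contributing to $V_s(i_0)$ are exactly those whose top grade equals $i_0$, each contributing its $\del$-primitive line. By the recognition principle the $X\del$-weight of that line determines the isomorphism type of the summand, so the multiset of weights occurring in $V_s(i_0)=V(i_0)$ determines the multiset of graded simple summands topping at $i_0$. Removing their (now completely determined) graded-weight profiles from the data $\{\dim V(i)_\mu\}$ leaves the profile of a graded semisimple module with strictly smaller top grade, to which the inductive hypothesis applies. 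Hence the full multiset of graded simple summands, and so $V_s$ itself, is uniquely determined.

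The main obstacle is the interaction between the grading and the non-semisimplicity of the two exceptional Verma modules $Z^+(0)$ and $Z^+(p-1)$: one must be sure that when a bottom-grade highest weight vector fails to generate a simple module, the resulting length-two module still contains a graded simple submodule to peel off. This is exactly what the explicit computation from (*) above guarantees; alternatively one can argue once and for all that, since the compatible grading makes $V$ into a module carrying a rational $\Gm$-action that intertwines the (correspondingly graded) action of $W$, all canonical submodules such as the socle are $\Gm$-stable and hence graded. The remaining point requiring care --- that $X\del$ acts semisimply, so that the per-grade comparison ``as $X\del$-modules'' is meaningful and is preserved under $\gr$ --- is automatic in the restricted setting in which the lemma is applied.
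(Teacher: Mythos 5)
Your existence argument breaks at its very first step: you assert that $e_k=X^{k+1}\del$ lowers the grade by $2k$, so that an $X\del$-weight vector in the lowest graded piece is killed by $\n^+$. But the lemma's hypothesis constrains only $\del$ and $X\del$ relative to the grading; nothing whatsoever is assumed about how $X^2\del,\dots,X^{p-1}\del$ interact with it, and this weakness is essential for the paper's applications (when the grading on $\g$ comes from a cocharacter $\tau$ associated to $e=\del$ with $X\del=-\tfrac12 d\tau(1)$, the element $X^{k+1}\del$ is only known to lie in $\bigoplus_{i\equiv -2k\ (\mathrm{mod}\ p)}\g(i)$, so $W$ is typically \emph{not} a graded subalgebra). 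Moreover your claim is genuinely false under the stated hypotheses, not merely unjustified. Take $p=5$ and $V=Z^+(3)\oplus Z^+(1)=L(3)\oplus L(1)$ with bases $\{m_j\}$, $\{m'_j\}$ as in (*), and set $u_0=m_0+m'_2$, $u_1=3m_1+m'_3$, $u_2=m_2+m'_4$ (these are $X\del$-weight vectors of weights $4,0,1$). Grade $V$ by
\[V(6)=\la m_0\ra,\quad V(4)=\la m_1,m'_0\ra,\quad V(2)=\la m_2,m'_1\ra,\quad V(0)=\la m_3,u_0\ra,\quad V(-2)=\la m_4,u_1\ra,\quad V(-4)=\la u_2\ra.\]
Formula (*) gives $\del u_2=4u_1$, $\del u_1=3u_0$, $\del u_0=2m'_1$, $\del m_j=jm_{j-1}$, so $\del\cdot V(i)\subseteq V(i+2)$ and each piece is $X\del$-stable: the hypotheses hold. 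Yet the lowest piece is spanned by $u_2$ and $X^3\del\cdot u_2=4m_4\neq 0$; worse, $\u(W)u_2=V$ (its projections to the two non-isomorphic simple summands are nonzero), so the cyclic module generated by your lowest-grade vector is neither a Verma quotient nor graded, and the graded filtration and $\gr V$ construction never get started. Your fallback remark about a rational $\Gm$-action fails for the same reason: the $\Gm$-action defined by the grading intertwines only the actions of $\del$ and $X\del$, not of $W$, and indeed in this example the canonical $L(1)$-isotypic component $\la m'_0,\dots,m'_4\ra$ is \emph{not} a graded subspace.

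This is exactly the difficulty the paper's proof is built around, and it avoids $\n^+$ entirely, using only data intrinsic to the module: it picks an arbitrary irreducible submodule $V_1\cong L(\lambda)$, takes the vector $v$ spanning its $\del$-kernel (an $X\del$-weight vector which in general is \emph{not} homogeneous), and then \emph{regrades} $V$ --- replacing the top homogeneous component of $v$ by $v$ itself and continuing down the string $w,\del w,\dots$ --- so that $V_1$ becomes a graded submodule while no graded piece changes as an $X\del$-module; induction applied to the graded quotient $V'/V_1$ then delivers existence, uniqueness and the equality of composition factors simultaneously. Your uniqueness argument (recognising a graded simple summand by the $X\del$-weight of its top line, then downward induction on the top grade) is sound and is in essence the paper's algorithm in Proposition \ref{wCompAlg}, but it presupposes the existence of the graded semisimple decomposition, which is precisely the part your proof does not establish.
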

\begin{proof}If $V$ is irreducible then we simply take $V_s=V$ and $V_s(i)=V(i)$. Moreover there is only one choice for $V_s$ as a $W$-module since both $V$ and $V_s$ are determined by the maximum $r$ with $V(r)\neq 0$ and the weight of $X\del$ on this necessarily $1$-dimensional space.

Now take an irreducible submodule $V_1\cong L(\lambda)$ of $V$. Again, $V_1$ is determined uniquely as a $W$-module by the weight of $X\del$ on a vector killed by $\del$. Suppose this vector is $v=v_1+v_2+\dots+v_r$ with $v_j\in V(i_j)$, $i_1>i_2>\dots>i_r$. We may write $V_1=\la w=v\ra$ if  $\lambda=0$, $V_1=\la w,\del w, \dots, \del^{p-2}w=v\ra$ if $\lambda=p-1$ or $V_1=\la w,\del w, \dots, \del^{p-1}w=v\ra$ if $1\leq \lambda\leq p-2$. 

Now we regrade $V$. Let $V'=V$ as a $W$-module. Set $V(i)=V'(i)$ for $i>i_1$. If $\{v_1,x_2,\dots,x_m\}$ is a basis for $V(i_1)$ then let $V'(i_1)$ be spanned by $\{v,x_2,\dots,x_m\}$. Since $v$ is a weight vector for $X\del$ and $\del v=0\in V(i_1+2)$, for $i\geq i_1$, the new grading $V'(i)$ still satisfies the hypothesis of the lemma. Continuing inductively, if $\del^sw$ is in grade $V(t)$ for some $s\geq 0$ and some $t\in\mathbb Z$ we have $\del^{s-1}w$ a weight vector for $X\del$ with $\del\in V'(t)$ so we may grade $V'$ such that $V'(t-2)$ is spanned by $\del^{s-1}w$ together with some other vectors. In this new grading, we have $V_1$ is a graded submodule of $V'$.

Hence the quotient $V'/V_1$ is also graded, say $(V'/V_1)=\bigoplus (V'/V_1)(i)$. By induction, there exists a unique module $(V'/V_1)_s$ which is semisimple, with grading $(V'/V_1)_s=\bigoplus (V'/V_1)_s(i)$ satisfying $(V'/V_1)_s(i)=(V'/V_1)(i)$ as $X\del$-modules and with a decomposition into graded irreducibles and such that the $W$-composition factors of $V'/V_1$ are the same as those of $(V'/V_1)_s$. We therefore set $V_s=(V'/V_1)_s\oplus V_1$, with the direct factor $V_1$ graded as it is in $V'$. Moreover since the highest weight space of $V_1$ is determined by the top $i$ such that $V_1(i)\neq 0$ together with the weight of $X\del$ on this space, this is the unique choice of $V_s$ for which $V_s(i)=V(i)$ as $X\del$-modules for all $i$.\end{proof}

If we are in the situation of the lemma, we give an algorithm which produces the composition factors of $V$ given just the restriction of $V(i)$ to $X\del$, i.e. a list of $X\del$-weights $\ell_i$ of $V(i)$ for each $i\in\Z$. By the lemma, we may assume that $V=V_s$ satisfying the conclusions of the lemma.

\begin{prop}
Let $V$ be as in Lemma \ref{lem:grade}. For $i \in \Z$ with
$V(i) \neq 0$, let $\ell_i$ be a list (with multiplicities) of the $X\del$-weights
on $V(i)$. Then the following algorithm determines the composition
factors (with multiplicities) of $V$ as a $W$-module:

\begin{algorithm*}\label{wCompAlg}
\begin{enumerate}
\item Let $r \in \Z$ be maximal such that $\ell_r$ is nonempty. Pick $\mu \in \ell_r$.
\item Record a composition factor $U=L(\lambda)$ for $\lambda=\mu-1$ if $\mu\neq 0,1$ and $U=L(p-1)$, $L(0)$ if $\mu=1, 0$ respectively. Form a new set of lists $\{\ell'_r\}$ by removing weights from $\{\ell_r\}$ in the following way: If $U=L(0)$ remove a $0$-weight from $\ell_r$, if $U=L(p-1)$ remove one weight $1,2,\dots p-1$ from $\ell_r,\ell_{r-2},\dots,\ell_{r-2p+4}$ respectively and otherwise remove one weight $\mu,\mu+1,\dots,\mu+p-1$ from $\ell_r,\ell_{r-2},\dots,\ell_{r-2p+2}$. 
\item If the new lists $\{ \ell'_r \}$ are not all empty, repeat from Step (i). 
\end{enumerate}\end{algorithm*}
\end{prop}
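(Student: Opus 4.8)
The plan is to reduce immediately to the semisimple case using Lemma \ref{lem:grade}: replacing $V$ by $V_s$ changes neither the composition factors nor the graded $X\del$-weight lists $\ell_i$, so I may assume $V=\bigoplus_j U_j$ is a direct sum of graded irreducibles $U_j\cong L(\lambda_j)$, and the multiset I am trying to recover is exactly $\{U_j\}$. The whole argument is then an induction on $\dim V$ (equivalently, on the total number of weights listed), in which each complete pass through Step (ii) peels off exactly one summand.

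First I would pin down the graded $X\del$-weight pattern of a single graded copy of $L(\lambda)$ from formula (*), taking $\del=e_{-1}$ and $X\del=e_0$. One reads off that $\del$ lowers the Verma index $j$ by one while raising the grade by $2$, and that $X\del$ acts on $m_j$ with weight $\lambda+1+j \pmod p$. Hence, writing $r$ for the top grade and $\mu=\lambda+1$: for $1\leq\lambda\leq p-2$ the weights are $\mu,\mu+1,\dots,\mu+p-1$ occurring in grades $r,r-2,\dots,r-2p+2$; for $L(0)$ there is the single weight $0$ in grade $r$; and for $L(p-1)$, whose Verma cover has the $\del$-fixed weight-$0$ vector $m_0$ as trivial submodule, the surviving weights are $1,2,\dots,p-1$ in grades $r,r-2,\dots,r-2p+4$. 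The crucial point in each case is that $\ker(\del|_{U_j})$ is exactly the one-dimensional top graded space, carrying weight $\mu$, and that the correspondence $\lambda=\mu-1$ (with the two exceptions $\mu=1\mapsto L(p-1)$ and $\mu=0\mapsto L(0)$) is precisely the one justified in Section \ref{repsofW1}. This is exactly the data encoded in Step (ii).

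For the inductive step I would take $r$ maximal with $\ell_r\neq\varnothing$. Since $\del\cdot V(r)\subseteq V(r+2)=0$, all of $V(r)$ is $\del$-primitive; and as $\ker\del=\bigoplus_j\ker(\del|_{U_j})$ is graded with each summand one-dimensional, this forces $V(r)=\bigoplus_{j:\,r_j=r}(\text{top line of }U_j)$, where $r_j$ is the top grade of $U_j$. Thus $\ell_r$ is exactly the multiset of top weights of the summands with top grade $r$, so any chosen $\mu\in\ell_r$ is the top weight of some summand $U_0$, which the correspondence above identifies, justifying the factor recorded in Step (ii). Writing $V=U_0\oplus V'$ with $V'$ again graded semisimple, the weight lists of $V'$ are obtained from $\{\ell_i\}$ by deleting the graded weights of $U_0$; since $U_0$ is a genuine summand each such weight really is present, so the deletion in Step (ii) is well-defined and yields $\{\ell'_i\}$ equal to the weight lists of $V'$. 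By induction the algorithm applied to $\{\ell'_i\}$ returns $\{U_j:j\neq 0\}$, and with $U_0$ this gives $\{U_j\}$; termination is clear since each pass removes at least the weight $\mu$.

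The hard part will be the bookkeeping of the exceptional modules together with the modular arithmetic: one must verify carefully that the grade offsets $r,r-2,\dots$ and the weight shifts $\mu,\mu+1,\dots$ read modulo $p$ in Step (ii) coincide with the graded weight pattern of each $L(\lambda)$, and in particular that the truncated patterns for $L(0)$ and $L(p-1)$ are handled correctly. These are the only places where the ``remove one weight from each of these grades'' recipe deviates from the uniform $p$-term rule, and everything else reduces to the semisimple peeling argument above.
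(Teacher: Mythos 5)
Your proposal is correct and follows essentially the same route as the paper's proof: reduce to the graded semisimple case via Lemma \ref{lem:grade}, observe that $\del$ kills the top graded piece $V(r)$ so the $X\del$-weight there identifies a simple graded summand (with the explicit graded weight patterns of $L(\lambda)$, $L(0)$, $L(p-1)$ matching Step (ii)), then peel that summand off and recurse. The only cosmetic difference is that you split off a direct summand $V = U_0 \oplus V'$ where the paper passes to the quotient $V/V_j$, which amounts to the same thing in the semisimple setting.
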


\begin{proof}
By Lemma \ref{lem:grade}, we may assume that $V = \bigoplus_i V(i)$ is a semisimple $W$-module,
and that it is a direct sum of simple graded $W$-modules $V = \bigoplus_j V_j$.
Since we have $V(r) = \bigoplus_j V_j(r)$,
there exists a simple submodule $V_j$ such that $V_j(r) \neq 0$.
We have that $\del$ kills every element in $V(r)$,
hence the $X\del$-weight $\mu$ on $V_j(r)$ uniquely determines
the simple submodule $U=V_j$ as described in the algorithm,
and $V_j$ is a direct sum of one-dimensional graded
pieces in the positions given in step (ii) of the algorithm. 
Proceeding with $V/V_j$ in place of $V$, we may determine all composition factors
(with multiplicities) of $V$. Moreover, replacing $V$ by $V/V_j$ corresponds
to replacing the weights of $V$ by the weights obtained after a single application
of step (ii) in the algorithm.
\end{proof}

The self-dual simple modules are $L((p-1)/2)$, $L(0)$ and $L(p-1)$. Otherwise one has $L(i)^*=L(p-1-i)$. 

The extensions of the simple modules were determined in \cite{BNW09} and independently, in \cite{Ria11}.
When $p=2$, $W_1$ is no longer simple and when $p=3$, $W_1\cong \sl_2$ for which the answer is well known.

\begin{lemma}[{\cite[Theorem A,B]{BNW09}}]\label{ext1forw1}Let $p=5$. Then

\begin{enumerate}\item $\Ext^1_{W_1}(L(\mu),L(\lambda))\cong k$ if
\begin{enumerate}\item $\lambda-\mu=2,3$ (mod $p$), $1\leq \mu\leq p-2$, $1\leq \lambda\leq p-1$, or
\item $(\mu,\lambda)=(0,1)$, $(p-2,0)$, $(p-1,2)$ or $(p-1,3)$;\end{enumerate}
\item $\Ext^1_{W_1}(L(\mu),L(\lambda))\cong k\oplus k$ if $\{\mu,\lambda\} = \{0,p-1\}$;
\item $\Ext^1_{W_1}(L(\mu),L(\lambda))=0$ otherwise.\end{enumerate}

Let $p\geq 7$. Then 
\begin{enumerate}\item $\Ext^1_{W_1}(L(\mu),L(\lambda))\cong k$ if
\begin{enumerate}\item $\lambda-\mu=2,3,4$ (mod $p$), $1\leq \mu\leq p-2$, $1\leq \lambda\leq p-1$, or
\item $(\mu,\lambda)=(0,1)$, $(p-2,0)$, $(p-1,2)$, $(p-1,3)$ or $(p-1,4)$, or
\item $2\lambda^2-10\lambda+3=0$ (mod $p$), $\lambda-\mu=6$ (mod $p$), $1\leq \mu,\lambda\leq p-2$;\end{enumerate}
\item $\Ext^1_{W_1}(L(\mu),L(\lambda))\cong k\oplus k$ if $\{\mu,\lambda\}= \{0,p-1\}$;
\item $\Ext^1_{W_1}(L(\mu),L(\lambda))=0$ otherwise.\end{enumerate}
\end{lemma}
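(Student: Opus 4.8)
The statement is quoted from \cite{BNW09}; I describe how one would recover it directly, which is essentially the method used there. The first move is to translate extensions into Lie algebra cohomology: $\Ext^1_{W_1}(L(\mu),L(\lambda)) \cong \opH^1(W_1, M)$ with $M := \Hom_k(L(\mu),L(\lambda)) \cong L(\mu)^*\otimes L(\lambda)$. The toral element $h = X\del = e_0$ acts semisimply on $W_1$ and on $M$, giving the usual $\Z$-grading $e_k \in W_1(k)$ and a weight decomposition of $M$ whose weights are differences of the $X\del$-weights recorded in $(*)$. Since the Lie derivative of any element of $W_1$ acts as zero on $\opH^\ast(W_1,-)$ (Cartan's homotopy formula $\theta(x)=\iota_x d + d\iota_x$), the cohomology is concentrated in the $h$-weight-zero component. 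This reduces the computation to finite-dimensional linear algebra parametrised by $\mu$ and $\lambda$.

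Concretely, because $W_1$ is only $p$-dimensional, the Chevalley--Eilenberg complex is already finite, and I would compute $\opH^1=\ker d^1/\im d^0$ from $d^0(m)(x)=x\cdot m$ and $d^1(f)(x,y)=x f(y)-y f(x)-f([x,y])$ on the weight-zero cochains; the weight grading shrinks these spaces further, and since $W_1$ is generated for $p\geq 5$ by $e_{-1}=\del$ and $e_2=X^3\del$, a cocycle is pinned down by very few parameters. Using the bases $\{m_0,\dots,m_{p-1}\}$ of the Verma modules and the explicit action $(*)$, the cocycle and coboundary equations become linear recurrences in $\lambda,\mu$, and the nonvanishing of $\opH^1$ is controlled by when a coefficient matrix drops rank. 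The generic families --- $\lambda-\mu\in\{2,3\}$ for $p=5$ and $\{2,3,4\}$ for $p\geq 7$ --- emerge from the near-diagonal band of this matrix and form the routine part of the analysis.

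For the two simple modules that are not full Verma modules, $L(0)$ and $L(p-1)$, I would instead exploit the nonsplit sequences $0\to L(p-1)\to Z^+(0)\to L(0)\to 0$ and $0\to L(0)\to Z^+(p-1)\to L(p-1)\to 0$ together with the long exact sequences in $\Ext^1(-,L(\lambda))$ and $\Ext^1(L(\mu),-)$; these produce the boundary classes of type (1)(b) and, combined, the exceptional value $\opH^1=k\oplus k$ for $\{\mu,\lambda\}=\{0,p-1\}$. The self-duality of $L((p-1)/2)$, $L(0)$, $L(p-1)$, with $L(i)^\ast=L(p-1-i)$ otherwise, then halves the work by identifying $\Ext^1(L(\mu),L(\lambda))$ with $\Ext^1(L(p-1-\lambda),L(p-1-\mu))$.

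The main obstacle is the sporadic family in (1)(c) for $p\geq 7$: the extension with $\lambda-\mu=6$ that exists precisely when $2\lambda^2-10\lambda+3\equiv 0 \pmod p$. Unlike the generic classes, this does not arise from adjacency in the grading but from a coincidental rank drop at weight difference $6$; I expect it to surface as the vanishing of the determinant obtained on solving the cocycle recurrences symbolically in $\lambda$, with the factor $2\lambda^2-10\lambda+3$ being exactly that determinant. The two delicate points are verifying that the resulting class is genuinely not a coboundary, and proving that no further sporadic classes occur at weight differences $5$ or $\geq 7$ --- that is, that the corresponding symbolic determinants are nonzero units modulo $p$. This exhaustive ruling-out, rather than the construction of any single extension, is where the real work lies.
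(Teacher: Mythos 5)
You are right that this statement is not proved in the paper at all: it is imported verbatim from \cite[Theorems A and B]{BNW09} (with the independent determination in \cite{Ria11} also noted), so there is no internal argument to measure your proposal against. Within that caveat, your sketch is a sensible reconstruction of the kind of computation behind the cited result: the translation $\Ext^1_{W_1}(L(\mu),L(\lambda))\cong \opH^1(W_1,\Hom_k(L(\mu),L(\lambda)))$, the concentration of cohomology in the $X\del$-weight-zero subcomplex, the observation that a $1$-cocycle is determined by its values on the generators $\del$ and $X^3\del$, the treatment of $L(0)$ and $L(p-1)$ via the sequences $0\to L(p-1)\to Z^+(0)\to L(0)\to 0$ and $0\to L(0)\to Z^+(p-1)\to L(p-1)\to 0$, and the halving of cases by duality are all correct steps.

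Judged as a proof, however, there is a genuine gap, and you in effect concede it: every assertion that distinguishes the stated answer from a plausible guess --- that the band is exactly $\lambda-\mu\in\{2,3,4\}$ (resp.\ $\{2,3\}$ for $p=5$), that the exceptional pairs are exactly those in (1)(b), that $\{0,p-1\}$ gives multiplicity two, and above all that the sporadic family exists precisely when $2\lambda^2-10\lambda+3\equiv 0$ with $\lambda-\mu=6$ and that nothing else occurs --- is deferred to an unexecuted linear-algebra computation (``where the real work lies''). Nothing in the sketch would let a reader verify even one entry of the quiver. Two further points need attention if you carry this out. First, the category: the paper works with $p$-representations, so the relevant extensions are of restricted modules, whereas the Chevalley--Eilenberg complex computes ordinary Lie algebra cohomology; for $\mu\neq\lambda$ the two agree, because in Hochschild's exact sequence for restricted cohomology the discrepancy lands in a space of Frobenius-semilinear maps into $\Hom_{W_1}(L(\mu),L(\lambda))=0$, but this comparison must be invoked, and on the diagonal (the self-extension case, which the lemma asserts vanishes) it is exactly where the two theories can differ. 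Second, a raw weight-zero cochain computation on $\Hom_k(L(\mu),L(\lambda))$ for all pairs $(\mu,\lambda)$ is considerably heavier than the way such computations are usually organised --- and, to the best of my reading, how \cite{BNW09} organise theirs --- namely through the subalgebras $\b^+$, $\n^+$ and Frobenius reciprocity for the Verma modules $Z^+(\lambda)$, which reduces each $\Ext$ group to a single weight space of the cohomology of the positive part and is what makes the rank analysis producing the quadratic condition in (1)(c) tractable. In its present form the proposal is a credible plan of attack, not a proof.
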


We record the next easy lemma which will be of use in proving Theorem \ref{necForW1s}.

\begin{lemma}\label{naturalreps}Suppose $p>2$. In each of its non-trivial irreducible $p$-representations, the smallest classical algebraic simple Lie algebra $\h$ containing $W_1$ is $A_{p-1}$ unless $V=L(p-1)$ and $\h=\sp_{p-1}$ or $V=L((p-1)/2)$ and $\h=\so_p$. Furthermore, the element $\del\in W_1$ is represented by a nilpotent regular element of $\h$.\end{lemma}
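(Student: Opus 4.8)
The plan is to read the action of $\del=e_{-1}$ and of $X\del=e_0$ on each non-trivial simple module straight off formula~(*), and then to decide, via self-duality, which invariant bilinear forms can occur. The non-trivial irreducibles are $L(1),\dots,L(p-1)$, with $L(\lambda)=Z^+(\lambda)$ of dimension $p$ for $1\leq\lambda\leq p-2$ and $L(p-1)$ of dimension $p-1$. First I would record from~(*) that $\del.m_j=j\,m_{j-1}$, so that $\del$ acts on each of these modules as a single Jordan block of full size ($p$, resp.\ $p-1$); in particular $\del$ is a regular nilpotent element of $\gl(V)$. At the same time $X\del=e_0$ acts diagonally, with eigenvalue $j+1+\lambda$ on $m_j$ (resp.\ $j$ on $\bar m_j$ for $L(p-1)$), and these eigenvalues are pairwise distinct modulo $p$, so $X\del$ is regular semisimple.

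Since $W_1$ is perfect (it is simple for $p\geq5$ and isomorphic to $\sl_2$ for $p=3$), its image lands in $\sl(V)$, whence $\h\subseteq\sl(V)=A_{\dim V-1}$ in every case, and the question of whether one can do better is governed entirely by invariant bilinear forms, i.e.\ by self-duality of $V$. Using the recorded fact that the only self-dual non-trivial simple modules are $L((p-1)/2)$ and $L(p-1)$, I would argue as follows. If $V$ is not self-dual it admits no invariant form, so no proper $\so(V)$ or $\sp(V)$ can contain the image and $\h=\sl_p=A_{p-1}$; that the image lies in no still smaller classical simple subalgebra follows because $\del$ is a single full-size Jordan block (together with the regular semisimple $X\del$), which forces any containing classical simple algebra to have $V$ as its natural module. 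For $V=L((p-1)/2)$, self-dual of odd dimension $p$, any invariant form is necessarily symmetric, so $\h=\so_p$.

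The one genuinely computational point---and the main obstacle---is to show that the invariant form on the \emph{even}-dimensional module $L(p-1)$ is alternating rather than symmetric, giving $\h=\sp_{p-1}$. Here I would exploit the $X\del$-weights: invariance of a nondegenerate form $B$ under the semisimple $X\del$ forces $B(\bar m_i,\bar m_j)=0$ unless the eigenvalues sum to zero, i.e.\ unless $i+j\equiv0\pmod p$, and since the weight spaces are one-dimensional $B$ is determined by the scalars $c_j:=B(\bar m_j,\bar m_{p-j})$. Invariance under $\del$ applied to the pair $(\bar m_j,\bar m_{p-j+1})$ then yields the recurrence $j\,c_{j-1}=(j-1)\,c_j$, whence $c_j=j\,c_1$ and therefore $c_{p-j}=(p-j)c_1\equiv -c_j\pmod p$. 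Thus $B(\bar m_{p-j},\bar m_j)=-B(\bar m_j,\bar m_{p-j})$, so $B$ is symplectic and $\h=\sp_{p-1}$.

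Finally, the regularity assertion follows in each case from the Jordan type of $\del$ computed at the outset: a single Jordan block of size $p$ is the regular nilpotent of both $\sl_p$ and $\so_p$ (the latter of type $B_{(p-1)/2}$), and a single block of the even size $p-1$ is the regular nilpotent of $\sp_{p-1}$. Hence in all three cases $\del$ is represented by a regular nilpotent element of $\h$, completing the argument.
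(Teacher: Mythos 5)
Your proof is correct and structurally it is the same as the paper's: both run on the self-duality classification (among non-trivial simples only $L(p-1)$ and $L((p-1)/2)$ are self-dual), settle the form type in those two cases (odd dimension forcing the orthogonal case), place the non-self-dual modules in type $A_{p-1}$, and deduce regularity of $\del$ from its single full-size Jordan block. The one substantive difference is the symplectic case: the paper disposes of $L(p-1)$ by saying one checks directly, citing Lemma~11.5 of the Herpel--Stewart reference, whereas you actually perform the check, and your computation is right ($X\del$-invariance confines the form to the pairs $(\bar m_j,\bar m_{p-j})$, and $\del$-invariance gives $jc_{j-1}=(j-1)c_j$, hence $c_j=jc_1$ and $c_{p-j}\equiv -c_j$, so the form is alternating); this buys self-containedness where the paper leans on a citation. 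Two caveats. First, when $\dim V=p$ the classical simple algebra of type $A_{p-1}$ is $\psl_p$, not $\sl_p$ (in characteristic $p$ the algebra $\sl_p$ has a one-dimensional centre); this is cosmetic, since the simple, centreless $W_1$ meets the scalars trivially and so embeds in $\psl_p$, which is what the paper writes. Second, your parenthetical claim that a full-size Jordan block together with a regular semisimple element \emph{forces any containing classical simple algebra to have $V$ as its natural module} is not justified as stated---a principal $\sl_2$ in $\sl_n$ has exactly these properties on a non-natural module---so you have not rigorously excluded classical simple subalgebras of $\gl(V)$ embedded by a non-natural representation; but the paper's own proof does not address such subalgebras either (its minimality argument is purely the presence or absence of an invariant form on $V$), and the later applications of the lemma only use the existence of the stated embeddings with $\del$ regular, so this extra remark, while shaky, leaves you no worse off than the paper.
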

\begin{proof} By the earlier remarks of this section, a simple non-trivial $p$-representation $V$ is self-dual if and only if $V=L(p-1)$ or $V=L((p-1)/2)$. One checks directly, c.f. \cite[Lem. 11.5]{HS14}, that the action of $W_1$ on $L(p-1)$ preseves a symplectic form. Since the dimension of $L((p-1)/2)$ is odd, the action of $W_1$ must preserve an orthogonal form. Otherwise $V$ is not self-dual and $\dim V=p$, so the actions here give $W_1\subseteq\psl_p$ of type $A_{p-1}$. For the last statement, examining the action of $W_1$ on $V$ in each case, one sees that the element $\del$ acts in each case with a single Jordan block on $V$, i.e. on the natural module for $\h$. This shows that $\del$ is regular in $\h$.\end{proof}

\subsection{Representations of Hamiltonians} \label{repsofH2}
Again we assume that $p\geq 5$.
In Theorem \ref{onlyW1s} we claim that the only non-classical simple subalgebras of exceptional simple Lie algebras in good characteristic are isomorphic to $W_1$. Most possibilities can be ruled out on dimensional grounds, and we have a special argument to deal with the case of the Zassenhaus algebra $W(1;[2])$. It will remain to show that there are no subalgebras of $\g$ isomorphic to the first restricted (graded) Hamiltonian algebra $H_2=H(2;(1,1))^{(2)}$ of dimension $p^2-2$, the non-restricted (non-graded) Hamiltonian $H(2;(1,1);\Phi(\tau))^{(1)}$ of dimension $p^2-1$, the non-restricted (non-graded) Hamiltonian $H(2;(1,1);\Phi(1))=H(2;(1,1);\Delta)$ of dimension $p^2$ or the second Witt algebra $W_2=W(2;(1,1))$. For explicit descriptions of the Hamiltonian algebras and their minimal $p$-envelopes, see \cite[\S4.2]{Str04} or \cite[\S5]{FSW14}, \cite[\S10.3]{Str09} or \cite[\S5]{FSW14}, and \cite[\S10.4]{Str09} respectively. In \S\ref{sec:nonclass} we first show that there are no $p$-subalgebras $\g$ isomorphic to the minimal $p$-envelopes of these algebras. Since $H_2$ appears as a $p$-subalgebra of $W_2$ (indeed it is usually constructed in this way) it will suffice to show that there are no $p$-subalgebras $\h$ isomorphic to $H_2$, $H(2;(1,1); \Phi(\tau))^{(1)}$ or $H(2;(1,1);\Phi(1))$. And for this, we will show that there is no restriction of the adjoint module $\g|\h$ compatible with a further restriction to a chosen $p$-subalgebra isomorphic to $W_1$ in $H_2$. (Later, we will have computed the composition factors of the restriction of $\g$ to every possible $p$-subalgebra isomorphic to $W_1$.)

\subsubsection{Graded Hamiltonians.} Let us first recall a concrete description of $H:=H_2$ by basis and structure constants, which can be found in \cite{Kor78}, for example, or generated from the general description given in \cite[\S4]{SF88}. The algebra $H$ has a basis $e_{i,j}$ for $-1\leq i,j\leq p-2$ with $-1\leq i+j\leq 2p-5$. It is easy to check from this that $\dim H=p^2-2$. The multiplication in $H$ is given by the formula $[e_{i,j},e_{k,l}]=((i+1)(l+1)-(j+1)(k+1))e_{i+k,j+l}$ whenever $(i+k,j+l)$ satisfy the conditions above and $0$ otherwise.
The algebra $H$ is a restricted Lie algebra with
$p$-th powers are defined as $e_{0,0}^{[p]} = e_{0,0}$ and 
$e_{i,j}^{[p]} = 0$ otherwise.
Hence, $H$ has a grading $H=\bigoplus\sum_{i=-1}^{2p-5} H(i)$ (inherited from $W_2$) with $e_{i,j}$ in degree $i+j$. In particular $H(\geq 0):=\la e_{i,j}:i+j\geq 0\ra$ is a subalgebra of $H$ of index $2$. The subalgebra $H(\geq 0)$ is a semidirect product of the subalgebra $H(0)=\la e_{1,-1},e_{0,0},e_{-1,1}\ra\cong \sl_2$ and its $p$-ideal $H(>0)=\la e_{i,j}:i+j>0\ra$. 
As $[e_{0,0},e_{-1,1}]=2e_{1,-1}$ and $[e_{0,0},e_{1,-1}]=-2e_{1,-1}$ an $\sl_2$-triple is given by $(E,H,F)=(e_{-1,1},e_{0,0},e_{1,-1})$.

The elements $x:=e_{-1,0}$ and $y:=e_{0,-1}$ are important and span a vector space complement to $H(\geq 0)$ in $H$. Note that $[x,y]=0$. Since $H(>0)$ is an ideal in $H(\geq 0)$, any representation of $H(0)$ may be lifted to a representation of $H(\geq 0)$ by insisting that $H(>0)$ act trivially. Because $H(0)\cong \sl_2$, the irreducible $p$-representations $L(r)$ of $H(\geq 0)$ are classified by the integers $0\leq r\leq p-1$ with $L(r)$ of dimension $r+1$. Let us write $\hat L(r)$ for the corresponding module lifted to $H(\geq 0)$.

In the adjoint representation of $H$ on itself, the elements $e_{p-3,p-2}$  and $e_{p-2,p-3}$ span the highest weight space, i.e. the space killed by $H(>0)$. On these, the element $e_{0,0}$ has weight $(p-1)-(p-2)=1$ and $(p-2)-(p-1)=-1$. By Frobenius reciprocity it follows that the adjoint module is a quotient of the Verma module $M(1)=\Ind_{u(H(\geq 0),0)}^{u(H,0)}\hat L(1)$. (For more information on induced modules, see \cite[\S5.6]{SF88}.) The remaining $p$-representations of $H_2$ were first determined in \cite{Kor78}, though the most general reference, valid for higher rank Hamiltonian modular Lie algebras is \cite{She88} together with certain corrections made in \cite{Hol98}. For our purposes, the following statement is all we need:

\begin{lemma}\label{h2reps} A simple restricted representation of $H=H(2;(1,1))^{(2)}$ is isomorphic to one of $L_H(0)\cong k$, trivial; $L_H(1)$, the adjoint module; or the Verma module $L_H(r)=M(r)$ for $2\leq r\leq p-1$ of dimension $(r+1)p^2$ obtained by inducing the module $\hat L(r)$ from $H(\geq 0)$ to $H$.\end{lemma}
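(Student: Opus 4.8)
The plan is to run highest-weight theory for the grading $H=H(-1)\oplus H(\geq 0)$, with $H(-1)=\la x,y\ra$ two-dimensional abelian and $H(\geq 0)=H(0)\ltimes H(>0)$, $H(0)\cong\sl_2$. First I would reduce every simple restricted module to an induced one. Since $H(>0)$ is a $p$-nilpotent ideal (each $e_{i,j}^{[p]}=0$ there), every element of $H(>0)$ acts as a nilpotent operator on any restricted module $V$, so by Engel's theorem $V^{H(>0)}\neq 0$; as $H(>0)$ is an ideal of $H(\geq 0)$, this space is stable under $H(0)\cong\sl_2$ and hence contains a simple $\sl_2$-submodule $L(r)$ for some $0\leq r\leq p-1$, on which $H(>0)$ acts by zero, i.e. a copy of $\hat L(r)$ as an $H(\geq 0)$-module. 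By Frobenius reciprocity for $\Ind=u(H)\otimes_{u(H(\geq 0))}-$, the inclusion $\hat L(r)\hookrightarrow V$ yields a nonzero, hence surjective, map $M(r)\to V$. Thus every simple restricted $H$-module is a quotient of some $M(r)$.

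Next I would invoke the standard structure of these induced (Kac) modules. The module $M(r)$ is generated by its top graded piece $M(r)_0\cong\hat L(r)$, on which $H(>0)$ acts by zero while $H(0)$ preserves degrees; so the degree-$0$ part of any submodule is an $H(\geq 0)$-submodule of the simple module $\hat L(r)$, forcing such a submodule either to contain $M(r)_0$ (and hence equal $M(r)$) or to miss it. This gives $M(r)$ a unique maximal submodule and a simple head $L_H(r)$, so that every simple restricted $H$-module is isomorphic to $L_H(r)$ for some $0\leq r\leq p-1$, and there are at most $p$ of them. It then remains to identify each $L_H(r)$: for $r=0$, $\hat L(0)$ is trivial and $M(0)\twoheadrightarrow k$, so $L_H(0)\cong k$; for $r=1$, the Frobenius-reciprocity remark preceding the lemma presents the adjoint module as a quotient of $M(1)$, and since $H$ is a simple Lie algebra its adjoint module is irreducible, so $L_H(1)$ is the adjoint module. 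As $\dim k=1$ and $\dim(\text{adjoint})=p^2-2$ are strictly smaller than $\dim M(0)=p^2$ and $\dim M(1)=2p^2$, the modules $M(0)$ and $M(1)$ are reducible. The remaining assertion is that, in contrast, $M(r)$ is \emph{irreducible} for $2\leq r\leq p-1$, whence $L_H(r)=M(r)$ has dimension $(r+1)p^2$.

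The hard part is precisely this irreducibility. I would first record that $M(r)$ is self-dual. Since $H$ is simple it is unimodular, so $u(H)$ is a symmetric Frobenius algebra and contragredient duality preserves the module category; moreover $H/H(\geq 0)=H(-1)$ is the natural $\sl_2$-module (the weights of $\ad e_{0,0}$ on $x,y$ are $+1,-1$), on which $\sl_2$ acts by trace-zero operators and $H(>0)$ acts by zero, so the modular character $z\mapsto\tr\!\big(\ad z\mid H/H(\geq 0)\big)$ of $H(\geq 0)$ is trivial and induction agrees with coinduction. Combined with $\hat L(r)^*\cong\hat L(r)$ this gives $M(r)^*\cong M(r)$, so $M(r)$ has a simple socle as well as a simple head.

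To upgrade "simple head and socle" to "simple," one must rule out intermediate composition factors; equivalently, one must show that the top $\hat L(r)$ already lies in the $H$-submodule generated by the bottom piece $x^{p-1}y^{p-1}\otimes\hat L(r)$. This amounts to climbing from the bottom back to the top by repeatedly applying the degree-raising elements of $H(1)=\la e_{-1,2},e_{0,1},e_{1,0},e_{2,-1}\ra$, and the coefficients that control this climb are explicit polynomials in $r$ coming from the structure constants together with the $\sl_2$-weights on the intermediate pieces. The crux, which is exactly the main obstacle, is to show that these coefficients are nonzero modulo $p$ precisely when $r\notin\{0,1\}$ — equivalently, that the contravariant form on $M(r)$ has nonvanishing Gram determinant for $2\leq r\leq p-1$, its vanishing at $r=0,1$ accounting for the two reducible cases. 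This nondegeneracy computation is carried out in \cite{Kor78} and, in greater generality, in \cite{She88} and \cite{Hol98}.
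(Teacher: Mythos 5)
Your overall architecture is reasonable and, as far as the paper is concerned, uncontroversial: the paper gives no proof of this lemma at all, quoting it from \cite{Kor78}, \cite{She88}, \cite{Hol98}, and the reduction you build (Engel plus Frobenius reciprocity to realise every simple as a quotient of some $M(r)$; identification of $k$ and the adjoint module; self-duality of $M(r)$; deferral of the irreducibility of $M(r)$, $2\leq r\leq p-1$, to those same references) is exactly the pattern the paper itself follows for the non-graded Hamiltonians in Lemma \ref{weirdh2reps}. However, there is a genuine gap at the step where you claim $M(r)$ has a unique maximal submodule. The dichotomy you prove is correct: any submodule either contains $M(r)_0$ (hence equals $M(r)$) or meets it trivially. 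But this does not yield a simple head: the sum of all proper submodules could a priori meet $M(r)_0$ nontrivially even though each individual proper submodule does not. That inference is only valid when submodules (or at least the radical) are graded, and here they need not be. Unlike for $W_1$, where the grading is the eigenspace decomposition of the toral element $X\del$, the degree grading of $H(2;(1,1))^{(2)}$ is not cut out by any toral element of $H$: the toral element $e_{0,0}$ has weight $a-b+\mu$ on $x^ay^b\otimes v_\mu$, so for instance $xy\otimes v_\mu$ and $1\otimes v_\mu$ have the same weight in different degrees; weight spaces mix degrees, and submodules need not respect the grading. This is not a cosmetic point, because the simple-head property is precisely what you need to conclude that $k$ and the adjoint module are the \emph{only} simple quotients of $M(0)$ and $M(1)$; without it, your list of simples could be incomplete.

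The gap is fixable with tools you already have, by working at the bottom of $M(r)$ instead of the top. Since $M(r)$ is free over $u(H(-1))=k[x,y]/(x^p,y^p)$, take any nonzero element $\sum x^ay^b\otimes v_{ab}$ of a submodule $N$, choose $(a,b)$ minimal in its support for the componentwise order, and multiply by $x^{p-1-a}y^{p-1-b}$: all other terms die against $x^p=y^p=0$, leaving a nonzero element of the bottom piece $B=x^{p-1}y^{p-1}\otimes\hat L(r)$. As $B$ is $H(0)$-stable and isomorphic to $L(r)$ as an $\sl_2$-module (the commutator correction terms also vanish against $x^p=y^p=0$), the intersection $N\cap B$ is a nonzero $H(0)$-submodule of a simple module, so $N\supseteq B$. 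Thus every nonzero submodule contains $B$, and $M(r)$ has simple socle $\la u(H)\cdot B\ra$. Your self-duality argument $M(r)^*\cong M(r)$ — which is correct, since the relative modular character $z\mapsto\tr(\ad z\mid H/H(\geq 0))$ vanishes on $H(\geq 0)$ and $\hat L(r)^*\cong\hat L(r)$ — then converts simple socle into simple head, after which the rest of your proof goes through. This bottom-up argument is also the one underlying the references you cite, so the repair costs nothing beyond rearranging your own ingredients.
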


Now, let $W$ be the $p$-subalgebra of $H$ spanned by the elements $e_{0,j}$. It is easily seen that $W$ is isomorphic to the Witt algebra $W_1$ with the element $X\del$ represented by $h=e_{0,0}$ and the element $\del$ represented by $y=e_{0,-1}$. We wish to calculate the composition factors $[L_H(r)|W]$ of the simple restricted representations of $H$ to $W$ according to the process described in the previous section. 

\begin{lemma}\label{h2repsrest} The restrictions of simple restricted $H=H(2;(1,1))^{(2)}$-modules $L_H(r)$ to $W$ are as follows. We have $[L_H(0)|W]=L(0)$,  $[L_H(1)|W]=[\bigoplus_{r=1}^{p-2}L(r)\oplus L(p-1)^{ 2}]$, and \[[L_H(r)|W]=\left[\left(\bigoplus_{r=1}^{p-2}L(r)\oplus L(0)^2\oplus L(p-1)^2\right)^{(r+1)}\right].\] In particular every $p$-representation of $H$ restricted to $W$ contains the same number of composition factors of each $L(r)$ such that $1\leq r\leq p-2$.\end{lemma}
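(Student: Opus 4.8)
The plan is to treat the three families of simple restricted $H$-modules listed in Lemma \ref{h2reps} separately, in each case producing a grading to which Lemma \ref{lem:grade} applies and then identifying the semisimplification by its graded $X\del$-character rather than by running Algorithm \ref{wCompAlg} step by step. The trivial module $L_H(0)$ restricts to $L(0)$ with nothing to prove, so the work is in the adjoint module $L_H(1)$ and the Verma modules $L_H(r)=M(r)$ for $2\le r\le p-1$. Throughout I use that $\del$ is represented by $y=e_{0,-1}$ and $X\del$ by $h=e_{0,0}$, that $[h,x]=x$ and $[h,y]=-y$ (from the structure constants), and the weight data from \S\ref{repsofW1}: a generic $L(\lambda)$ ($1\le\lambda\le p-2$) is $p$-dimensional with $\del$ acting as a single Jordan block, so in a compatible grading it occupies $p$ consecutive grades one-dimensionally with $X\del$-weight $\lambda+1$ on its $\del$-killed top line; likewise $L(p-1)$ occupies $p-1$ consecutive grades with top weight $1$, and $L(0)$ sits in a single grade with weight $0$.

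For the Verma modules I would use the induced structure $M(r)=\Ind\hat L(r)$: by PBW over the abelian $\la x,y\ra$ (with $x^{[p]}=y^{[p]}=0$) a basis is $x^a y^b\otimes v$ with $0\le a,b\le p-1$ and $v$ running over an $h$-weight basis of the $\sl_2$-module $\hat L(r)$. Grading by $2b$, the element $\del=y$ raises $b$ (and kills $b=p-1$, since $y^p=y^{[p]}=0$), hence raises the grade by $2$, while $h$ acts on $x^a y^b\otimes v$ with weight $a-b+s$ (with $s$ the weight of $v$), preserving each graded piece; thus Lemma \ref{lem:grade} applies. The key observation is that this graded character is completely uniform: for each fixed $b$, as $a$ runs over $\Z/p$ and $s$ over the $r+1$ weights of $\hat L(r)$, the weight $a-b+s$ hits every residue exactly $r+1$ times, independently of $b$. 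I would then exhibit the claimed module $(\bigoplus_{\lambda=1}^{p-2}L(\lambda)\oplus L(0)^2\oplus L(p-1)^2)^{(r+1)}$ as a graded semisimple module realizing this character: place every generic factor with top in the top grade $2(p-1)$ (each then meets all $p$ grades once, contributing residue $\lambda-b$ at grade $2b$ and so covering all residues except $-b$ and $-1-b$), place the two families of $L(p-1)$ with tops at grades $2(p-1)$ and $2(p-2)$, and the two families of $L(0)$ in grades $2(p-1)$ and $0$; a short check shows the two $L(p-1)$-strings together with the $L(0)$'s supply exactly the missing residues $-b$ and $-1-b$ in every grade. By the uniqueness clause of Lemma \ref{lem:grade} this is the semisimplification of $M(r)$, giving the stated composition factors.

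For the adjoint module $L_H(1)=H$ I would grade $H$ by assigning $e_{i,j}$ the degree $-2j$, so that $\del=\ad e_{0,-1}$ (which lowers $j$ by one) raises the grade by $2$ and $X\del=\ad e_{0,0}$ acts on $e_{i,j}$ with weight $j-i$, fixing each grade; Lemma \ref{lem:grade} again applies. Reading off the index set of $H$ (all $(i,j)$ with $-1\le i,j\le p-2$ and $-1\le i+j\le 2p-5$), the grade $-2j$ carries every residue once for the interior values $0\le j\le p-3$ and all nonzero residues once for the two extreme grades $j=-1$ and $j=p-2$. Matching this against graded realizations as above forces one copy of each generic $L(\lambda)$ together with two copies of $L(p-1)$ — one with top at $j=-1$, one with top at $j=0$ — which account precisely for the two residues missing from the generics in each interior grade and for the absence of weight $0$ at the two extreme grades; no $L(0)$ occurs. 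Uniqueness in Lemma \ref{lem:grade} then yields $[L_H(1)|W]=\bigoplus_{\lambda=1}^{p-2}L(\lambda)\oplus L(p-1)^2$. The final sentence of the lemma is now immediate, since in all three cases every generic $L(\lambda)$, $1\le\lambda\le p-2$, occurs with one and the same multiplicity ($0$, $1$, and $r+1$ respectively).

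The main obstacle I anticipate is purely the bookkeeping at the two extreme grades: fitting the $\del$-strings of $L(p-1)$ and $L(0)$ into the available grades (which do not wrap around cyclically) must be done so that weight $0$ is supplied or omitted in exactly the right places — this is precisely what distinguishes the adjoint case (no $L(0)$, extreme grades lacking weight $0$) from the Verma case (two $L(0)$'s, every grade uniform). Invoking the uniqueness in Lemma \ref{lem:grade} rather than iterating Algorithm \ref{wCompAlg} is what keeps the argument short: it suffices to display a single graded semisimple module with the correct graded character, and the answer is then forced.
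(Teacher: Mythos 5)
Your proof is correct, and it reaches the paper's answer by a genuinely different route built on the same two ingredients (the induced basis $x^ay^b\otimes v$ and Lemma \ref{lem:grade}). For the Verma modules the paper does not grade $M(r)$ in one go: it first verifies, by commuting $e_{0,j}$ past $x^ay^b$, that the span $M(r)_r$ of the vectors $x^ay^b\otimes v_r$ is a $W$-submodule, grades that $p^2$-dimensional layer by $2b$, runs the algorithm of Proposition \ref{wCompAlg} on it, and then inducts along the resulting filtration with $r+1$ layers. You instead grade all of $M(r)$ by $2b$ at once --- legitimate, since Lemma \ref{lem:grade} only asks for compatibility of the grading with $\del$ and $X\del$, not with all of $W$ --- observe that every graded piece carries each residue with multiplicity $r+1$, and then pin down the answer by exhibiting an explicit semisimple graded model and invoking the uniqueness clause of Lemma \ref{lem:grade}, rather than by iterating the algorithm. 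This spares you both the submodule verification and the induction over layers, and absorbs the multiplicity $r+1$ into taking $r+1$ copies of a single multiplicity-one configuration. For the adjoint module the paper argues directly (the $\del$-killed vectors $e_{a,-1}$ give one copy of each $L(r)$, $1\leq r\leq p-1$; a dimension count plus the absence of zero weights in what remains force one further $L(p-1)$), whereas your grading by $-2j$ makes the adjoint case uniform with the Verma case. I checked your boundary bookkeeping and it is right: the generic factors with tops in the top grade miss exactly the residues $-b$ and $-1-b$ in grade $2b$; the two $L(p-1)$-strings with tops at $2(p-1)$ and $2(p-2)$ (respectively at $j=-1$ and $j=0$ in the adjoint case) supply these; and the two $L(0)$'s at the extreme grades, needed only in the Verma case, fill in precisely the weight-zero gaps, matching the fact that the extreme grades of $H$ carry no zero weight.
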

\begin{proof} The case $r=0$ is clear.
For $r=1$ notice that $\ad y(e_{a,-1})=[e_{0,-1},e_{a,-1}]=0$ for each $0\leq a\leq p-2$
and that $\ad h(e_{a,-1})=[e_{0,0},e_{a,-1}]=(-a-1)e_{a,-1}$. Thus $[L_H(1)|W]$ contains at least one copy of each composition factor $L(r)$ with $1\leq r\leq p-1$. The sum of the dimensions of these composition factors is $p^2-p-1$. Together these account for the full $0$-weight space of $h$ on $L_H(1)$; $X\del$ acts non-trivially on the remaining weight spaces. It follows that there is a further composition factor isomorphic to $L(p-1)$, which exhausts the dimension of $M$.

For the remaining cases $2\leq r\leq p-1$ we use the algorithm in Proposition \ref{wCompAlg}; though this requires some set-up.  We have $M(r)=\Ind_{H(\geq 0)}^H \hat L(r)$. We may take a basis $\{v_{r},v_{r-2},\dots,v_{-r}\}$ for $\hat L(r)$, where $v_i$ is in the $i$-weight space for $h$. Then since $\la x,y\ra$ is a vector space complement for $H(\geq 0)$ in $H$, we may take a basis $\{x^ay^b\otimes v_i: 0\leq a \leq p-1,\ 0\leq b\leq p-1,\ i=r-2c, 0\leq c\leq r\}$ of $M(r)$. The action of $z\in H$ on $M$ is given by $z.(x^ay^b\otimes v_i)=(zx^ay^b)\otimes v_i$. Every vector in this basis is a weight vector for $h$. Since $[x,y]=0$ we have $y.(x^ay^b\otimes v_i)=x^ay^{b+1}\otimes v_i$ and so (recalling $M(r)$ is a $p$-representation) each $x^ay^{p-1}\otimes v_i$ is killed by $y$. 

One checks that the span of the vectors $\{x^ay^b\otimes v_{r}:0\leq a,b\leq p-1\}$ is a $W$-submodule, $M(r)_{r}$; the key calculation here is that $e_{0,r}$ will commute with $x^ay^b$ in $u(H,0)$ modulo vectors $x,y$; or $e_{a,b}$ with $a+b>0$ together with $e_{-1,1}$, any of which kills $v_r$; or $e_{0,0}$, which stabilises $v_r$. Moreover we may grade $M(r)_{r}$ as $M(r)_{r}=\bigoplus M(r)_{r}(i)$ with $M(r)_{r}(2b)$ spanned by the vectors $\{x^ay^b\otimes v_{r}:0\leq a\leq p-1\}$. Then $M(r)_{r}$ satisfies the hypotheses of Proposition \ref{wCompAlg} and we may write down the composition factors according to that recipe. The weight of $h$ on the vector $x^ay^{p-1}\otimes v_{r}$ is $a-(p-1)+r=a+1+r$ for each $0\leq a\leq p-1$, thus there is a composition factor $L(a)$ for each $0\leq 1\leq p-1$. Removing the $h$-weights of these from $M(r)_{r}$ and repeating, we find additionally a copy of the $W_1$-modules $L(0)$ and of $L(p-1)$. In the quotient of $M(r)$ by $M(r)_{r}$ we have a submodule $M(r)_{r-2} + M(r)_{r}\subset M(r)/M(r)_{r}$ on which we may repeat the same task. Since there are $r+1$ values of $i$ on which we perform this task, we are done.\end{proof}

\subsubsection{Non-graded Hamiltonians.} Essentially the same task can be performed for the minimal $p$-envelopes of the simple Lie algebras $H(2;(1,1);\Phi(\tau))^{(1)}$ and $H(2;(1,1);\Phi(1))$. By \cite[\S10.3, \S10.4]{Str09} the minimal $p$-envelope of each is $(p^2+1)$-dimensional. There is again a co-dimension $2$ subalgebra $H(\geq 0)$ containing  $H(>0)$ of dimension $p^2-4$ as an ideal and with $H(\geq 0)/H(>0)\cong \sl_2$. As before, we lift all simple restricted representations from $H(0)$ to $H(\geq 0)$ by letting $H(>0)$ acts trivially and induce the Verma modules $M(r)$ from $H(\geq 0)$ to $H$.

We will want to see that the only reducible Verma modules are $M(1)$ and $M(0)$. This is supplied by \cite[Theorem 5.3]{FSW14} for $H(2;(1,1);\Phi(\tau))^{(1)}$, but we will need to follow the same line of argument in the other case. For this we need an explicit description of the elements of $H=H(2;(1,1);\Phi(1))$ in terms of elements of $W_2$. The latter has basis $\{X^iY^j\del_X,X^iY^j\del_Y:0\leq i,j\leq p-1\}$ and graded with $X^iY^j\del_X$ and $X^iY^j\del_Y,$ in degree $i+j-1$. By \cite[\S10.4]{Str09}, $H$ is spanned by the elements $\del_X(f)\del_Y-\del_Y(f)\del_X-X^{p-1}f\del_Y$ for $f\in k[X,Y]/(X^p,Y^p)$. Applying this recipe to the monomial $f=X^iY^j$, we see that $H$ has basis \[\{jY^{j-1}\del_X+X^{p-1}Y^j\del_Y,iX^{i-1}Y^j\del_Y-jX^iY^{j-1}\del_X:1\leq i\leq p-1, 0\leq j\leq p-1\}\]or in divided power notation, used for example by GAP,
\[\{Y^{(j-1)}\del_X-X^{(p-1)}Y^{(j)}\del_Y,X^{(i-1)}Y^{(j)}\del_Y-X^{(i)}Y^{(j-1)}\del_X:1\leq i\leq p-1, 0\leq j\leq p-1\},\]
where $X^{(-1)}=Y^{(-1)}=X^{-1}=Y^{-1}$ is understood to be zero.

Only the element $\del_X-X^{(p-1)}Y\del_Y$ has a $p$th power outside this set, viz. $-Y\del_Y$, so that adding for example the element $X\del_X+Y\del_Y$ to this basis gives the basis of the minimal $p$-envelope $L$ of $H$.

Since $L$ is a $p$-subalgebra of $W_2=W(2;(1,1))$, we induce a restricted descending filtration on $L$ from the natural grading $W_2=\bigoplus_{d=-1}^{2p-3}W_d$, namely $L_{(n)}:=L\cap W(2;(1,1))_{(n)}$, where $W(2;(1,1))_{(n)}=\bigoplus_{d\geq n}W(2;(1,1))_d$. One checks this filtration has depth $1$ and height $2p-4$: \emph{a fortiori} we have $W(2;(1,1))_{r}=0$ for $r\leq -2$ or $r\geq 2p-2$, and $W(2;(1,1))_{(2p-3)}=W(2;(1,1))_{2p-3}$ is spanned by the two elements $X^{p-1}Y^{p-1}\del_X$  and $X^{p-1}Y^{p-1}\del_Y$ hence has no intersection with $L$. We claim that the associated graded algebra $\gr L$ is isomorphic to $H(2;(1,1))$. A basis of the latter is \[\{X^{(i-1)}Y^{(j)}\del_Y-X^{(i)}Y^{(j-1)}\del_X:0\leq i,j\leq p-1, (i,j)\neq (0,0)\}\cup\{X^{(p-1)}\del_Y,Y^{(p-1)}\del_X\}\] and so we define a linear map from $L$ to $H(2;(1,1))$ which is an identity on their intersection in $W(2;(1,1))$ and where we send the basis element $Y^{(j-1)}\del_X-X^{(p-1)}Y^{(j)}\del_Y\mapsto Y^{(j-1)}\del_X$. It is clear this descends to an isomorphism of restricted graded Lie algebras $\gr(L)\to H(2;(1,1))$.

We have the ingredients to apply the following theorem, which we will do in the succeeding lemma.
\begin{theorem}[{\cite[Thm.~4.3]{FSW14}}]\label{fswthm}Let $L$ be a restricted Lie algebra, and let $(L_{(n)})_{n\in\Z}$ be a descending restricted filtration of $L$ of depth $1$ and height $h$. Let $V$ be a restricted $\gr_0(L)$-module, and let $M:=\Ind_{(u(L_{(0)}),0)}^{(u(L),0)}(V)$. Then one has a canonical isomorphism of graded restricted $\gr(L)$-modules \[\Ind_{u(\gr_+(L),0)}^{u(\gr(L),0)}(V)\cong \gr M,\]
where $\gr_+(L):=\bigoplus_{n\geq 0}\gr_n(L)$. In particular, if $\Ind_{u(\gr_+(L),0)}^{u(\gr(L),0)}(V)$ is an irreducible $\gr(L)$-module, then $M$ is an irreducible $L$-module.\end{theorem}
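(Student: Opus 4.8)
The plan is to transport the filtration on $L$ first to the restricted enveloping algebra $u(L)$, then to the induced module $M$, and finally to identify $\gr M$ with the graded induced module by a dimension count. The filtration $(L_{(n)})$ induces an algebra filtration $(u(L)_{(n)})$ on $u(L)$, where $u(L)_{(n)}$ is spanned by products $x_1\cdots x_k$ of elements $x_i\in L_{(d_i)}$ with $\sum_i d_i\geq n$. The first key point, which I would establish via a restricted Poincar\'e--Birkhoff--Witt basis, is that the canonical comparison map is an isomorphism $u(\gr(L))\cong\gr u(L)$ of graded restricted algebras: choosing a $k$-basis of $L$ adapted to the filtration, the ordered monomials with each exponent in $\{0,\dots,p-1\}$ form a basis of $u(L)$, and the filtration degree of such a monomial is the sum of the filtration degrees of its factors. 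Here the \emph{restricted} filtration hypothesis $L_{(n)}^{[p]}\subseteq L_{(pn)}$ is exactly what guarantees that the induced $p$-operation on $\gr(L)$ agrees with the one on $\gr u(L)$.

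Next I would filter $M=u(L)\otimes_{u(L_{(0)})}V$. Since $V$ is a $\gr_0(L)$-module, so that $L_{(1)}$ acts as zero and $L_{(0)}$ acts through $\gr_0(L)$, the subspace $1\otimes V$ lies in filtration degree $0$; I set $M_{(n)}:=u(L)_{(n)}(1\otimes V)$. One checks this is an exhaustive, separated, $u(L)$-stable filtration with $u(L)_{(i)}M_{(j)}\subseteq M_{(i+j)}$, so that $\gr M$ is a graded module over $\gr u(L)\cong u(\gr(L))$. Regarding $V$ as a restricted $\gr_+(L)$-module on which $\gr_{>0}(L)$ acts trivially (legitimate, as $\gr_{>0}(L)$ is a $p$-ideal of $\gr_+(L)$), the assignment sending $1\otimes v$ to the class of $1\otimes v$ in $\gr_0 M$ is $\gr_+(L)$-equivariant, and hence extends $\gr(L)$-equivariantly to a homomorphism of graded restricted $\gr(L)$-modules $\Ind_{u(\gr_+(L),0)}^{u(\gr(L),0)}(V)\to\gr M$. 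It is surjective because $M_{(n)}=u(L)_{(n)}(1\otimes V)$ by construction.

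It remains to see this surjection is an isomorphism, which I would do by comparing dimensions. Because $L_{(0)}$ is a $p$-subalgebra, $u(L)$ is free as a right $u(L_{(0)})$-module of rank $p^{\dim L-\dim L_{(0)}}$, whence $\dim M=p^{\dim L-\dim L_{(0)}}\dim V$. On the graded side, $\gr_+(L)$ has dimension $\dim L_{(0)}$ while $\dim\gr(L)=\dim L$, so the graded induced module has dimension $p^{\dim\gr(L)-\dim\gr_+(L)}\dim V=p^{\dim L-\dim L_{(0)}}\dim V$, matching $\dim\gr M=\dim M$. An equal-dimensional surjection is an isomorphism. For the final assertion, if $N\subseteq M$ is a nonzero proper submodule, then the induced filtration makes $\gr N$ a $\gr(L)$-submodule of $\gr M$ with $0<\dim\gr N=\dim N<\dim M=\dim\gr M$, so $\gr M$ is reducible; contrapositively, irreducibility of $\Ind_{u(\gr_+(L),0)}^{u(\gr(L),0)}(V)\cong\gr M$ forces $M$ to be irreducible.

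The hard part will be the first step, namely proving $\gr u(L)\cong u(\gr(L))$ as graded \emph{restricted} algebras: one must verify not only the filtered behaviour of the PBW monomials but, crucially, that the $p$-operation descends compatibly to the associated graded, which is precisely where the $p$-th power condition in the definition of a restricted filtration is indispensable, and where the negative filtration degrees arising from depth $1$ must be tracked carefully. Once that structural isomorphism is secured, the module-theoretic steps and the dimension bookkeeping are essentially formal.
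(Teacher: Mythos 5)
The paper does not actually prove this statement: Theorem \ref{fswthm} is imported verbatim from \cite[Thm.~4.3]{FSW14} and used as a black box (in Lemma \ref{weirdh2reps}), so there is no internal argument to compare yours against. Judged on its own terms, your proof is correct, and it is the standard argument for results of this type (in outline, also that of the cited source): (1) the filtered restricted PBW theorem giving $u(\gr(L))\cong \gr u(L)$ as graded restricted algebras; (2) the canonical filtration $M_{(n)}=u(L)_{(n)}(1\otimes V)$, making $\gr M$ a graded $u(\gr(L))$-module generated in degree $0$ by the image of $1\otimes V$; (3) Frobenius reciprocity producing a graded surjection from the induced module, using that $\gr_{>0}(L)$ is a graded $p$-ideal of $\gr_+(L)$ killing $V$; (4) a dimension count ($u(L)$ free of rank $p^{\dim L-\dim L_{(0)}}$ over $u(L_{(0)})$, and $\dim\gr_+(L)=\dim L_{(0)}$) to upgrade the surjection to an isomorphism; (5) the fact that a nonzero proper submodule $N\subseteq M$ yields the nonzero proper graded submodule $\gr N\subseteq\gr M$, giving the irreducibility transfer. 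Two points should be made explicit. First, the dimension count silently assumes $\dim_k L<\infty$ and $\dim_k V<\infty$; the theorem as stated does not say so, but this is harmless here, since \cite{FSW14} works with finite-dimensional restricted Lie algebras and the present paper applies the result only to minimal $p$-envelopes of Hamiltonian algebras and to finite-dimensional $V=\hat L(\lambda)$. Second, your assertions that the filtration on $M$ is separated and that $\dim\gr M=\dim M$ are not free: they follow from boundedness of the filtration on $u(L)$, which is itself part of the filtered PBW analysis in your step one --- when straightening a word into ordered restricted monomials, neither the substitution $xy\mapsto yx+[x,y]$ nor $z^p\mapsto z^{[p]}$ lowers total filtration degree, the latter using $L_{(n)}^{[p]}\subseteq L_{(pn)}$ for $n\geq 0$ and, for basis vectors of degree $-1$, the depth-$1$ hypothesis (so that $z^{[p]}\in L=L_{(-1)}$ suffices). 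You correctly identify this PBW comparison as the crux; just note that the separatedness claims in your second paragraph are consequences of it rather than independent observations.
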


\begin{lemma}\label{weirdh2reps}A simple restricted representation of $H$, the minimal $p$-envelope of either the algebra $H(2;(1,1);\Phi(\tau))^{(1)}$ or $H(2;(1,1);\Phi(1))$, is isomorphic to one of $L_H(0)\cong k$, trivial; $L_H(1)$, the adjoint module of dimension $p^2-1$ or $p^2$, respectively; or $L_H(r)$ for $2\leq r\leq p-1$, the irreducible Verma module of dimension $(r+1)p^2$.\end{lemma}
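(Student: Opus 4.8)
The plan is to classify the simple restricted modules by highest weight and then to prove irreducibility of the generic Verma modules by passing to the associated graded via Theorem~\ref{fswthm}, where everything reduces to the graded Hamiltonian $H(2;(1,1))$, which is already understood. Throughout, write $L$ for the minimal $p$-envelope in question (the object denoted $H$ in the statement), with its codimension-two subalgebra $L(\geq 0)$, nilpotent ideal $L(>0)$, and $L(\geq 0)/L(>0)\cong\sl_2$ exactly as set up above. Since $L(>0)$ is a restricted nilpotent ideal of $L(\geq 0)$ acting by nilpotent operators, an Engel-type argument shows every simple restricted $L$-module $S$ has a nonzero $L(>0)$-fixed subspace, on which $L(\geq 0)$ acts through an $\sl_2$-module; choosing a simple summand $\hat L(r)$ ($0\leq r\leq p-1$) and applying Frobenius reciprocity realises $S$ as a quotient of the Verma module $M(r)=\Ind_{(u(L(\geq 0)),0)}^{(u(L),0)}(\hat L(r))$. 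Each $M(r)$ has dimension $(r+1)p^2$ because $L(\geq 0)$ has codimension two, and it has a unique simple head $L_H(r)$; so the simple restricted modules are precisely the $L_H(r)$, and it remains to decide the reducibility of each $M(r)$.

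For the cases $r=0,1$ I would argue exactly as in the graded situation of Lemma~\ref{h2reps}. The module $M(0)$ maps onto the trivial module, so $L_H(0)=k$. For $M(1)$, of dimension $2p^2$, Frobenius reciprocity reduces the identification of the adjoint as a quotient to exhibiting a vector of $L(\geq 0)$-weight $1$ in the adjoint representation annihilated by $L(>0)$, i.e.\ a top-layer highest weight vector, just as $e_{p-3,p-2}$ furnished in the graded setting. Since the simple algebra is $L$-irreducible of dimension $p^2-1$ or $p^2$, which is strictly less than $2p^2$, both $M(0)$ and $M(1)$ are reducible, with simple heads the trivial and adjoint modules respectively.

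The heart of the matter is irreducibility for $2\leq r\leq p-1$, and this is where I would use the filtration of depth $1$ and height $2p-4$ together with the isomorphism $\gr L\cong H(2;(1,1))$ established above. Applying Theorem~\ref{fswthm} with $V=\hat L(r)$ gives a graded isomorphism $\gr M(r)\cong\Ind_{u(\gr_+(L),0)}^{u(\gr(L),0)}(\hat L(r))$, exhibiting $\gr M(r)$ as the graded Verma module for $H(2;(1,1))$ attached to the $\sl_2$-module $\hat L(r)$. The representation theory of the graded Hamiltonian, as invoked for Lemma~\ref{h2reps} and carried out for the $\Phi(\tau)$ deformation in \cite[Thm.~5.3]{FSW14}, shows that this graded induced module is irreducible precisely when $r\neq 0,1$. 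The final clause of Theorem~\ref{fswthm} then forces $M(r)$ itself to be irreducible, so $L_H(r)=M(r)$ of dimension $(r+1)p^2$, which completes the list.

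The only genuinely new case is $\Phi(1)$, the $\Phi(\tau)$ case being \cite[Thm.~5.3]{FSW14}; accordingly the crux is the input to the last step. I expect the delicate point to be confirming that the graded irreducibility criterion is legitimately available for $\gr L\cong H(2;(1,1))$ and not merely for the simple derived subalgebra $H(2;(1,1))^{(2)}$ treated in Lemma~\ref{h2reps}: one must check that the extra toral and top-degree generators distinguishing $H(2;(1,1))$ from $H(2;(1,1))^{(2)}$ do not alter which graded Verma modules are simple, so that the Shen--Holmes determination transfers verbatim. Once this compatibility is in place, the remainder is routine bookkeeping with Frobenius reciprocity and dimensions.
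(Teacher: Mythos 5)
Your strategy coincides with the paper's: Frobenius reciprocity reduces everything to the Verma modules $M(r)$, the cases $r=0,1$ are handled by identifying the trivial and adjoint quotients, and irreducibility for $2\leq r\leq p-1$ is obtained by passing to $\gr L\cong H(2;(1,1))$ and invoking Theorem \ref{fswthm}. But the one step you yourself isolate as the crux --- that the graded irreducibility criterion is available for the full Hamiltonian $H(2;(1,1))$ rather than only for its simple derived subalgebra $H(2;(1,1))^{(2)}$ --- is exactly the step the paper has to prove, and you leave it as ``one must check''. As it stands this is a genuine gap: the sources behind Lemma \ref{h2reps} (Holmes, Shen, Koreshkov) classify simple restricted modules for $H(2;(1,1))^{(2)}$ only, and there is no statement for $H(2;(1,1))$ that transfers ``verbatim''.

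Moreover, your proposed repair points in the wrong direction: you suggest re-examining which graded Verma modules for the larger algebra are simple, i.e.\ redoing the classification. The paper's argument is simpler and one-way. Writing $\X=H(2;(1,1))^{(2)}\subseteq \Y=H(2;(1,1))$, the cokernel of $\X\hra\Y$ is concentrated in the positive grades $p-2$ and $2p-4$; hence $\Y=\X+\Y_{(0)}$, the degree $-1$ parts of $\X$ and $\Y$ agree, and the canonical map
\[\Ind_{u(\X_{(0)},0)}^{u(\X,0)}(\hat L(\lambda)|_\X)\to \Ind_{u(\Y_{(0)},0)}^{u(\Y,0)}(\hat L(\lambda))|_\X\]
is an isomorphism: both sides have dimension $(\lambda+1)p^2$, and surjectivity follows from $u(\Y)=u(\X)\,u(\Y_{(0)})$. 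Since the left-hand side is an irreducible $\X$-module for $\lambda\neq 0,1$ by \cite{Hol98}, the $\Y$-Verma module is irreducible already upon restriction to the subalgebra $\X$, hence a fortiori irreducible as a $\Y$-module --- no new classification for $\Y$ is required. With that restriction argument supplied, Theorem \ref{fswthm} finishes the proof exactly as you describe.
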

\begin{proof}A standard argument using Frobenius reciprocity gives every restricted simple module as a quotient of a restricted Verma module. As for the case $H(2;(1,1))^{(2)}$ it is straightforward to identify the adjoint module as a quotient of $M(1)$. Thus it suffices to show that the Verma modules $M(r)$ for $2\leq r\leq p-1$ are all irreducible. The case where $H$ is the minimal $p$-envelope of $H(2;(1,1);\Phi(\tau))^{(1)}$ is given by \cite[Theorem 5.3]{FSW14} whose line of argument we follow for the case $H(2;(1,1);\Phi(1))$. 

From the remarks above, we have $\Y:=\gr L\cong H(2;(1,1))$, which contains the simple graded subalgebra $\X=H(2;(1,1))^{(2)}$ with the cokernel of the map $\X\to \Y$ concentrated in grades $p-2$ and $2p-4$. Hence the canonical map \[\Ind_{u(\X_{(0)},0)}^{u(\X,0)}(\hat L(\lambda)|_\X)\to \Ind_{u(\Y_{(0)},0)}^{u(\Y,0)}(\hat L(\lambda))|_\X\] is an isomorphism. As $\Ind_{u(\X_{(0)},0)}^{u(\X,0)}(\hat L(\lambda)|_\X)$ is an irreducible restricted $\X$-module for $\lambda_0\neq 0,1$ by \cite{Hol98} this implies that $\Ind_{u(\Y_{(0)},0)}^{u(\Y,0)}(\hat L(\lambda))$ is an irreducible $\Y$-module. Consequently, by Theorem \ref{fswthm}, $M(\lambda)$ is an irreducible $L$-module unless $\lambda=0,1$.
\end{proof}

We wish to restrict each simple module to a suitable subalgebra of each non-graded Hamiltonian which is isomorphic to $W_1$. Thus to play the same game as before, we need in both case a $p$-subalgebra $W$ isomorphic to $W_1$. Such subalgebras do not appear to be well-known. Using the notation of \cite[\S5]{FSW14} for the case $H(2;(1,1);\Phi(\tau))$, the following lemma gives us such a subalgebra. The proof is left to the reader.
\begin{lemma}\label{w1subinweirdh} The subalgebra $H(2;(1,1);\Phi(\tau))^{(1)}$ of $W_2$ contains a $p$-subalgebra $W=W_1$ having basis \[\{(1-X^{(p-1)}Y^{(p-1)})\del_X,X\del_X-Y\del_Y, X^{(2)}\del_X-XY\del_Y, X^{(3)}\del_X-X^{(2)}Y\del_Y,\dots, X^{(p-1)}\del_X - X^{(p-2)}Y\del_Y\}\]
with these elements playing the roles of $\del,X\del,X^2\del,\dots,X^{p-1}\del$, respectively.

The subalgebra $H(2;(1,1);\Phi(1))$ of $W_2$ contains a $p$-subalgebra $W=W_1$ having basis \[\{\del_Y,Y\del_X-X\del_Y,Y^{(2)}\del_Y-XY\del_X,Y^{(3)}\del_Y-XY^{(2)}\del_X\dots, Y^{(p-1)}\del_Y-XY^{(p-2)}\del_X\}\]
with these elements playing the roles of $\del,X\del,X^2\del,\dots,X^{p-1}\del$, respectively.
\end{lemma}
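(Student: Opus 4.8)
The proof is a direct verification, which I would organise around the Hamiltonian vector field description of $H$. Label the displayed elements $v_0,v_1,\dots,v_{p-1}$ in the given order, so the assertion is that $v_k$ realises the $k$-th basis vector of $W_1=W(1;1)=\Der\bigl(k[X]/\la X^p\ra\bigr)$. It is cleanest to match $v_k$ with the divided-power vector $X^{(k)}\del$, for which $[X^{(i)}\del,X^{(j)}\del]=\bigl(\binom{i+j-1}{i}-\binom{i+j-1}{j}\bigr)X^{(i+j-1)}\del$; this is what the computation below produces, and it differs from the ordinary basis $X^k\del$ only by the invertible scalars $k!$. I would then check in turn: (a) each $v_k$ lies in $H$; (b) the $v_k$ obey these structure constants (with $v_m:=0$ for $m\notin\{0,\dots,p-1\}$), so that $W:=\la v_0,\dots,v_{p-1}\ra$ is a subalgebra and $v_k\mapsto X^{(k)}\del$ is a Lie isomorphism onto $W_1$; and (c) $W$ is closed under the $p$-operation inherited from $W_2$, with that operation matching $W_1$.

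For (a) and (b) in the case $H(2;(1,1);\Phi(1))$ I would exhibit $v_k$ as the Hamiltonian field $D_{XY^{(k)}}$, where $D_f=\del_X(f)\del_Y-\del_Y(f)\del_X-X^{p-1}f\del_Y$ is the generating map used in the construction of $H$: one has $D_{XY^{(k)}}=Y^{(k)}\del_Y-XY^{(k-1)}\del_X$, the deformation term $X^{p-1}\cdot XY^{(k)}\del_Y=X^pY^{(k)}\del_Y$ being zero, so in particular the field realising $X\del$ is the toral element $D_{XY}=Y\del_Y-X\del_X$. Membership is then immediate, since the $D_f$ span $H$. For the brackets I would use the homomorphism property $[D_f,D_g]=D_{\{f,g\}}$ of $f\mapsto D_f$ with respect to the Poisson bracket $\{f,g\}=\del_Xf\,\del_Yg-\del_Yf\,\del_Xg$; a short divided-power computation gives $\{XY^{(i)},XY^{(j)}\}=\bigl(\binom{i+j-1}{i}-\binom{i+j-1}{j}\bigr)XY^{(i+j-1)}$, whence $[v_i,v_j]=\bigl(\binom{i+j-1}{i}-\binom{i+j-1}{j}\bigr)v_{i+j-1}$, exactly the required constants, and the truncation $Y^{(m)}=0$ for $m\geq p$ delivers the vanishing whenever $i+j-1\geq p$. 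For $H(2;(1,1);\Phi(\tau))^{(1)}$ I would repeat this with the $\tau$-twisted generating map and bracket of \cite[\S5]{FSW14}; here the $v_k$ with $k\geq1$ are the Hamiltonian fields of the corresponding generating functions, while the extra summand $-X^{(p-1)}Y^{(p-1)}\del_X$ in $v_0$ is precisely the correction the deformation forces at the top of the filtration so that $v_0$ both lies in the derived algebra $H^{(1)}$ and makes $W$ close.

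For (c) I would note that $v_0$ is nilpotent, so $v_0^{[p]}=0$; that $v_1$ is a sum of two commuting toral derivations, hence toral with $v_1^{[p]}=v_1$; and that for $k\geq2$ the element $v_k$ is homogeneous of $W_2$-degree $k-1\geq1$, so $v_k^{[p]}$ has degree $p(k-1)>2p-3$ once $k\geq3$ and therefore vanishes, the single remaining case $k=2$ being a direct check. These values coincide with the $p$-operation of $W_1$ transported along $v_k\mapsto X^{(k)}\del$. Alternatively, once (b) gives the Lie isomorphism $W\cong W_1$ one may appeal to $Z(W_1)=0$: any two restricted structures on $W$ differ by a semilinear map into the centre, so it suffices to know that $W$ is $[p]$-closed, and the isomorphism is then automatically restricted. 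The main obstacle is the deformed algebra $H(2;(1,1);\Phi(\tau))^{(1)}$: one must transcribe the $\tau$-twisted bracket of \cite[\S5]{FSW14} accurately and verify that the lone correction term in $v_0$ simultaneously secures $[p]$-closure and membership in $H^{(1)}$ (rather than only in its $p$-envelope). Since $v_0$ is no longer homogeneous there, the clean degree argument for $v_0^{[p]}$ breaks down, and this top-of-the-filtration bookkeeping, together with keeping track of the divided-power binomial coefficients modulo $p$, is where the genuine care lies.
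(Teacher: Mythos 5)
Your treatment of the second algebra, $H(2;(1,1);\Phi(1))$, is correct and completable as written. The identification $v_k=D_{XY^{(k)}}$ kills the deformation term ($X^{p-1}\cdot XY^{(k)}=0$), so each $v_k$ is an untwisted Hamiltonian field; the divided-power computation $\{XY^{(i)},XY^{(j)}\}=\bigl(\binom{i+j-1}{i}-\binom{i+j-1}{j}\bigr)XY^{(i+j-1)}$ gives exactly the divided-power Witt constants; and your $[p]$-checks (nilpotency of $\del_Y$, torality of $D_{XY}$, the degree argument for $k\geq 3$, the direct check for $k=2$) all go through. Two remarks: first, you have tacitly replaced the element $Y\del_X-X\del_Y$ printed in the statement by $D_{XY}=Y\del_Y-X\del_X$; this replacement is forced, since $[\del_Y,\,Y\del_X-X\del_Y]=\del_X\neq\del_Y$, so you have in effect corrected a typo. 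Second, the invoked ``homomorphism property $[D_f,D_g]=D_{\{f,g\}}$'' is \emph{not} a general property of the deformed map with the standard Poisson bracket (e.g.\ $[D_{Y^{(2)}},D_Y]=D_{X^{p-1}Y^{(2)}}\neq 0=D_{\{Y^{(2)},Y\}}$); it is valid in your argument only because $D_f=H_f$ on the functions $XY^{(k)}$ and their brackets remain in that family, which you do implicitly note.

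The genuine gap is in the first algebra, $H(2;(1,1);\Phi(\tau))^{(1)}$, at exactly the point you flag and then assert away: the claim that the correction term in $v_0$ ``makes $W$ close'' under $[p]$. It does not. Since $((p-1)!)^2\equiv 1 \pmod p$ one may write $v_0=(1-X^{p-1}Y^{p-1})\del_X$; a $p$-th power of a derivation is a derivation, so it suffices to evaluate on $X$ and $Y$. Clearly $v_0^p(Y)=0$, while iterating on $X$ gives $v_0^2(X)=X^{p-2}Y^{p-1}$ and inductively $v_0^{k}(X)=(p-2)(p-3)\cdots(p-k+1)\,X^{p-k}Y^{p-1}$ for $2\leq k\leq p$, whence by Wilson's theorem
\[
v_0^{[p]}=v_0^{p}=(p-2)!\,Y^{p-1}\del_X=Y^{p-1}\del_X=-Y^{(p-1)}\del_X\neq 0 .
\]
This element is not in the span of the displayed basis (every element of that span has $\del_X$-coefficient in $\la 1-X^{p-1}Y^{p-1},X,X^{(2)},\dots,X^{(p-1)}\ra$). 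It centralises $W$, as it must since $\ad(v_0^{[p]})=(\ad v_0)^p$ kills $W$, so the $p$-closure of $W$ is the $(p+1)$-dimensional central extension $W\oplus k\,Y^{p-1}\del_X$; indeed $Y^{p-1}\del_X$ lies in $H(2;(1,1);\Phi(\tau))\setminus H(2;(1,1);\Phi(\tau))^{(1)}$ and is one of the two vectors responsible for the minimal $p$-envelope being $(p^2+1)$-dimensional, as recorded earlier in the paper. Consequently the displayed set spans a Lie subalgebra isomorphic to $W_1$ — your parts (a) and (b) do hold, as does the whole twisted-Hamiltonian formalism with $u^{-1}=1-X^{(p-1)}Y^{(p-1)}$ — but it is \emph{not} a $p$-subalgebra of $W_2$, so part (c) of your plan does not merely ``require care'' here: it fails, and no bookkeeping rescues it for this choice of $v_0$. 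Note also that the paper supplies no proof to fall back on (it is ``left to the reader''), and since the failure attaches to the stated basis itself, any correct argument must either produce a different representative of $\del$ inside $H(2;(1,1);\Phi(\tau))^{(1)}$ with vanishing $p$-th power or amend the first assertion of the lemma.
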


Finally, the same technique used before yields the following.

\begin{lemma}\label{weirdh2repsrest}The restrictions of simple restricted modules   $L_H(r)$ for $H=H(2;(1,1);\Phi(\tau))^{(1)}$ or $H(2;(1,1);\Phi(1))$ to the subalgebra $W$ provided by Lemma \ref{w1subinweirdh} are as follows. We have $[L_H(0)|W]=L(0)$,  $[L_H(1)|W]=[\bigoplus_{r=0}^{p-2}L(r)\oplus L(p-1)^{ 2}]$ for $H=H(2;(1,1);\Phi(\tau))^{(1)}$,  $[L_H(1)|W]=[\bigoplus_{r=1}^{p-2}L(r)\oplus L(p-1)^{ 2}\oplus k^2]$ for $H(2;(1,1);\Phi(1))$ and \[[L_H(r)|W]=\left[\left(\bigoplus_{r=1}^{p-2}L(r)\oplus L(0)^2\oplus L(p-1)^2\right)^{(r+1)}\right].\] In particular every $p$-representation of $H$ restricted to $W$ contains the same number of composition factors of each $L(r)$ such that $1\leq r\leq p-2$.\end{lemma}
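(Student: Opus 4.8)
The plan is to run the three cases $r=0$, $r=1$ and $2\leq r\leq p-1$ exactly as in the proof of Lemma \ref{h2repsrest}, the only genuinely new feature being that for the deformation $H(2;(1,1);\Phi(\tau))^{(1)}$ the distinguished copy of $\del$ inside the subalgebra $W$ of Lemma \ref{w1subinweirdh} is no longer homogeneous. The case $r=0$ is immediate, since $L_H(0)$ is trivial. For $r=1$ I would compute directly with the explicit bases of Lemma \ref{w1subinweirdh}: restricting the adjoint action, one locates the vectors annihilated by $\del$, reads off the weight $\mu$ of $X\del$ on each, and records the factor $L(\mu-1)$ (respectively $L(p-1)$, $L(0)$ when $\mu=1,0$) by the recognition criterion of \S\ref{repsofW1}. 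A dimension count then forces the two stated answers, the discrepancy between them (one extra trivial factor for $H(2;(1,1);\Phi(1))$) being precisely the one-dimensional difference between the two simple algebras.

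For $2\leq r\leq p-1$ the module is $L_H(r)=M(r)=\Ind_{H(\geq 0)}^H\hat L(r)$ of dimension $(r+1)p^2$ by Lemma \ref{weirdh2reps}, and here I would pass to the associated graded. By Theorem \ref{fswthm} together with the isomorphism $\gr L\cong H(2;(1,1))$ recorded above, one has $\gr M(r)\cong \Ind_{u(\gr_+L,0)}^{u(\gr L,0)}(\hat L(r))$ as a graded $\gr L$-module, and the subalgebra $W$ has a homogeneous associated graded $\gr W\cong W_1$ inside $\gr L$ (the leading term of the twisted $\del$ is $\del_X$, while $X\del=X\del_X-Y\del_Y$ and all higher basis elements of $W$ are already homogeneous). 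On $\gr M(r)$ the element $\gr\del$ is homogeneous, so the situation is now identical to the graded Hamiltonian: taking the submodule spanned by the top $\sl_2$-weight vectors, grading it by the power of the generator representing $\del$, and feeding the resulting lists of $X\del$-weights into Algorithm \ref{wCompAlg} reproduces the computation of Lemma \ref{h2repsrest} essentially verbatim and gives $[\gr M(r)\,|\,\gr W]=(\bigoplus_{s=1}^{p-2}L(s)\oplus L(0)^2\oplus L(p-1)^2)^{(r+1)}$, as claimed.

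It remains to descend this from $\gr M(r)$ to $M(r)$, and this comparison is the one step requiring care and is where I expect the main obstacle to lie. The crucial point is that $X\del$ is homogeneous of degree $0$, so taking associated graded commutes with the $X\del$-action, while $\del$ has leading term exactly $\gr\del$ (for $\Phi(\tau)$ the correction $-X^{(p-1)}Y^{(p-1)}\del_X$ lies strictly deeper in the filtration than $\del_X$; for $\Phi(1)$ there is no correction at all, as $\del=\del_Y$ is homogeneous). Lifting a graded $\gr W$-composition series of $\gr M(r)$ through the filtration produces, by a routine descending induction on filtration degree, a filtration of $M(r)$ by $W$-submodules whose graded subquotients are the given simple $\gr W$-modules; each such subquotient $P$ is then simple over $W$, and if $P\cong L(\lambda)$ its unique $\del$-primitive vector has leading term a $\gr\del$-primitive vector of $\gr P\cong L(\mu)$ carrying the same $X\del$-weight, forcing $\lambda=\mu$ by the recognition criterion. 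Hence $[M(r)\,|\,W]=[\gr M(r)\,|\,\gr W]$ and the lemma follows. Without the homogeneity of $X\del$ one could not transport the identification of the $p$-dimensional factors $L(1),\dots,L(p-2)$ across this degeneration, and it is exactly the balanced multiplicities of these factors that feed the final ``in particular'' assertion.
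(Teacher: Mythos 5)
Your handling of $r=0$ and $r=1$ is essentially the same as the paper's (explicit vectors killed by $\del$, their $X\del$-weights, and a dimension count), and your appeal to Theorem \ref{fswthm} to identify $\gr M(r)$ is sound --- that is precisely how the paper proves irreducibility in Lemma \ref{weirdh2reps}. The gap is in your descent step for $2\leq r\leq p-1$, exactly where you anticipated trouble. The claim that a graded $\gr W$-composition series of $\gr M(r)$ can be lifted ``by a routine descending induction on filtration degree'' to a filtration of $M(r)$ by $W$-submodules with matching subquotients is false as a general principle, because $\gr$ only works in the opposite direction: if $0=M_0\subset M_1\subset\dots\subset M_k=M(r)$ is a $W$-composition series and each $M_i$ carries the induced filtration, then the $\gr M_i\subseteq\gr M(r)$ are graded $\gr W$-submodules with $\gr(M_i/M_{i-1})\cong\gr M_i/\gr M_{i-1}$, so $[\gr M(r)|\gr W]$ is a \emph{refinement} of $[M(r)|W]$: each filtered simple factor $S$ contributes the whole multiset $[\gr S|\gr W]$, which may have several members. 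Conversely, a graded submodule of $\gr M(r)$ need not equal $\gr N$ for any $W$-submodule $N$: lifting a homogeneous spanning set, the $W$-action produces correction terms in deeper filtration degrees which the candidate subspace has no reason to contain. So the lifted filtration you need does not exist in general, and the subquotients $P$ on which your weight-matching argument operates are not available.

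This gap is material, because the refinement ambiguity is invisible to the only invariant you transport across the degeneration. A $p$-dimensional simple factor $L(\lambda)$, $1\le\lambda\le p-2$, of $[M(r)|W]$ could a priori satisfy $[\gr L(\lambda)|\gr W]=\{L(0),L(p-1)\}$; and $L(\lambda)$ and $L(0)\oplus L(p-1)$ have \emph{identical} $X\del$-weight multisets (every weight in $\F_p$ exactly once), so homogeneity of $X\del$ cannot separate them. Hence from $[\gr M(r)|\gr W]=\bigl(\bigoplus_{s=1}^{p-2}L(s)\oplus L(0)^2\oplus L(p-1)^2\bigr)^{(r+1)}$ alone you cannot exclude, say, $[M(r)|W]=\bigl(\bigoplus_{s=1}^{p-2}L(s)\oplus L(\lambda_1)\oplus L(\lambda_2)\bigr)^{(r+1)}$, which has the correct dimension and refines correctly --- and which would destroy exactly the balanced-multiplicity conclusion (``in particular \dots'') that the proof of Theorem \ref{onlyW1s} relies on. This is why the paper never leaves $M(r)$: as in Lemma \ref{h2repsrest}, it works with the monomial basis of the induced module, exhibits explicit $W$-stable subquotients carrying an honest grading with $\del$ acting in degree $2$ and $X\del$ in degree $0$, and applies Proposition \ref{wCompAlg} to those. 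To repair your route you would have to prove that $\gr S$ is simple over $\gr W$ for every composition factor $S$ of $M(r)|W$ (equivalently, that the two multisets have the same cardinality), which is essentially the statement you are trying to establish.
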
 
\begin{proof}The case $r=0$ is easy. For $r=1$, we analyse the action of $h=X\del_X-Y\del_Y$ on vectors killed by $\del$ in each case. 

In the first case, $\del=(1-X^{(p-1)}Y^{(p-1)})\del_X$. Each of the vectors $Y^r\del_X$ for $0\leq r\leq p-2$ in $H$ are killed by $\del$ and one gets one of each weight from $0$ to $p-2$. Thus there must be at least one composition factor of each type in $L_H(1)|W$. This accounts for $p^2-p$ dimensions of $L_H(1)$ and there remain $p-1$ to find. However, the $0$ weight space for $h$ is accounted for so we must have one copy of $L(p-1)$ remaining. 

In the second case, each of the vectors $X^r\del_Y$ for $0\leq r\leq p-1$ are killed by $\del$ with one of each weight from $0$ to $p-2$ occurring. Further, one checks that the span of $\{X^{(p-1)}\del_Y,X^{(p-1)}Y\del_Y-\del_X,X^{(p-1)}Y^{(2)}\del_Y-Y\del_X,\dots,X^{(p-1)}Y^{(p-1)}\del_Y-Y^{(p-2)}\del_X\}$ is a $p$-dimensional $W$-submodule of $\g$. This contains a further  trivial submodule spanned by the vector $X^{(p-1)}\del_Y$, hence has the structure $L(p-1)/k$, isomorphic to $O_1$ as a $W_1$-module. Counting up the dimensions now found, there is just one left, which must correspond to a trivial composition factor.

The case of the Verma modules $M(r)$ is similar to that in Lemma \ref{h2repsrest}.\end{proof}

\subsection{GAP calculations} \label{GAP-calcs}
At various points, notably for some intricate calculations performed in Appendix \ref{appendix}, we use the routines included in the standard GAP distribution for computing with Lie algebras. See \cite[Manual, Chapter 64]{GAP} for complete details. While many, possibly all of our calculations could be attempted by hand, this reduces time and error. 

A very straightforward use of GAP we employ is to use its database of root systems to evaluate a cocharacter on a root, thereby finding the weight of the cocharacter on a corresponding root vector for which we have written a function {\tt findweights}. For example, if $\g=E_6$, $\OO=A_4$ then \cite{LT11} notates an associated cocharacter in terms of certain elements of a maximal torus as $\tau:\begin{smallmatrix}2&2&2&-6&0&\\&&2\end{smallmatrix}$. Thus we may compute
\begin{verbatim}gap> T:=[2,2,2,2,-6,0];                                         
[ 2, 2, 2, 2, -6, 0 ]
gap> findweights(T,4);
[ [ 1, 0, 1, 0, 0, 0 ], [ 0, 1, 0, 1, 0, 0 ], [ 0, 0, 1, 1, 0, 0 ], 
  [ 1, 1, 1, 2, 1, 0 ], [ 1, 1, 1, 2, 1, 1 ], [ 1, 2, 2, 3, 2, 1 ], 
  [ 0, 0, 0, -1, -1, 0 ], [ 0, 0, 0, -1, -1, -1 ], [ 0, -1, -1, -2, -2, -1 ] ]\end{verbatim}
Thus one concludes that $\dim\g(4)=9$ and a basis of $\g(4)$ is \[\left\{e_{\begin{smallmatrix}1&1&0&0&0\\&0&&&\end{smallmatrix}},
e_{\begin{smallmatrix}0&0&1&0&0\\&1&&&\end{smallmatrix}},
e_{\begin{smallmatrix}0&1&1&0&0\\&0&&&\end{smallmatrix}},
e_{\begin{smallmatrix}1&1&2&1&0\\&1&&&\end{smallmatrix}},
e_{\begin{smallmatrix}1&1&2&1&1\\&1&&&\end{smallmatrix}},
e_{\begin{smallmatrix}1&2&3&2&1\\&2&&&\end{smallmatrix}},
e_{-\begin{smallmatrix}0&0&1&1&0\\&0&&&\end{smallmatrix}},
e_{-\begin{smallmatrix}0&0&1&1&1\\&0&&&\end{smallmatrix}},
e_{-\begin{smallmatrix}0&1&2&2&1\\&1&&&\end{smallmatrix}}\right\}.\]

Very frequently, we will wish to compute the Lie bracket of two expressions of the form $\sum_\alpha\lambda_\alpha e_\alpha+\sum_\beta\mu_\beta h_\beta$, where the $e_\alpha$ are root vectors in $\g$, and the $h_\beta$ are elements from a basis of a maximal torus defining the root system of $\g$. The $\lambda_\alpha$ and $\mu_\beta$ are treated as scalar indeterminates. For example, if $x$ is an expression of the above form, and $y$ is some fixed element, then calculating $[x,y]$ and insisting that it is zero will put conditions amongst the $\lambda_\alpha$ and $\mu_\beta$. (Such calculations can of course be done by hand, since the bracket of any pair of basis elements is given by structure constants, which can be deduced from the root system.) To do this in GAP we set up a polynomial ring $R$ in a large enough number (e.g. $\dim\g$) indeterminates and then work with $\g(R)$. Note that in GAP, the `canonical' basis $B$ of a simple (classical) Lie algebra $\g$ in GAP is arranged so that (i) the last $\rk\g$ elements are a basis for a maximal torus; (ii) the first $|\Phi^+|$ elements are positive root vectors; (iii) the first $\rk\g$ elements are simple root vectors; (iv) if $r\leq |\Phi^+|$ then the $r+|\Phi^+|$th element of $B$ is a root vector corresponding to the negative of the root corresponding to the $r$th element of $B$.

For example if $\g=E_8$, and $e$ is a nilpotent element of type $A_4$, then the following calculates the bracket of $e$ with a general element of the maximal torus $\mu_1h_1+\dots +\mu_8h_8\in\t$.
\begin{verbatim}gap> g_rat := SimpleLieAlgebra("E",8,Rationals);;
gap> R := PolynomialRing(Rationals,Dimension(g_rat));;
gap> g := SimpleLieAlgebra("E",8,R);;
gap> x := IndeterminatesOfPolynomialRing(R);;
gap> B := Basis(g);;
gap> e:=B[1]+B[2]+B[3]+B[4]; 
v.1+v.2+v.3+v.4
gap> y:=x[241]*B[241]+x[242]*B[242]+x[243]*B[243]+x[244]*B[244]+x[245]*B[245]
> +x[246]*B[246]+x[247]*B[247]+x[248]*B[248];
(x_241)*v.241+(x_242)*v.242+(x_243)*v.243+(x_244)*v.244+(x_245)*v.245+(x_246)*\
v.246+(x_247)*v.247+(x_248)*v.248
gap> e*y;
(-2*x_241+x_243)*v.1+(-2*x_242+x_244)*v.2+(x_241-2*x_243+x_244)*v.3+(x_242+x_2\
43-2*x_244+x_245)*v.4
\end{verbatim}
We have also implemented a routine which will make substitutions in general elements in order to make a certain expression be zero. For example, if one wanted to calculate $\c_\t(e)$, one could insist that the last expression in the above output be zero. Thus we might choose to calculate the substitution into $\tt y$ of $\tt x\_243=2*x\_241$, $\tt x\_244=2*x\_242$ and so on, a process which this algorithm automates.

\section{Finding $W_1$ subalgebras: Proof of Theorem \ref{necForW1s}.}
Recall that $G$ is an exceptional simple algebraic group over an algebraically closed field of good characteristic $p$, with $\g$ its Lie algebra.
In this section (together with Appendix \ref{appendix}) we prove Theorem \ref{necForW1s} by means of a series of lemmas. Proposition \ref{findingcoch} will be quite central in proving the conjugacy statements involved: that is part (iv) of Theorem \ref{necForW1s}. For this we need the following two lemmas.

\begin{lemma}\label{findingtorus}

Let $e\in \g=\Lie(G)$  be a $p$-nilpotent element and let $\h:=\la e \ra$ be the subspace it generates. Then any torus $\c\subseteq\g$ normalising $\h$ can be written $\c=\Lie(C)$ for $C$ a torus of $G$ normalising $\h$.\end{lemma}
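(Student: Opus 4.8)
The plan is to read off a torus of $G$ from the toral $\F_p$-structure of $\c$, and then to arrange, using that $p$ is good, that the line $\h$ is stable under it.

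First I would unwind the hypothesis. Since $\c$ is abelian and normalises $\h=\la e\ra$, the element $e$ is a common eigenvector: there is a linear form $\lambda\colon\c\to k$ with $[t,e]=\lambda(t)e$ for all $t\in\c$. Because $\c$ is a torus it has a basis of toral elements, spanning an $\F_p$-form $\c_0$ with $\c=\c_0\otimes_{\F_p}k$; and for toral $t$ (so $t^{[p]}=t$) the restricted identity $\ad(t)^p=\ad(t^{[p]})=\ad(t)$ forces $\lambda(t)^p=\lambda(t)$, i.e.\ $\lambda(\c_0)\subseteq\F_p$. Thus $\lambda$ is the reduction of an integral weight, which is exactly what will allow it to integrate to a character.

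Next I would produce an integral torus. In good characteristic every torus of $\g$ lies in $\Lie(T)=\t$ for some maximal torus $T$ of $G$ (the maximal toral subalgebras of $\g$ are precisely the $\Lie(T)$, and any torus extends to a maximal one); fix such a $T$ with $\c\subseteq\t$ and use $\t\cong X_*(T)\otimes_\Z k$. Relative to $T$ we have $\g=\t\oplus\bigoplus_{\alpha\in\Phi}\g_\alpha$, and since $e$ is a $\c$-eigenvector it is supported on the root spaces $\g_\alpha$ with $\alpha$ in the set $S:=\{\alpha\in\Phi: d\alpha|_{\c}=\lambda\}$ (together with a toral summand only when $\lambda=0$, which is then centralised anyway). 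A subtorus $C\le T$ corresponds to a saturated sublattice $\Lambda=X_*(C)\le X_*(T)$ with $\Lie(C)=\Lambda\otimes_\Z k$; lifting an $\F_p$-basis of $\c_0$ to primitive vectors produces $\Lambda$ with $\Lie(C)=\c$. To force $C$ to normalise $\h$ I need in addition that $\alpha|_C=\beta|_C$ for all $\alpha,\beta\in S$, equivalently $\Lambda\subseteq M^{\perp}$ where $M=\la\alpha-\beta:\alpha,\beta\in S\ra\le X^*(T)$; then $e$ becomes an honest $C$-weight vector and $C\cdot\h=\h$.

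The main obstacle is exactly this last integrality point: from the hypothesis we know only that $\c_0$ is orthogonal to $M$ after reduction modulo $p$, and lifting this to a saturated $\Lambda\subseteq M^{\perp}$ with $\Lambda\otimes_\Z k=\c$ is guaranteed precisely when $M$ is $p$-saturated, i.e.\ $X^*(T)/M$ has no $p$-torsion (so that $\overline{M^{\perp}}=(\overline{M})^{\perp}\supseteq\c_0$, where bars denote reduction modulo $p$). This is where good characteristic is essential: for the exceptional types and good $p$ the fundamental group has order prime to $p$, so $\Z\Phi$ is $p$-saturated in $X^*(T)$, and since every coefficient of a root is strictly smaller than any good $p$, distinct roots remain distinct modulo $pX^*(T)$ -- the basic case from which the $p$-saturation of $M$ follows. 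As a reassuring special case, when $\lambda=0$ the torus $\c$ lies in $\g_e=\Lie(G_e)$ (using smoothness of centralisers in good characteristic) and one simply integrates $\c$ to a torus of $G_e$, which then centralises $e$; the content of the lemma is the uniform treatment of $\lambda\neq 0$ by the weight-lifting above.
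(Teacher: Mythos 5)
Your reduction is set up correctly up to the decisive point: fixing a maximal torus $T$ with $\c\subseteq\Lie(T)$, identifying subtori of $T$ with saturated sublattices of $X_*(T)$, and observing that a subtorus $C\le T$ with $\Lie(C)=\c$ normalising $\h$ exists precisely when $\c_0$ lies in $\overline{M^\perp}$, where $M=\la\alpha-\beta:\alpha,\beta\in S\ra$. But the proof of exactly that point is a non sequitur. You argue that since distinct roots stay distinct modulo $p$, the lattice $M$ is $p$-saturated. Pairwise distinctness (or primitivity) of the \emph{generators} of a lattice says nothing about $p$-torsion in the quotient: in $\Z^2$ the lattice $\la(1,2),(1,-5)\ra$ has index $7$ although both generators are primitive and distinct mod $7$. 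Worse, the claim itself is false for subsets of roots of these root systems at good primes. Take $p=7$, $\g$ of type $E_7$, and let $S=\{\beta_0,\beta_1,\dots,\beta_6\}$ be the simple roots of an $A_6$ subsystem together with its lowest root $\beta_0=-(\beta_1+\cdots+\beta_6)$. These seven roots are pairwise distinct modulo $7$, yet in the basis $\beta_1,\dots,\beta_6$ the generators $\beta_i-\beta_0$ of $M$ form the matrix $I+J$ ($J$ the all-ones matrix), of determinant $7$; so $M$ has index $7$ in the ($7$-saturated) $A_6$ root lattice, $X^*(T)/M$ has $7$-torsion, and $(\overline{M})^{\perp}$ is strictly larger than $\overline{M^{\perp}}$. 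Indeed the toral element $\mathrm{diag}(0,1,\dots,6)\in\sl_7$ acts on every $\beta_i$ with common eigenvalue $-1\neq 0$, but no cocharacter of $T$ pairs equally and nontrivially with all of $S$, since $\sum_i\la\beta_i,\mu\ra=0$ forces the common integral value to vanish. So for this $S$ your construction cannot produce $C$ at all.

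What rescues the lemma is that such a ``cyclic'' configuration never supports a $p$-nilpotent element: in $\sl_7$ the element $\sum_i c_iE_{i,i+1}+c_0E_{7,1}$ has characteristic polynomial $X^7-\prod c_i$, hence is nilpotent only if some coefficient vanishes. In other words, the $p$-nilpotency of $e$ must be used in an essential way at precisely this step, and your argument never invokes it in the main case $\lambda\neq0$ (it appears only through the word ``support''). Until you prove that the support of a genuine $p$-nilpotent element admitting a common nonzero $\c$-eigenvalue cannot create $p$-torsion in $X^*(T)/M$ --- which is the real content of the lemma in your formulation, and is exactly the $A_{p-1}$ phenomenon the paper has to handle delicately elsewhere (Lemma 3.2, Proposition 3.3(iii)) --- the proof is incomplete. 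Note the contrast with the paper's route: it shows the normaliser $N_G(\h)$ is a \emph{smooth} group scheme (using existence of associated cocharacters and smoothness of $G_e$ in very good characteristic), then applies Humphreys' integration theorem and Dieudonn\'e's result inside a maximal torus of $N_G(\h)$ itself, so every subtorus obtained automatically normalises $\h$ and no integrality question over the root lattice ever arises.
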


\begin{proof}

We first prove that the normaliser $N_{G}(\h)$ of $\h$ in $G$ is smooth: 
By the existence of associated cocharacters, there is a one-dimensional torus
$S$ which normalises $\h = \la e\ra$ but does not centralise it.
Differentiating this torus, we get also a $1$-torus $\s\subseteq\g$ which normalises $e$ but does not centralise $e$.
We may calculate the dimension of $\n_{\g}(\h)$ by looking at its action on $\h$. By rank--nullity, we have $\dim \n_{\g}(\h)=\dim \c_{\g}(\h)+1$. Equally, we may calculate the dimension of $N_{G}(\h)$ by looking at its action on $\h$. Again, we have $\dim N_{G}(\h)=\dim C_{G}(\h)+1$.
But since $p$ is very good, $C_G(\h) = G_e$ is smooth, so $\dim C_{G}(\h)=\dim \c_{g}(\h)$. 
Hence the dimensions of the group--theoretic and Lie--theoretic normalisers coincide and the normaliser is smooth as required.

If $H$ is any smooth algebraic group, then \cite[Theorem 13.3]{Hum67} shows that any maximal torus $\t$ of $\h$ can be written $\Lie(T)$ for $T$ a maximal torus of $H$.
In particular, this applies to $N_G(\h)$.
We may choose an embedding $N_G(\h) \subseteq \GL_n$ with $\c\subseteq\t$ diagonal.  Now \cite[Prop.\ 2]{Dieu:1953} gives that $\c=\Lie(C)$ for $C\subseteq T\subseteq N_G(\h)$.
\end{proof}

\begin{lemma}\label{adecapge}Suppose $e$ is a nilpotent element in $\g$, distinguished in a Levi subalgebra $\l = \Lie(L)$.
Then $\im\ad e\cap\g_e(0)=0$ unless $L$ has a factor of type $A_{p-1}$.
In the remaining eight orbits, the intersections are given in Table \ref{t:adecapg}.
\begin{table}\begin{center}\begin{tabular}{l|l|l|l}
$\g$ & $p$ & $\OO$ & $\g_e(0)\cap\g$\\\hline
$E_6$ & $5$ & $A_4$ & $\z(\l')$\\
$E_6$ & $5$ & $A_4A_1$ & $\z(\l')$\\
$E_7$ & $5$ & $A_4$ & $\z(\l')$\\
$E_7$ & $5$ & $A_4A_1$ & $\z(\l')$\\
$E_7$ & $5$ & $A_4A_2$ & $\g_e(0)\cong A_1$\\
$E_7$ & $7$ & $A_6$ & $\g_e(0)\cong A_1$\\
$E_8$ & $7$ & $A_6$ & $A_1\subseteq \g_e(0)\cong A_1^2$\\
$E_8$ & $7$ & $A_6A_1$ & $\g_e(0)\cong A_1$
\end{tabular}\end{center}\caption{\label{t:adecapg}Non-trivial intersections of $\im\ad e$ with $\g_e(0)$.}\end{table}\end{lemma}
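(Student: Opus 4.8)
The plan is to analyze the intersection $\im\ad e\cap\g_e(0)$ via the $\tau$-grading attached to $e$, where $\tau$ is a cocharacter associated to $e$ as in Section 2.2. The key structural input is that $\ad e$ maps $\g(i)$ into $\g(i+2)$, and $e\in\g(2)$. Since $\g_e(0)=\Lie(C_e)$ is the degree-$0$ part of the centraliser, I would first observe that $\im\ad e$ lands in $\g(\geq 4)$ is false in general; rather $\ad e$ applied to $\g(-2)$ can land in $\g(0)$, so the relevant piece of the image that can meet $\g_e(0)$ is precisely $\ad e\bigl(\g(-2)\bigr)\subseteq\g(0)$. Thus the first reduction is to identify $\im\ad e\cap\g_e(0)$ with the image of $\g(-2)$ under $\ad e$ intersected with $\g_e(0)$, reducing the whole question to a computation inside the degree-$0$ piece $\g(0)=\Lie(C_e\cdot R_u)$ together with the adjacent graded pieces.

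\textbf{The general vanishing statement.}
To prove that the intersection is zero away from the listed orbits, I would argue representation-theoretically. The Levi factor $\g(0)$ is reductive, and $e\in\g(2)$ defines (together with an $\sl_2$ or rather the associated cocharacter) a map whose cokernel and kernel on each graded piece are governed by the $C_e$-module structure recorded in \cite{LT11}. Concretely, an element of $\g_e(0)$ lies in $\im\ad e$ exactly when there is $x\in\g(-2)$ with $[e,x]\in\g_e(0)$ and $[e,x]$ nonzero; but $[e,x]\in\g_e(0)$ forces $[e,[e,x]]=0$, i.e. $x$ must be annihilated by $(\ad e)^2$. For a regular-in-$\l$ nilpotent, $\ad e$ acts on each $\sl_2$-string with a single Jordan block, so $(\ad e)^2$ has no kernel on $\g(-2)$ unless a string has length $\leq 2$, which is exactly a dimension/Coxeter-number coincidence. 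I expect the clean criterion to be that the intersection is forced to vanish unless the $\tau$-grading on some factor has a "short string" at the top, and this short-string phenomenon is exactly what the type-$A_{p-1}$ hypothesis and the eight exceptional orbits detect. The appearance of type $A_{p-1}$ is natural since there $p=h$ (by Theorem \ref{necForW1s}(ii)), forcing the principal $\sl_2$-string to wrap around modulo $p$.

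\textbf{The exceptional orbits and the main obstacle.}
For the eight exceptional orbits listed in Table \ref{t:adecapg}, the vanishing fails and I must instead compute the intersection explicitly. Here I would use the GAP setup of Section \ref{GAP-calcs}: fix the orbit representative $e$ and associated cocharacter $\tau$ from \cite{LT11}, compute the graded pieces $\g(-2),\g(0),\g(2)$ by evaluating $\tau$ on roots via \texttt{findweights}, and then compute the image $\ad e(\g(-2))$ and intersect with the explicitly known $\g_e(0)$. The table entries $\z(\l')$ and the various $A_1$ factors would emerge directly from identifying which elements of $\g_e(0)$ arise as brackets $[e,x]$. The main obstacle is the careful bookkeeping in the five orbits involving $A_4$-type factors at $p=5$ and the $A_6$-type factors at $p=7$: here the borderline $p=h$ or $p=h+1$ makes the Jordan block structure of $\ad e$ on the top strings degenerate, so one cannot simply invoke the generic single-Jordan-block argument and must instead verify by direct computation that the surviving intersection is exactly the claimed central or $A_1$ piece. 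I anticipate that matching the abstract description of $C_e$ and $Z(C_e)^\circ$ from \cite{LT11} with the concrete root-vector computation is the delicate step, and that confirming the inclusion $A_1\subseteq\g_e(0)\cong A_1^2$ for the $E_8$, $A_6$ case (rather than the full $A_1^2$) will require identifying precisely which of the two $A_1$ factors meets the image.
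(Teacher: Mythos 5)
Your reduction to grade $-2$ --- namely that $\im\ad e\cap\g_e(0)=[e,\g(-2)]\cap\g_e(0)$, and that any nonzero element of it must be $[e,x]$ for some $x\in\ker(\ad e)^2\cap\g(-2)$ --- is correct, and your plan for the eight exceptional orbits (take a generic element of $\g(-2)$, apply $\ad e$, and impose $[e,v]=0$ using the data of \cite{LT11} in GAP) is precisely the computation the paper performs. The genuine gap is in the other half of the lemma: the vanishing of the intersection when $L$ has no factor of type $A_{p-1}$. The paper does not argue this at all --- it cites \cite[p57]{Jan04} --- whereas your substitute argument rests on the claim that ``$(\ad e)^2$ has no kernel on $\g(-2)$ unless a string has length $\leq 2$,'' and that claim is wrong. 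When the associated $\sl_2$-theory applies (all Jordan blocks of $\ad e$ of size $<p$), a string of length $m$ is forced to occupy grades $-(m-1),-(m-3),\dots,m-1$; a string of length $\leq 2$ then meets grade $-2$ in nothing at all, so it is irrelevant, while a longer string meets grade $-2$, if at all, in a vector on which $(\ad e)^2$ is nonzero. The genuine failure mode is different: it is a string of length $\geq 2$ whose \emph{top} vector lies in grade $0$, for then that top vector is a nonzero element of $\g_e(0)\cap\im\ad e$. Such asymmetric placement is excluded for blocks of size $<p$ by the $\sl_2$-theory, so it can only come from Jordan blocks of size exactly $p$; and the real content of the lemma --- which your sketch only says you ``expect'' --- is that size-$p$ blocks with top in grade $0$ occur if and only if $\l$ has an $A_{p-1}$ factor. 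Note that a size-$p$ block may also sit symmetrically, so this is not settled merely by knowing which orbits produce size-$p$ blocks; it requires either Jantzen's argument or a case-by-case check, neither of which your proposal supplies.

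A secondary but real issue: the hypothesis of the lemma is that $e$ is \emph{distinguished} in $\l$, not regular in $\l$, and your string argument is phrased only for regular elements. The non-regular distinguished orbits (e.g.\ $D_4(a_1)$, $F_4(a_3)$, $E_8(a_7)$) must also be shown to give zero intersection, and your argument as stated does not apply to them. So in your proposal the vanishing half of the lemma remains unproved, while the exceptional-orbit half coincides with the paper's proof; the quickest repair is to do what the paper does and invoke \cite[p57]{Jan04} for the vanishing statement.
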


\begin{proof} Unless $\OO$ is one of the exceptional orbits, this is stated in \cite[p57]{Jan04}. For the remainder, one simply takes a general element in $\g(-2)$ and applies $e$ to it to get a general element $v$ in $\im \ad e\cap\g(0)$. Then insisting that the general element satisfies $[e,v]=0$ gives the data above.
See Section \ref{GAP-calcs} on how such calculations can be carried out
with the help of GAP.\end{proof}

We are now in a position to prove the key result about finding suitable
cocharacters associated to a nilpotent element $e$.
The toral element $J$ will later be taken to equal $X\del \in W_1$.

\begin{prop}\label{findingcoch} Suppose $e\in\g$ is a nilpotent element and let $J=J^{[p]}$ be a toral element of $\g$ normalising but not centralising $\la e\ra$. Let $\chi$ be an associated cocharacter to $e$ and let $\g(i)$ be the associated $i$-th graded piece of $\g$. Then the following hold:
\begin{itemize}
\item[(i)] $J$ is conjugate by an element $g$ of $R_e=R_u(G_e)$ to an element of $\g(0)$; thus replacing $\chi$ by its conjugate by $g^{-1}$ and taking the new associated grading $\g(i)$, we may assume $J$ normalises $\g(i)$ for each $i\in\Z$. 

\item[(ii)] There exists a cocharacter $\tau$ associated to $e$ and a toral element
$H$ with $\Lie(\tau(\Gm)) = \la H \ra$ such that
$J = H + H_0$ for some toral element $H_0 \in \g_e(0)$.

\item[(iii)]  Suppose $e$ is not in an orbit containing a factor of type $A_{p-1}$, and that $J$ is in the image of $\ad e$. Then there is a cocharacter $\tau$ associated to $e$ with $\Lie(\tau(\Gm))=\la J\ra$.
\end{itemize}
\end{prop}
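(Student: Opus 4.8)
\emph{Overview and the key step (i).} The three parts build on one another: (i) is the substantive group-theoretic step that places $J$ in a Levi subalgebra of the $\chi$-grading, while (ii) and (iii) are then largely formal manipulations of toral elements, with the single geometric input of Lemma \ref{adecapge} entering only at the very end. For (i) the plan is to analyse $N:=N_G(\la e\ra)$, which we already know is smooth from the proof of Lemma \ref{findingtorus}. By Lemma \ref{findingtorus} the line $\la J\ra$ integrates to a one-dimensional torus $C\subseteq N$ with $\Lie(C)=\la J\ra$. I would use the given Levi decomposition $G_e=C_e R_e$ together with the fact that $\chi(\Gm)$ normalises $\la e\ra$, hence normalises $G_e=C_G(e)$ and its characteristic subgroup $R_e$, while centralising $C_e\subseteq L=C_G(\chi(\Gm))$. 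First I would check that $N^\circ=(C_e\,\chi(\Gm))\ltimes R_e=:M\ltimes R_e$ is a Levi decomposition: $G_e$ is the kernel of the scaling character $\psi\colon N\to\Gm$, the cocharacter $\chi(\Gm)$ surjects onto $\Gm$ under $\psi$, so $R_e=R_u(N^\circ)$ and the reductive group $M$ has dimension $\dim C_e+1$ complementary to $R_e$. Crucially $M$ centralises $\chi(\Gm)$, so $M\subseteq L$ and $\Lie(M)\subseteq\g(0)$. It then remains to observe that any torus of a group of the form $M\ltimes R_e$ is $R_e$-conjugate into $M$: if $C_M\subseteq M$ is the torus corresponding to the image of $C$ in $N^\circ/R_e\cong M$, then $C$ and $C_M$ are two maximal tori of the connected solvable preimage $C_M\ltimes R_e$, hence conjugate by an element $g\in R_e$. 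This gives $\Ad(g)J\in\Lie(M)\subseteq\g(0)$; replacing $\chi$ by $g^{-1}\chi g$, still associated to $e$ since $g\in G_e$, puts $J$ in the degree-zero Levi, which normalises each $\g(i)$.

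\emph{Part (ii).} Assume now $J\in\g(0)=\Lie(L)$, and write $[J,e]=ce$ with $c\in\F_p^{\times}$, nonzero because $J$ does not centralise $e$. Let $H_\chi:=d\chi(1)$ be the grading element; it is toral, satisfies $[H_\chi,e]=2e$, and is central in $\g(0)$ because $\chi(\Gm)$ is central in $L$. Hence $J$ and $H_\chi$ commute. Setting $H:=(c/2)H_\chi$ and $H_0:=J-H$, one computes $[H_0,e]=ce-ce=0$, so $H_0\in\g_e(0)$; and since $J$ and $H_\chi$ are commuting toral elements and $c/2\in\F_p$, the $p$-operation is additive on the span they generate, giving $H_0^{[p]}=J^{[p]}-(c/2)H_\chi^{[p]}=H_0$, so $H_0$ is toral. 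As $H$ spans the line $\Lie(\chi(\Gm))$, the decomposition $J=H+H_0$ is exactly the assertion, with $\tau=\chi$.

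\emph{Part (iii).} Here the extra ingredient is Lemma \ref{adecapge}: the hypothesis that $e$ lies in no orbit with a factor of type $A_{p-1}$ forces $\im\ad e\cap\g_e(0)=0$ (the eight exceptional orbits of that lemma are precisely those with an $A_{p-1}$ factor). Continuing from (ii), the associated $\sl_2$-triple $(e,H_\chi,f)$ gives $H_\chi=[e,f]\in\im\ad e$, whence $H=(c/2)H_\chi=[e,(c/2)f]\in\im\ad e$. By hypothesis $J\in\im\ad e$, so $H_0=J-H\in\im\ad e$; but also $H_0\in\g_e(0)$ by (ii), so $H_0\in\im\ad e\cap\g_e(0)=0$. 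Therefore $J=H$ spans $\Lie(\chi(\Gm))$ and $\tau=\chi$ works.

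\emph{Main obstacle.} The delicate point lies entirely in (i): verifying that $N^\circ$ really does carry the Levi decomposition $M\ltimes R_e$ with $\Lie(M)\subseteq\g(0)$, and that an arbitrary torus is conjugate into $M$ by an element of $R_e$ rather than merely of $N^\circ$. Once this is established, parts (ii) and (iii) reduce to the additivity of the $p$-operation on commuting toral elements and a single application of Lemma \ref{adecapge}.
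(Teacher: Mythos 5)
Your parts (i) and (ii) are correct and are essentially the paper's own argument in different packaging: the paper also integrates $\la J\ra$ to a torus via Lemma \ref{findingtorus}, uses the decomposition $G_e=C_eR_e$, and conjugates that torus into $T_2C_e$ (your $M$, with $T_2=\chi(\Gm)$) by an element of $R_e$ --- it phrases this through the coset identity $T_1G_e=T_2G_e$ and the projection $T_2C_eR_e\to T_2C_e$ rather than through a Levi decomposition of $N_G(\la e\ra)^\circ$, but the content is the same. Likewise your (ii) is the paper's computation ($H$ a suitable scalar multiple of $d\chi(1)$ chosen so that $[H,e]=[J,e]$, whence $J-H\in\g_e(0)$); your explicit verification that $H_0$ is toral, via additivity of the $p$-power map on commuting toral elements, is a detail the paper leaves implicit.

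The one genuine issue is in (iii), where you invoke ``the associated $\sl_2$-triple $(e,d\chi(1),f)$'' as though its existence were standard. In characteristic $p$ it is not: this proposition is applied with $p$ as small as $5$ and $7$ in groups of Coxeter number up to $30$, far below any Jacobson--Morozov bound, and even when some $\sl_2$-triple through $e$ exists there is no a priori reason its semisimple member should span $\Lie(\chi(\Gm))$ for an associated cocharacter $\chi$. That compatibility is exactly the content of McNinch's theorem on optimal $\SL_2$-homomorphisms, which is what the paper cites at precisely this point (\cite[Prop.\ 33]{McN05}, together with a temporary conjugation by an element of $G_e$ to arrange that the resulting cocharacter equals $\chi$); note also that such homomorphisms require $e^{[p]}=0$, which holds in all the paper's applications though it is not among the proposition's stated hypotheses. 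Once this input is supplied, your claim $d\chi(1)=[e,f]\in\im\ad e$ is justified, and the remainder of your (iii) --- $H_0\in\im\ad e\cap\g_e(0)=0$ by Lemma \ref{adecapge}, whose eight exceptional orbits are indeed exactly those with an $A_{p-1}$ factor --- coincides with the paper's proof.
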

\begin{proof}By Lemma \ref{findingtorus} there is a torus $T_1\leq G$ such that $\Lie(T_1)=\la J\ra$, with $T_1$ normalising $\la e\ra$.
Since $J$ acts non-trivially on $e$ so does $T_1$. Let $T_2=\chi(\Gm)$.
Then for each $t_1\in T_1$ there exists $t_2\in T_2$ such that $t_1=t_2.s$ with $s\in G_e$. Thus $T_1G_e=T_2G_e$. Write $G_e=C_eR_e$ with $C_e$ reductive.
Then $T_2C_e$ is a subgroup of $T_2G_e$ and we may take the image $\bar T_1$ of $T_1$ in $T_2C_e$ under the projection $T_2C_eR_e\to T_2C_e$.
Now $T_1\cap R_e=\{1\}$ so that $T_1 \subseteq \bar T_1 R_e$
is a complement to $R_e$.
Since $R_e$ is unipotent, $T_1$ is conjugate to $\bar T_1$ by an element of $R_e$. Thus $\Lie(T_1)=\la J\ra$ is conjugate to a subalgebra of $\Lie(T_2C_e)\leq \g(0)$ as required. For the last part of (i) observe that $J^g\in\g(0)$ means that $\chi(t)gJg^{-1}\chi(t)^{-1}=gJg^{-1}$. Thus $\chi(t)^{g^{-1}}J\chi(t)^{-g^{-1}}=J$. This proves (i).

For (ii), by part (i) we may assume that $J$ belongs to $\g(0)$.
Take $T_2$ as above and let $\Lie T_2=\la H\ra$ with $H$ chosen so that $[H,e]=[J,e]$.
Thus $J-H$ is an element in the zero-grade of the centraliser $\g_e(0)$,
so we may write $J=H+H_0$ with $H_0\in\g_e(0)$.

Hence $\tau = \chi$ satisfies the assertions in (ii).

For (iii), we claim that $H$ and $H_0$ are in the image of $\ad e$. For $H$, we may consider an optimal $\SL_2$-homomorphism $\varphi: \SL_2 \rightarrow G$ associated to $e$, see \cite[Prop.\ 33]{McN05}. By definition, $d \varphi$ maps the nilpotent
$\left ( \begin{smallmatrix} 0 & 1 \\ 0 & 0 \end{smallmatrix} \right )$ onto $e$, and the
map $t \mapsto \varphi 
\left ( \begin{smallmatrix}
t & 0 \\
0 & t^{-1} \end{smallmatrix} \right )$ is a cocharacter associated to $e$. 
Temporarily replacing everything by a conjugate by an element of $G_e$ we may assume that this cocharacter is equal to $\chi$. Thus $\la H\ra=\Lie(\chi(\Gm))$ and $H$ is in the image of $\ad e$. Thus $H_0$ is in the image of $\ad e$ also.  

Now, since the orbit type of $e$ does not have a factor of type $A_{p-1}$, by Lemma \ref{adecapge} we must have $H_0=0$. Thus $J=H$ and we may take $\tau=\chi$. This proves (iii).

\end{proof}

\begin{lemma} \label{e-candidates} 
Let $e\in\g$ be a nilpotent element, let $\tau$ be an associated cocharacter and 
$\g(k)$ the $k$th graded piece of $\g$ associated to $\tau$. 
Suppose $\g$ contains a $p$-subalgebra $W\cong W_1$ with $e=\del$. 
Then the following two conditions must be satisfied:

\begin{enumerate}
\item We have $e^{[p]}=0$;
\item The maximal value of $k$ with $\g(-k)\neq 0$ satisfies $2p-4 \leq k \leq 2p-2$.
\end{enumerate}
\end{lemma}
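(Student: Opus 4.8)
The plan is to prove the two conditions in turn, with (i) feeding into (ii).

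For (i), I would show that the abstract isomorphism $W\cong W_1$ is forced to be one of \emph{restricted} Lie algebras, so that $e^{[p]}$ may be computed inside $W_1$. Since $W$ is a $p$-subalgebra of $\g$, the $p$-map of $\g$ restricts to a $p$-map on $W$; transporting it through the isomorphism equips $W_1$ with a $p$-operation. But for $p\geq 5$ the algebra $W_1$ is simple, hence has trivial centre, and on a restricted Lie algebra with trivial centre the $p$-operation is unique (any two differ by a $p$-semilinear map into the centre). Therefore this $p$-operation is the canonical one, and it suffices to compute $\del^{[p]}$ in $W_1=\Der(k[X]/\la X^p\ra)$, where the $p$-map is $D\mapsto D^p$. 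As $\del=d/dX$ lowers degrees, $(d/dX)^p$ annihilates $k[X]/\la X^p\ra$, so $\del^{[p]}=0$ and hence $e^{[p]}=0$.

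Granting (i), the restricted identity $(\ad x)^{p}=\ad(x^{[p]})$ yields $(\ad e)^{p}=\ad(e^{[p]})=0$, which drives both halves of (ii). For the \textbf{lower bound} I would work inside $W$: from $[\del,X^{j}\del]=jX^{j-1}\del$ one computes $(\ad e)^{p-1}(X^{p-1}\del)=(p-1)!\,\del=-e$ by Wilson's theorem. Since $\tau$ is associated to $e$ we have $e\in\g(2)$, and $\ad e$ shifts the grading by $2$. Writing $X^{p-1}\del=\sum_i v_i$ with $v_i\in\g(i)$, the only summand whose image under $(\ad e)^{p-1}$ can land in grade $2$ is $v_{4-2p}$; comparing the grade-$2$ parts gives $(\ad e)^{p-1}(v_{4-2p})=-e\neq 0$. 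Hence $v_{4-2p}\neq 0$, so $\g(4-2p)=\g(-(2p-4))\neq 0$, and the maximal $k$ with $\g(-k)\neq 0$ is at least $2p-4$.

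For the \textbf{upper bound} I would use that an associated cocharacter satisfies $\g_e\subseteq\g(\geq 0)$ (recorded in the preliminaries via $\g_e(0)=\Lie(C_e)$, $\g_e(>0)=\Lie(R_e)$). Consequently $\ker(\ad e\colon \g(i)\to\g(i+2))=\g_e(i)=0$ for every $i<0$, i.e.\ $\ad e$ is injective on each negative graded piece. Take the lowest nonzero grade $-k$ and $0\neq v\in\g(-k)$; applying $\ad e$ and using injectivity at each negative source grade shows $(\ad e)^{j}v\neq 0$ until the image first reaches a non-negative grade, i.e.\ grade $0$ after $k/2$ steps (if $k$ is even) or grade $1$ after $(k+1)/2$ steps (if $k$ is odd). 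As $(\ad e)^{p}=0$ permits at most $p-1$ successful steps, this forces $k\leq 2p-2$ (and $k\leq 2p-3$ when $k$ is odd). Together with the previous paragraph we obtain $2p-4\leq k\leq 2p-2$.

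The only genuinely delicate point I anticipate is the uniqueness argument in (i): one must be sure that the restricted structure inherited from $\g$ agrees with the canonical one on $W_1$, which is exactly where the triviality of $Z(W_1)$ enters. The grading estimates in (ii) are then essentially bookkeeping, the two external inputs being $(\ad e)^{p}=0$ and the standard containment $\g_e\subseteq\g(\geq 0)$.
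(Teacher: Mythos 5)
Your proof is correct, and its skeleton matches the paper's in two of its three components: part (i) is deduced from the $p$-subalgebra hypothesis, and the lower bound in (ii) comes from the identity $(\ad\del)^{p-1}(X^{p-1}\del)=(p-1)!\,\del=-\del$ together with the fact that $\ad e$ shifts the $\tau$-grading by $2$, which is exactly the paper's argument. Two things differ. First, where the paper dismisses (i) as ``clear'', you supply the justification it silently relies on: the $p$-structure transported to $W_1$ must be the canonical one because $W_1$ is centreless, and two $p$-maps on a restricted Lie algebra differ by a $p$-semilinear map into the centre; this is a worthwhile gap-filling refinement rather than a different route. Second, and more substantively, for the upper bound $k\leq 2p-2$ the paper simply cites \cite[Prop.\ 30]{McN05}, whereas you give a self-contained argument: since $\g_e\subseteq\g(\geq 0)$ for an associated cocharacter (available from the paper's preliminaries via $\g_e(0)=\Lie(C_e)$, $\g_e(>0)=\Lie(R_e)$ and smoothness of centralisers), $\ad e$ is injective on every negative graded piece, so a nonzero $v\in\g(-k)$ survives roughly $k/2$ applications of $\ad e$, while $(\ad e)^p=\ad(e^{[p]})=0$ caps the number of such applications at $p-1$. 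This in effect reproves the direction of McNinch's proposition that the lemma needs. The citation buys brevity; your argument buys independence from the external reference, makes visible that the only inputs are $e^{[p]}=0$ and the structure theory of associated cocharacters, and even yields the marginally sharper bound $k\leq 2p-3$ when $k$ is odd (which the lemma does not require). Both routes are sound and rest on the same underlying facts.
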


\begin{proof}
The necessity of (i) is clear, since $W$ is assumed to be a $p$-subalgebra of $\g$ and the condition $\del^{[p]}=0$ holds in $W$.

For (ii), the upper bound follows from (i) and \cite[Prop.\ 30]{McN05}.
For the lower bound, observe that $e$ must be a non-zero vector in the image of $\ad(e)^{p-1}$. 
This implies that there is a vector $f$ in $\g(-2p+4)$
for which $\ad(e)^{p-1}(f) = e$ is non-zero.
\end{proof} 

The next lemma proves the first two parts of Theorem \ref{necForW1s}. We perform several case by case checks on nilpotent elements satisfying the conclusions of Lemma \ref{e-candidates} in order to check that the relevant statements hold.

\begin{lemma}\label{reduceCases} 
Suppose $e\in \g$ is a nilpotent element with $e^{[p]}=0$, let $\tau$ be an associated cocharacter
and suppose $e$ is in the image of $\ad(e)^{p-1} \g(-2p+4)$.
Then the statements of Theorem \ref{necForW1s}(i) and (ii) hold.

Unless the Levi $\l$ associated to $e$ is of type $A_{p-1}$,
there is a unique $1$-space $\la f\ra\subseteq\g(-2p+4)$ such that $\ad(e)^{p-1}\la f\ra=\la e\ra$.
\end{lemma}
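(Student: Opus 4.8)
The plan is to reformulate the final assertion as an injectivity statement and then reduce it directly to Lemma~\ref{adecapge}. First I would observe that $\ad(e)^{p-1}$ carries $\g(-2p+4)$ into $\g(2)$, and that by hypothesis $e$ lies in the image of this restricted map. A line $\la f\ra\subseteq\g(-2p+4)$ satisfies $\ad(e)^{p-1}\la f\ra=\la e\ra$ precisely when $f$ maps into $\la e\ra$ without being killed; since $e\in\im(\ad(e)^{p-1}|_{\g(-2p+4)})$, the full preimage of $\la e\ra$ is a subspace of dimension $1+\dim\ker(\ad(e)^{p-1}|_{\g(-2p+4)})$, and such a line is unique if and only if this kernel vanishes. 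Thus it suffices to prove that $\ad(e)^{p-1}\colon\g(-2p+4)\to\g(2)$ is injective when $\l$ is not of type $A_{p-1}$.

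I would then assemble two inputs. First, since $\tau$ is associated to $e$, the centraliser is concentrated in non-negative degrees, $\g_e=\bigoplus_{i\geq 0}\g_e(i)$; hence $\ad e\colon\g(i)\to\g(i+2)$ has trivial kernel $\g_e(i)=0$ for every $i<0$, so it is injective in all negative degrees. Second, by the already-established part~(i) the Levi $\l$ has irreducible root system, so ``$\l$ of type $A_{p-1}$'' coincides with ``$\l$ has a factor of type $A_{p-1}$''; as the eight exceptional orbits of Table~\ref{t:adecapg} each involve such a factor, the assumption that $\l$ is not of type $A_{p-1}$ puts us in the generic case of Lemma~\ref{adecapge}, yielding $\im(\ad e)\cap\g_e(0)=0$.

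The argument itself then runs as follows. Suppose $f\in\g(-2p+4)$ with $\ad(e)^{p-1}f=0$, and set $u=\ad(e)^{p-2}f$. Tracking degrees gives $u\in\g(-2p+4+2(p-2))=\g(0)$. On one hand $\ad e(u)=\ad(e)^{p-1}f=0$, so $u\in\g_e(0)$; on the other hand $u=\ad e(\ad(e)^{p-3}f)\in\im(\ad e)$. Hence $u\in\im(\ad e)\cap\g_e(0)=0$, that is, $\ad(e)^{p-2}f=0$. But $\ad(e)^{p-2}\colon\g(-2p+4)\to\g(0)$ is a composite of the maps $\ad e\colon\g(2j)\to\g(2j+2)$ for $j=-p+2,\dots,-1$, each of which has strictly negative source degree and is therefore injective by the first remark. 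The composite is thus injective, forcing $f=0$. This establishes the injectivity, and with it the uniqueness of $\la f\ra$.

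The only genuine input is Lemma~\ref{adecapge}, and this is precisely where the type $A_{p-1}$ exception becomes unavoidable: when $\l$ carries an $A_{p-1}$ factor the intersection $\im(\ad e)\cap\g_e(0)$ is non-zero (Table~\ref{t:adecapg}), so the intermediate element $u$ need not vanish and the peeling-off step collapses, and correspondingly the kernel of $\ad(e)^{p-1}$ on $\g(-2p+4)$ really does become positive-dimensional. The one point I would check carefully is the degree bookkeeping, to confirm that $u$ lands \emph{exactly} in degree $0$ (so that Lemma~\ref{adecapge} applies) and that every remaining map in the composite has negative source degree; everything else is formal.
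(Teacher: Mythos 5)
Your proposal proves only half of the lemma, and the half it omits is the one that carries the real content. Lemma \ref{reduceCases} asserts two things: first, that any $e$ satisfying the hypotheses must satisfy Theorem \ref{necForW1s}(i) and (ii) (regular in a Levi $\l$ with \emph{irreducible} root system, with $p=h+1$, or $p=h$ in type $A$); second, the uniqueness of the line $\la f\ra$. The paper establishes the first assertion by an unavoidably case-by-case verification: Lemma \ref{e-candidates} cuts the nilpotent orbits down to the list in Table \ref{T:reduceCases} (using $e^{[p]}=0$ via \cite{Law95} and the $\tau$-gradings from \cite{LT11}), and then each surviving orbit is tested explicitly for whether $e\in\ad(e)^{p-1}\g(-2p+4)$ --- this is what eliminates orbits such as $F_4(a_2)$ for $p=7$ and leaves precisely the orbits satisfying (i) and (ii). Your proposal contains no argument for this, and worse, it \emph{uses} ``the already-established part (i)'' as an input: within this lemma part (i) is a conclusion, not a hypothesis, so invoking it is circular. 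The dependence is not cosmetic. Without the irreducibility of the root system of $\l$, the hypothesis ``$\l$ is not of type $A_{p-1}$'' does not rule out Levis such as $A_4A_1$ ($p=5$) or $A_6A_1$ ($p=7$), which do have a factor of type $A_{p-1}$; for those, Table \ref{t:adecapg} shows $\im\ad e\cap\g_e(0)\neq 0$, and your peeling-off step (killing $u=\ad(e)^{p-2}f$) fails exactly there.

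That said, \emph{conditional on part (i)}, your argument for the uniqueness assertion is correct and is a genuinely different, more conceptual route than the paper's. The paper handles uniqueness computationally, applying $(\ad e)^{p-1}$ to a generic element of $\g(-2p+4)$ and solving the resulting linear conditions orbit by orbit; you instead deduce injectivity of $\ad(e)^{p-1}$ on $\g(-2p+4)$ from two structural facts: that an associated cocharacter forces $\g_e(i)=0$ for $i<0$, so $\ad e$ is injective on every negative graded piece, and that Lemma \ref{adecapge} kills the degree-zero element $u=\ad(e)^{p-2}f\in\im\ad e\cap\g_e(0)$. Your degree bookkeeping is right ($u$ lands in degree $-2p+4+2(p-2)=0$, and all intermediate sources of $\ad(e)^{p-2}$ sit in negative degrees), and the reduction of uniqueness to triviality of the kernel is sound. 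So the fix is clear: supply the case-by-case verification (or some substitute) for the first assertion, after which your argument disposes of the uniqueness claim with no further computation, which is an improvement on the paper at that step.
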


\begin{proof} 
This is a case by case check. To start with, we reduce the number of cases we must consider using Lemma \ref{e-candidates}. For each nilpotent orbit $e$ in good characteristic,
we may check to see whether it satisfies $e^{[p]}=0$, for example by looking at the tables in \cite{Law95}.
For each of these, we take an associated cocharacter $\tau$.
Helpfully, associated cocharacters are listed in the tables in \cite{LT11}.
One can then apply $\tau$ to each root vector and establish the dimensions of each piece of the associated grading.
Since $\g(-2p+4)$ is assumed to be non-zero, the possible cases for nilpotent orbits through $e$ are given in Table \ref{T:reduceCases}. 

Let us give an example: Suppose $e$ belongs to the orbit $F_4(a_2)$.
Then from \cite[p78]{LT11} we see that the associated cocharacter $\tau$ can be
taken to satisfy $\la \alpha_2, \tau \ra = 2 = \la \alpha_4,\tau \ra, \la \alpha_1, \tau \ra = 0 = \la \alpha_3,\tau \ra$.

Applying this to the negative of the highest root with coefficients $-2342$, we see this is in $\tau$-weight $-2\cdot3-2\cdot2=-10$.
Running through the remainder of the roots we can establish the possible $\tau$-weights which occur. Indeed $-10$ is the lowest weight.
Now if $2p-4\leq 10 \leq 2p-2$, we have that $p = 7$. 
This explains the entry for $F_4(a_2)$ in Table \ref{T:reduceCases}.

To complete the first part, one must check  for each case in Table \ref{T:reduceCases} to see whether the remaining hypothesis of the lemma is satisfied.
This is easily done using GAP, but one can of course do such calculations by hand. 
For a negative example, let us again consider the orbit $F_4(a_2)$ for $p=7$.
Here we may assume
$e = e_{1110} + e_{0001} + e_{0120} + e_{0100}$.
The space $\g(-10)$ is spanned by
$e_{-1342},e_{-2342}$.
Let $f = y_1 e_{-1342} + y_2 e_{-2342}$ be a generic element in $\g(-10)$. We compute
\begin{align*}
\ad(e)^{6} (f) = y_1 (2 \cdot e_{0011} + e_{1100} + 2 \cdot e_{0110} + 2 \cdot e_{1120})
+ y_2 ( e_{0001} + e_{1110} + e_{0120}),
\end{align*}
an expression which 
for no choice of $y_1,y_2$ is a nonzero multiple of $e$
(for instance it does not involve $e_{0100}$).
So $e$ does not satisfy the required conditions. 

For a positive example, let us take the simplest case where $\g=\g_2$, $p=7$ and $e$ is regular.
We may choose $e=e_{10}+e_{01}$. Corresponding to the associated cocharacter 
$\tau$ with weights $2$ on $10$ and $01$,
the $\tau$-weight space with weight $-2p+4=-10$ is occupied just by the span of the root vector which is the negative of the highest root,
namely $-32$. (Thus the uniqueness assertion follows immediately in this case.)
Now apply $(\ad (e_{10}+e_{01}))^6$ to $e_{-32}$.
One finds the answer is $10\cdot e_{10}+(-18)\cdot e_{01}$ which is $3\cdot e_{10}+3\cdot e_{01}$ modulo $7$
so that $e$ satisfies the hypotheses of the lemma.
Now notice that $7=h+1$ and $e$ is regular in $\g$ as required.

In case the dimension of the weight space $\g(-2p+4)$ is bigger than one-dimensional, say $\g(-2p+4)$ is spanned by $x_1,\dots,x_r$, then apply $(\ad e)^{p-1}$ to $\sum\lambda_ix_i$ and equate this to $e$.
This puts conditions on the $\lambda_i$ which are either uniquely satisfied, 
or one is in the case of one of the exceptions above.
\end{proof}

\begin{table}\begin{center}\begin{minipage}{150pt}\begin{center}\begin{tabular}{l l l}
$G$ & $p$ & $\OO$\\\hline\hline
$G_2$ & $7$ & $G_2$\\\hline
$F_4$ & $13$ & $F_4$\\
& $7$ & $F_4(a_2)$\\
& $7$ & $C_3$\\
& $7$ & $B_3$\\
& $5$ & $F_4(a_3)$\\
& $5$ & $C_3(a_1)$\\
& $5$ & $B_2$\\\hline
$E_6$ & $13$ & $E_6$\\
& $7$ & $E_6(a_3)$\\
& $7$ & $D_5(a_1)$\\
& $7$ & $A_5$\\
& $7$ & $D_4$\\
& $5$ & $A_4A_1$\\
& $5$ & $A_4$\\
& $5$ & $D_4(a_1)$\\
& $5$ & $A_3A_1$\\
& $5$ & $A_3$
\end{tabular}\end{center}\end{minipage}
\begin{minipage}{150pt}\begin{center}\begin{tabular}{l l l}
$G$ & $p$ & $\OO$\\\hline\hline
$E_7$ & $19$ & $E_7$\\
 & $13$ & $E_7(a_2)$\\
 & $13$ & $E_6$\\
 & $11$ & $E_7(a_3)$\\
 & $11$ & $D_6$\\
 & $7$ & $A_6$\\
 & $7$ & $E_7(a_5)$\\
 & $7$ & $E_6(a_3)$\\
 & $7$ & $D_6(a_2)$\\
 & $7$ & $D_5(a_1)A_1$\\
 & $7$ & $A_5A_1$\\
 & $7$ & $(A_5)'$\\
 & $7$ & $D_5(a_1)$\\
 & $7$ & $D_4A_1$\\
 & $7$ & $D_4$\\
 & $7$ & $(A_5)''$\\
 & $5$ & $A_4A_2$\\
 & $5$ & $A_4A_1$\\
 & $5$ & $A_3A_2A_1$\\
 & $5$ & $A_4$\\
 & $5$ & $A_3A_2$\\
 & $5$ & $D_4(a_1)A_1$\\
 & $5$ & $D_4(a_1)$\\
 & $5$ & $A_3A_1^2$\\
 & $5$ & $(A_3A_1)'$\\
 & $5$ & $(A_3A_1)''$\\
 & $5$ & $A_3$
\end{tabular}\end{center}\end{minipage}
\begin{minipage}{150pt}\begin{center}\begin{tabular}{l l l}
$G$ & $p$ & $\OO$\\\hline\hline
$E_8$ & $31$ & $E_8$\\
 & $19$ & $E_8(a_3)$\\
 & $19$ & $E_7$\\
 & $13$ & $E_8(a_5)$\\
 & $13$ & $E_8(b_5)$\\
 & $13$ & $D_7$\\
 & $13$ & $E_7(a_2)$\\
 & $13$ & $E_6A_1$\\
 & $13$ & $E_6$\\
 & $11$ & $E_8(a_6)$\\
 & $11$ & $D_7(a_1)$\\
 & $11$ & $E_7(a_3)$\\ 
 & $11$ & $D_6$\\  
 & $7$ & $A_6$\\
 & $7$ & $A_6A_1$\\
 & $7$ & $E_8(a_7)$\\
 & $7$ & $E_7(a_5)$\\
 & $7$ & $E_6(a_3)A_1$\\
 & $7$ & $D_6(a_2)$\\
 & $7$ & $D_5(a_1)A_2$\\
 & $7$ & $A_5A_1$\\
 & $7$ & $E_6(a_3)$\\
 & $7$ & $D_4A_2$\\
 & $7$ & $D_5(a_1)A_1$\\
 & $7$ & $A_5$\\
 & $7$ & $D_5(a_1)$\\
 & $7$ & $D_4A_1$\\
 & $7$ & $D_4$ 
\end{tabular}\end{center}\end{minipage}\end{center}\caption{\label{T:reduceCases}Cases to be checked in Lemma \ref{reduceCases}}\end{table}

The following lemma proves the existence assertion of Theorem \ref{necForW1s}
by guaranteeing the existence of at least one $p$-subalgebra of type $W_1$ whenever $(e,p)$ satisfy the conditions (i) and (ii) of Theorem \ref{necForW1s}.

\begin{lemma}\label{theW1sExist} 
Suppose $(e,p)$ satisfy the conditions (i) and (ii) of Theorem \ref{necForW1s} and let $\l$ be a Levi subalgebra in which $e$ is regular.
Then there exists a $p$-subalgebra $W\cong W_1\leq \l'$ with $\del$ represented by $e$.

\end{lemma}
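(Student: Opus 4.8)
The plan is to build $W$ by first pinning down the two canonical elements $\del$ and $X\del$ and then producing the remaining basis vectors inside $\s:=\l'$. By condition (i) the element $e$ is regular nilpotent in the simple Lie algebra $\s$, and by (ii) its Coxeter number $h$ satisfies $p=h+1$, or $\s$ is of type $A_n$ and $p=h$. Let $\tau$ be a cocharacter associated to $e$ and set $H:=d\tau(1)$, the canonical toral element with $[H,e]=2e$; since $e$ is distinguished in $\l$, the cocharacter $\tau$ evaluates in $\D(L)$ and so $H\in\l'=\s$. I take $\del:=e$, which is $p$-nilpotent (indeed $e^{[p]}=0$, as $p$ exceeds the nilpotency class of a regular $e$ once $p\geq h$), and $X\del:=-\frac{1}{2}H$. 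This last element is again toral: the $p$-map is $p$-semilinear, so $(-\frac{1}{2}H)^{[p]}=(-\frac{1}{2})^pH^{[p]}=(-\frac{1}{2})^pH$, and $(-\frac{1}{2})^p=-\frac{1}{2}$ because $-\frac{1}{2}\in\F_p$. Moreover $[X\del,\del]=-\frac{1}{2}[H,e]=-e=-\del$, so these two elements already realise the defining relation of the Borel subalgebra of $W_1$, and the associated grading puts $\del\in\g(2)$, $X\del\in\g(0)$.

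For $\s$ of classical type the cleanest route is to realise $W\cong W_1$ inside the natural module $V$ of $\s$ via Lemma \ref{naturalreps}, rather than to assemble it by hand. Matching condition (ii) against the Coxeter numbers gives exactly four cases: $\s$ of type $A_{p-1}$ with $V$ a $p$-dimensional simple module $L(\lambda)$; $\s=\sp_{p-1}$ of type $C_{(p-1)/2}$ (so $h=p-1$) with $V=L(p-1)$; $\s=\so_p$ of type $B_{(p-1)/2}$ with $V=L((p-1)/2)$; and $\s=\so_{2n}$ of type $D_n$ with $p=2n-1$, where I take $V=L((p-1)/2)\oplus k$, an orthogonal direct sum still carrying a non-degenerate symmetric form. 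In each case Lemma \ref{naturalreps} gives $W_1\hookrightarrow\s$ with $\del$ acting on $V$ as a single Jordan block (together with a trivial summand in the $D_n$ case), so that $\del$ has regular nilpotent Jordan type on $V$ and hence is regular in $\s$. Since all regular nilpotents of $\s$ are conjugate in good characteristic, I may arrange the image of $\del$ to be the given $e$, producing a $p$-subalgebra $W\cong W_1\le\s$ with $\del$ represented by $e$.

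The remaining possibilities are those in which $\s$ is of exceptional type: here $e$ is regular in a Levi factor of type $G_2$, $F_4$, $E_6$ or $E_7$, or in $\g$ itself, a finite list of pairs $(\s,p)$ with $p\in\{7,13,19,31\}$. For these I would build the higher basis vectors directly. As $\s$ is not of type $A_{p-1}$, Lemma \ref{reduceCases} supplies a distinguished one-dimensional space $\la f\ra\subseteq\g(-2p+4)$ with $\ad(e)^{p-1}\la f\ra=\la e\ra$; after rescaling, $f$ is the candidate for $X^{p-1}\del$, and I set $f_k:=\ad(e)^k(f)$ for $0\le k\le p-1$, so that $f_{p-1}$ is a nonzero multiple of $e$. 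The grading controls weights: $f_k\in\g(-2p+4+2k)$, on which $\ad(X\del)=-\frac{1}{2}\ad H$ acts by the scalar $p-2-k$, precisely the $X\del$-weight of $X^{p-1-k}\del$ in $W_1$. I then set $W:=\la X\del,f_0,\dots,f_{p-1}\ra$.

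The main obstacle is to verify that $W$ closes up to a copy of $W_1$, that is, that each bracket $[f_j,f_k]$ is the predicted multiple of the appropriate $f_m$ rather than merely landing in the ambient graded piece $\g(-2p+4+2(j+k))$, whose dimension may exceed one. In the classical cases this difficulty evaporates, since Lemma \ref{naturalreps} hands us an honest representation and hence the structure constants of $W_1$ for free. In the finitely many exceptional cases no natural module is available, so I would confirm closure and the isomorphism $W\cong W_1$ by direct computation with explicit root vectors, exactly as in the verifications underlying Lemma \ref{reduceCases} (see Section \ref{GAP-calcs}); once closure is established, the weight computation above identifies the structure constants, giving $W\cong W_1\le\s$ with $\del$ represented by $e$.
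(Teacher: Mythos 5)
Your overall strategy coincides with the paper's own proof: for $\l'$ of classical type you invoke Lemma \ref{naturalreps} together with conjugacy of regular nilpotent elements, and for the finitely many exceptional-type Levis you take the distinguished line $\la f\ra\subseteq\g(-2p+4)$ supplied by Lemma \ref{reduceCases}, form the candidate basis from the elements $(\ad e)^k f$, and defer the verification of the $W_1$-relations to explicit (GAP) computation --- this is exactly the paper's route. Your treatment of type $D_n$ (via $V=L((p-1)/2)\oplus k$, so that $\del$ has Jordan type $(2n-1,1)$, the regular type in $D_n$) correctly fills in a detail that the paper leaves implicit in the phrase ``Lemma \ref{naturalreps} gives the result.''

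There is, however, a genuine gap in your classical case analysis: the claim that matching condition (ii) against Coxeter numbers gives ``exactly four cases'' is false. For $\l$ of type $A_n$, condition (ii) permits \emph{both} $p=h=n+1$ (your case $A_{p-1}$) \emph{and} $p=h+1=n+2$, i.e.\ $\l'$ of type $A_{p-2}$, and this second possibility genuinely occurs: for example $A_3$ at $p=5$ in $E_6$ and $E_7$, and $A_5$ at $p=7$ in $E_6$, $E_7$ and $E_8$ (these cases appear throughout Table \ref{tComp} and the Appendix). None of your four listed cases covers them, since $\sl_{p-1}$ is neither $\psl_p$, nor $\sp_{p-1}$, nor an orthogonal algebra. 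The omission is repairable by your own method: take $V=L(p-1)$, so that Lemma \ref{naturalreps} gives $W_1\subseteq\sp_{p-1}\subseteq\sl_{p-1}$ with $\del$ acting as a single Jordan block of size $p-1$ on $V$, hence regular in $\sl_{p-1}$ (type $A_{p-2}$); then conjugate as before to arrange that $\del$ is represented by the given $e$. With this fifth case added, your argument is complete and agrees with the paper's proof.
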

\begin{proof} We have $\l'$ is simple. If $\l'$ is simple of classical type, then Lemma \ref{naturalreps} gives the result. The remaining cases occur when $e$ is regular in a Levi of exceptional type and $(p,\l')$ is one of $(7,G_2)$, $(13,F_4)$, $(13, E_6)$, $(19, E_7)$ or $(31,E_8)$. 

In each of these cases we may, without loss of generality, assume $\l'=\g$. Now Lemma \ref{reduceCases} implies that there is, up to scalars, a unique element $f\in \g(-2p+4)$ such that $e$ is a non-zero multiple of $(\ad e)^{p-1}f$; say $(\ad e)^{p-1}f=\lambda e$. (In each case, we may take $f$ to be the root vector corresponding to the negative of the highest root.) Now according to the standard basis of $W_1$, we have $(\ad \del)^{p-1}X^{p-1}\del=(p-1)!\del$. Thus, replacing $f$ with $f.(p-1)!/\lambda$ it suffices to check that there is a homomorphism $W_1\to\g$ obtained by sending \[(X^{p-1}\del,X^{p-2}\del,\dots,X\del,\del)\to\{f, 1/(p-1).[e,f], \dots, 1/(p-2)!(\ad e)^{p-2}f,1/(p-1)!(\ad e)^{p-1}f=e\}.\] For this it suffices to check that the latter elements satisfy the commutator and $p$-th power relations in $W_1$. This is a straightforward check using the commutator relations amongst basis elements of $\g$. These were performed in GAP.
\end{proof}

By analogy with the notion of a regular $A_1$ subalgebra, let us say that a $p$-subalgebra  $W_1$ of $\g$ is \emph{regular} if $\del$ is represented by a regular element of $\g$. (Of course, a regular $W_1$ then contains a regular $A_1$ subalgebra $\la \del,X\del,X^2\del\ra\cong \sl_2$.)
In order to show that non-regular $W_1$s are not maximal,
we will want to show that each normalises a non-trivial abelian subalgebra of $\g$.
For this most cases can be dealt with by showing that all possible modules whose composition factors coincide with those of the restriction $\g|W$ of the adjoint module $\g$ to $W$ must contain a trivial submodule. To do this we will compute in the next lemma all the possible composition factors.

\begin{lemma} Suppose $W\cong W_1$ is a $p$-subalgebra of $\g$ such that the element $\del\in W$ is represented by the nilpotent element $e$ such that $e$ is not of type $A_{p-1}$. Then the composition factors of $W$ on $\g$ are given in Table \ref{tComp}.\end{lemma}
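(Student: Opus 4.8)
The plan is to exploit the $\tau$-grading of $\g$ together with the algorithm of Proposition \ref{wCompAlg}. The crucial observation is that, under the hypothesis $e\notin A_{p-1}$, the toral element $X\del$ acts by a \emph{scalar} on each graded piece $\g(i)$; consequently the only input the algorithm requires---the lists $\ell_i$ of $X\del$-weights on $\g(i)$---are completely determined by the dimensions $\dim\g(i)$, which one reads off from the associated cocharacters tabulated in \cite{LT11}.

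First I would check that $X\del$ lies in the image of $\ad e$. The commutator relations of $W_1$ give $[\del,X^2\del]=-2X\del$, so $X\del=-\tfrac12\ad(e)(X^2\del)\in\im\ad e$. Since $X\del=(X\del)^{[p]}$ is toral and $[X\del,e]=[X\del,\del]=\del=e$ shows that $X\del$ normalises but does not centralise $\la e\ra$, Proposition \ref{findingcoch}(iii) applies---and this is exactly where the assumption $e\notin A_{p-1}$ enters. It yields a cocharacter $\tau$ associated to $e$ with $\Lie(\tau(\Gm))=\la X\del\ra$. Writing $H=d\tau(1)$, so that $\g(i)$ is the $i$-eigenspace of $\ad H$, we have $\ad(H)(e)=2e$ while $\ad(X\del)(e)=e$; as $X\del\in\la H\ra$ this forces $X\del=\tfrac12 H$, whence $\ad(X\del)$ acts on $\g(i)$ by the scalar $i/2\bmod p$. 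In particular $\g|W$ satisfies the hypotheses of Lemma \ref{lem:grade}: the $\tau$-grading has $\del=e\in\g(2)$ raising degrees by $2$, and each $\g(i)$ is $X\del$-stable. The list $\ell_i$ is therefore the single weight $i/2\in\F_p$ taken with multiplicity $\dim\g(i)$.

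The remaining step is a finite case-by-case computation. By Theorem \ref{necForW1s}(i),(ii) (established in Lemma \ref{reduceCases}) the element $e$ is regular in an irreducible Levi subalgebra with $p=h+1$, or of type $A_n$ with $p=h$; discarding the type-$A_{p-1}$ cases leaves the finite list recorded in Table \ref{tComp}. For each such orbit I would compute $\dim\g(i)$ by evaluating the associated cocharacter of \cite{LT11} on every root, as explained in \S\ref{GAP-calcs}, feed the resulting lists $\ell_i$ into the algorithm of Proposition \ref{wCompAlg}, and read off the composition factors. Because the output depends only on $(\dim\g(i))_i$ and the scalar weights $i/2$---both invariants of the orbit of $e$ and of its (conjugacy-unique) associated cocharacter---the multiset $[\g|W]$ is independent of the particular subalgebra $W$ and depends only on $e$, which is what justifies a single table entry per orbit.

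I expect the main obstacle to be twofold. Conceptually, the heart of the matter is the identification $X\del=\tfrac12 H$, giving the scalar action on $\g(i)$: this is precisely what breaks down in type $A_{p-1}$, where Proposition \ref{findingcoch}(iii) is unavailable and $X\del=H+H_0$ carries a genuine $\g_e(0)$-component $H_0$, so that $\g(i)$ need no longer be a single $X\del$-weight space. Practically, the bulk of the labour is the sheer volume of orbit-by-orbit computations of the gradings and the subsequent runs of the algorithm, which I would carry out in GAP to control the bookkeeping and avoid arithmetic error.
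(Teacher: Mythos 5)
Your proposal takes exactly the route of the paper's own proof: Proposition \ref{findingcoch}(iii) (this is indeed where the hypothesis $e\notin A_{p-1}$ enters) forces $X\del$ to span $\Lie(\tau(\Gm))$ for a cocharacter $\tau$ associated to $e$, so each $\g(i)$ is a single $X\del$-weight space; one then runs the algorithm of Proposition \ref{wCompAlg} orbit by orbit on the dimensions $\dim\g(i)$ read off from \cite{LT11} in GAP. Your explicit verification that $X\del\in\im\ad e$ (via $X\del=\pm\tfrac12[\del,X^2\del]$) is a hypothesis of Proposition \ref{findingcoch}(iii) that the paper invokes without comment, and your closing remark on why the answer depends only on the orbit of $e$ (conjugacy of associated cocharacters under $G_e$) is also a point the paper leaves implicit.

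The one genuine flaw is the sign in your normalisation of $X\del$. In $W_1$ one has $[X\del,\del]=-\del$, not $+\del$: this is the relation the paper itself uses to identify the adjoint module as $L(p-2)$, and again in Lemma \ref{uniqueconjandmax} (where $X\del=-\tfrac12 d\tau(1)$) and in the Appendix (where $[H,e]=[X\del,e]=-e$). Your claim $\ad(X\del)(e)=e$ holds only under the commutator convention $[X^i\del,X^j\del]=(i-j)X^{i+j-1}\del$ stated at the top of \S\ref{repsofW1}, which is a typo there: formula (*), from which the algorithm of Proposition \ref{wCompAlg} is derived, encodes the opposite sign (it gives $[e_0,e_{-1}]\cdot m_j=-e_{-1}\cdot m_j$). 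So the correct normalisation is $X\del=-\tfrac12 H$, and the weight of $X\del$ on $\g(i)$ is $-i/2$, not $i/2$. This is not cosmetic if you feed your lists into Proposition \ref{wCompAlg} as stated: its removal step requires the $X\del$-weight to increase by $1$ each time the degree drops by $2$, which your normalisation violates, so the algorithm cannot even be executed; and pairing the top weight with $L(\mu-1)$ under your sign would produce, e.g., $L(4)$ rather than the correct $L(1)$ as the factor generated by $\g(10)$ in the case $(F_4,p=7,\OO=C_3)$. (The prose of the paper's proof contains the same slip --- ``weight $i/2$'' --- but its worked example and Table \ref{tComp} are computed with the correct sign.) The saving grace is that $\g|W$ is self-dual, so dualising every factor $L(\lambda)\mapsto L(p-1-\lambda)$ leaves the multisets in the table unchanged; hence your table would come out right if the conventions were patched consistently, but as written the derivation needs the sign corrected before the case-by-case computation can be carried out.
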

\begin{proof}
Since $\g$ admits a grading $\g=\bigoplus\g(i)$ with $e=\del$ acting in degree $2$ we may invoke  Proposition \ref{wCompAlg}. Thus the composition factors of $\g|W$ can be computed recursively  by the weights of $X\del$ on the highest graded piece $\g(r)$ of $\g$. Since $e$ is not of type $A_{p-1}$ we have by Proposition \ref{findingcoch}(iii) that each $\g(i)$ is an eigenspace for $X\del$ with weight $i/2$. Thus we need only know the dimensions of each $\g(i)$, which is easy to compute in GAP.

Let us give an example. Suppose $e$ is a nilpotent element of $F_4$ of type $C_3$. Then since $e\in W$, we have $p=7$. The non-zero graded pieces are listed below (note that $\g(i)=\g(-i)$):

\begin{center}\begin{tabular}{c|c c c c c c c c c c c}
$i$ & $0$ & $1$ & $2$ & $3$ & $4$ & $5$ & $6$ & $7$ & $8$ & $9$ & $10$\\\hline
$\dim\g(i)$ & $6$ &$4$ & $3$ &$4$ & $2$ & $2$ & $2$ & $2$ & $1$ & $2$ & $1$\end{tabular}
\end{center}

Thus $\ad\del$ must have at least a one-dimensional kernel on $\g(10)$, $\g(6)$ and $\g(2)$ a two-dimensional kernel on $\g(9)$ and $\g(3)$. Thus amongst the composition factors of $L(G)|W$ we have must at least find $L(1)$, $L(3)$, $L(5)$, $L(5)^2$ and $L(1)^2$, respectively. Peeling off the weights corresponding to these submodules leaves just three weights in the zero weight space, which must correspond to three trivial composition factors. Thus the composition factors are $[\g|W]=L(1)^3,L(5)^3,L(3),k^3$.
\end{proof}

\begin{table}\begin{center}\begin{center}\begin{tabular}{l l l l}
$\g$ & $p$ & $\OO$ & $[\g|W]$\\\hline\hline
$G_2$ & $7$ & $G_2$ & $L(1),L(5)$\\\hline
$F_4$ & $13$ & $F_4$ & $L(1),L(5),L(7),L(11)$\\
& $7$ & $C_3$ & $L(1)^3,L(3),L(5)^3,k^3$\\
& $7$ & $B_3$ & $L(1),L(3)^5,L(5),k^3$\\
& $5$ & $B_2$ & $L(1),L(2)^4, L(3),L(4)^4,k^6$\\\hline
$E_6$ & $13$ & $E_6$ & $L(1),L(4),L(5),L(7),L(8),L(11)$\\
& $7$ & $A_5$ & $L(1)^3,L(2),L(3),L(4),L(5)^3,L(6)^2,k^3$\\
& $7$ & $D_4$ & $L(1),L(3)^8,L(5),k^8$\\
& $5$ & $A_3$ & $L(1),L(2)^5,L(3),L(4)^8,k^{11}$\\\hline
$E_7$ & $19$ & $E_7$ & $L(1),L(5),L(7),L(9),L(11),L(13),L(17)$ \\
 & $13$ & $E_6$ & $L(1),L(4)^3,L(5),L(7),L(8)^3,L(11),k^3$\\
 & $11$ & $D_6$ & $L(1),L(2)^2,L(3),L(5)^2,L(7),L(8)^2,L(9),L(10)^2,k^3$\\
 & $7$ & $(A_5)'$ & $L(1)^3,L(2)^3,L(3),L(4)^3,L(5)^3,L(6)^6,k^6$ \\
 & $7$ & $D_4$ & $L(1),L(3)^{14},L(5),k^{21}$\\
 & $7$ & $(A_5)''$ & $L(1),L(2)^7,L(3),L(4)^7,L(5),k^{14}$\\
 & $5$ & $A_3$ & $L(1),L(2)^7,L(3),L(4)^{16},k^{24}$ \\\hline
$E_8$ & $31$ & $E_8$ & $L(1),L(7),L(11),L(13),L(17),L(19),L(23),L(29)$\\
 & $19$ & $E_7$ & $L(1),L(4)^2,L(5),L(7),L(9),L(11),L(13),L(14)^2,L(17),L(18)^2,k^3$\\
 & $13$ & $D_7$ & $L(1)^3,L(3),L(4)^2,L(5),L(6)^3,L(7),L(8)^2,L(9),L(11)^3,L(12)^2,k^3$\\
 & $13$ & $E_6$ & $L(1),L(4)^7,L(5),L(7),L(8)^7,L(11),k^{14}$\\
 & $11$ & $D_6$ & $L(1),L(2)^4,L(3),L(5)^6,L(7),L(8)^4,L(9),L(10)^4,k^{10}$\\  
 & $7$ & $A_5$ & $L(1)^3,L(2)^7,L(3),L(4)^7,L(5)^3,L(6)^{14},k^{17}$\\
 & $7$ & $D_4$ & $L(1),L(3)^{26},L(5),k^{52}$
\end{tabular}\end{center}\end{center}\caption{Composition factors of subalgebras $W\cong W_1$ containing a nilpotent element of type $e$ not of type $A_{p-1}$}\label{tComp}\end{table} 

\begin{remark} Where $\OO$ is $A_{p-1}$, the potential composition factors of a corresponding $p$-subalgebra of type $W_1$ will in fact differ according to which toral element
$H_0$ represents $X\del = H + H_0$, since unlike in the case of Proposition \ref{findingcoch}(iii) they are not necessarily unique up to conjugacy. We have computed these also but since they require more in depth computations in GAP, we leave this to the Appendix \ref{appendix}, Proposition \ref{ptcomp}.
\end{remark}

To prove part (v) of Theorem \ref{necForW1s} we will want to see that in most cases, the composition factors listed above can only appear in modules in which there is a fixed vector, hence forcing a corresponding $W_1$ subalgebra into a parabolic. The following lemma gives a useful bound to ensure this.

\begin{lemma}\label{cfbound}
Suppose $p \geq 5$ and $V$ is a $W_1$-module with $[V:k]=n_0>0$, $[V:L(p-1)]=n_{-1}$ and $[V:L(1)]=[V:L(p-2)]=n_1$. Then if $V$ contains no trivial submodule, we have $n_0\leq 2n_{-1}+n_1$.\end{lemma}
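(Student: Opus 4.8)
The statement to prove is a numerical inequality $n_0\leq 2n_{-1}+n_1$ whenever the $W_1$-module $V$ has no trivial submodule. The natural strategy is to study how trivial composition factors $L(0)$ can be ``absorbed'' into non-trivial ones via extensions, because the only way to avoid a trivial \emph{submodule} while still having $n_0>0$ copies of the trivial is to have each trivial factor sitting on top of (i.e.\ as a quotient of, with an essential extension by) some other composition factor. The key input is therefore the $\Ext^1$ data of Lemma \ref{ext1forw1}, which controls precisely which simple modules can be glued to the trivial module $L(0)$. I would first read off from that lemma exactly which $L(\lambda)$ admit a non-split extension with $L(0)$, in the direction that puts $L(0)$ on top (so that it does not become a submodule).

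\textbf{Using the Ext data.} From Lemma \ref{ext1forw1}, the nonzero groups $\Ext^1_{W_1}(L(\mu),L(\lambda))$ involving the trivial module are: $\Ext^1(L(0),L(1))$ (i.e.\ the case $(\mu,\lambda)=(0,1)$), $\Ext^1(L(p-2),L(0))$, and the two-dimensional $\Ext^1$ between $L(0)$ and $L(p-1)$. Recall the convention that a nonzero class in $\Ext^1_{W_1}(L(\mu),L(\lambda))$ yields a module with $L(\mu)$ as submodule and $L(\lambda)$ as quotient. To put a copy of the trivial $L(0)$ on \emph{top} (as a quotient, so that it contributes no trivial submodule), I need $L(0)$ in the quotient position, i.e.\ I need $\Ext^1(L(\mu),L(0))\neq 0$. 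Scanning the list, the simple modules $L(\mu)$ that can sit \emph{below} a trivial quotient are exactly $L(p-2)$ (from $(\mu,\lambda)=(p-2,0)$) and $L(p-1)$ (from the $2$-dimensional $\{0,p-1\}$ case). Since $L(p-2)=L(1)^*$ appears with multiplicity $n_1$ and $L(p-1)$ with multiplicity $n_{-1}$, each trivial factor that is \emph{not} a submodule must be accounted for by a distinct extension sitting above a copy of $L(p-2)$ or $L(p-1)$.

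\textbf{The counting argument.} The plan is then a socle/extension bookkeeping argument. If $V$ has no trivial submodule, every one of the $n_0$ trivial composition factors must lie strictly above the socle; by the two-step filtration structure and the $\Ext^1$ vanishing, each such trivial factor is glued (nonsplit) onto a copy of $L(p-2)$ or $L(p-1)$ lying immediately below it. The subtlety is the multiplicity: the $\{0,p-1\}$ extension group is \emph{two}-dimensional, so a single copy of $L(p-1)$ can in principle support trivial factors both above and below, which is why $n_{-1}$ enters with coefficient $2$; whereas $\Ext^1(L(p-2),L(0))$ is only one-dimensional, so each copy of $L(p-2)$ (equivalently each copy of $L(1)$, by counting $n_1=[V:L(p-2)]$) supports at most one trivial factor above it. Assigning to each non-submodule trivial factor a distinct ``slot'' — at most $2$ per copy of $L(p-1)$ and at most $1$ per copy of $L(p-2)$ — gives exactly $n_0\leq 2n_{-1}+n_1$.

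\textbf{Main obstacle.} The delicate point, and where I would spend the most care, is making the ``slot-counting'' rigorous: a priori the trivial factors could be stacked in long chains or share the same lower factor, so I must argue that the extension classes are genuinely independent and that no copy of $L(p-1)$ or $L(p-2)$ is over-counted. The clean way is to localise the argument to the \emph{second socle layer}: let $S=\soc V$ and consider $V/S$; since $V$ has no trivial submodule, $S$ contains no trivial factor, and the trivial factors of $V$ that generate a nonsplit extension downward must have their socle support inside $S$, which consists only of $L(p-2)$'s and $L(p-1)$'s by the $\Ext^1$ constraints. Bounding $\dim\Hom(L(0),V/\soc V)$-type quantities against $\dim\Ext^1(L(0),S)\leq \dim\Ext^1(L(0),L(p-2))\cdot[S:L(p-2)] + \dim\Ext^1(L(0),L(p-1))\cdot[S:L(p-1)]$, together with the observed dimensions $1$ and $2$ of these $\Ext$ groups, yields the bound after checking that all $L(p-2)$ and $L(p-1)$ factors relevant to this estimate are counted by $n_1$ and $n_{-1}$ respectively. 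Verifying this last compatibility — that one may indeed bound using the \emph{total} multiplicities $n_1,n_{-1}$ rather than just those in the socle — is the crux of the argument.
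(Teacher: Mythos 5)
Your overall plan---force each trivial composition factor to sit non-split above some non-trivial factor, and count the available ``slots'' using Lemma \ref{ext1forw1}, with weight $2$ for $L(p-1)$ and weight $1$ for the other relevant simple---is the right idea, and the coefficients are the right ones. But the mechanism you propose for making this rigorous has a genuine gap. Bounding $\dim\Hom_{W_1}(L(0),V/\soc V)$ by $\dim\Ext^1_{W_1}(L(0),\soc V)$ controls only the trivial factors lying in the \emph{second} socle layer. Trivial composition factors can occur arbitrarily high in the socle filtration (for instance above a copy of $L(1)$ or $L(p-1)$ which itself lies in layer two), and such factors are invisible both to $\Hom(L(0),V/\soc V)$ and to $\Ext^1(L(0),\soc V)$. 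Moreover, the issue you flag as the crux---socle multiplicities versus total multiplicities---is not the real obstruction: socle multiplicities are at most total multiplicities, so that replacement goes in the harmless direction. What is missing is an iteration over the whole filtration. The paper supplies it as an induction on the number of composition factors: choose an irreducible submodule $S=L(t)$ (necessarily $t\neq 0$), let $R$ be the maximal trivial submodule of $V/S$, with preimage $R'\subseteq V$; since $V$ has no trivial submodule, the classifying map $R\to\Ext^1_{W_1}(k,S)$ is injective, so $\dim R\leq 2$, with $\dim R=2$ forcing $S=L(p-1)$ and $\dim R=1$ forcing $S\in\{L(1),L(p-1)\}$; the vanishing of self-extensions of $k$ then shows that $V/R'$ again has no trivial submodule, so the inductive hypothesis applies to $V/R'$ and the bookkeeping closes up. Alternatively, your layer-wise idea does become a proof if you run it on \emph{every} consecutive pair of socle layers and sum: the number of trivial summands of layer $i$ is at most twice the number of $L(p-1)$-summands plus the number of $L(1)$-summands of layer $i-1$, and summing over $i$ gives $n_0\leq 2n_{-1}+n_1$. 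Either way, some induction or summation over all layers is indispensable, and it is exactly the piece your write-up leaves open.

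A secondary point: you read $\Ext^1_{W_1}(L(\mu),L(\lambda))$ as having $L(\mu)$ as the \emph{submodule}, and accordingly conclude that a trivial quotient can sit above $L(p-2)$ or $L(p-1)$. The paper uses the opposite (standard) convention---visible in its own base case, where a two-factor module with no trivial submodule is $k/L(1)$ or $k/L(p-1)$, i.e.\ has socle $L(1)$ or $L(p-1)$---under which the relevant groups are $\Ext^1(L(0),L(1))\cong k$ and $\Ext^1(L(0),L(p-1))\cong k\oplus k$. Because the hypothesis forces $[V:L(1)]=[V:L(p-2)]=n_1$, this discrepancy does not change the numerical bound, but in any correct version of the argument you must consistently track which simple actually sits below each trivial factor, so the convention needs to be fixed once and used uniformly. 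Finally, the coefficient $2$ attached to $n_{-1}$ arises because $\Ext^1(L(0),L(p-1))$ is two-dimensional, so \emph{two} trivial factors can sit side by side directly above a single copy of $L(p-1)$---not, as you suggest, because a copy of $L(p-1)$ can support trivial factors ``both above and below''.
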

\begin{proof} By induction on the number of composition factors. We cannot have $V$ irreducible. If $V$ contains $2$ composition factors then the module $V$ is uniserial with successive composition factors $k/L(1)$ or $k/L(p-1)$. Thus the result holds in both these cases.

Suppose the number of composition factors of $V$ is $r$. Then there is an irreducible submodule $S$, say, with $S=L(t)$, $t\neq 0$. First suppose $V/S$ contains no trivial submodules. Then by induction we have $n_0\leq 2n_{-1}+n_1$ if $t\neq 1,p-1$; $n_0\leq 2(n_{-1}-1)+n_1\leq 2n_{-1}+n_1$ if $t=p-1$; and $n_0\leq 2n_{-1}+n_1-1\leq 2n_{-1}+n_1$ if $t=1$, which proves the result in all these cases. Now suppose $V/S$ contains a trivial submodule $R$ of dimension $l$. Then from the exact sequence $0\to S\to V\to V/S\to 0$, taking the preimage $R'$ of $R$ in $V$ we have an exact sequence $0\to S\to R' \to R\to 0$ with no non-trivial $\g$-module map from $R\to R'$. By Lemma \ref{ext1forw1} there are no self-extensions of the trivial module, so $V/R'$ contains no trivial submodule and we may appeal to induction. Since there is no non-trivial $\g$-module map from $R\to R'$, it follows that $\dim \Ext^1_\g(k,S)\geq l$. Thus by Lemma \ref{ext1forw1}, $l\leq 2$. If $l=2$ then $S=L(p-1)$ and in the quotient of $V$ by $R'$ we have $n_0-2$ trivial composition factors and $n_{-1}-1$ composition factors isomorphic to $L(p-1)$. Thus by induction, the lemma holds for $V/R'$ and we have $n_0-2\leq 2(n_{-1}-1)+n_1$ as required. If $l=1$ then either $S=L(p-1)$ or $S=L(1)$. A similar argument by induction shows that the lemma holds again in each case.\end{proof}

The next lemma proves part (v) of Theorem \ref{necForW1s}.

\begin{lemma}\label{notreg=notmax} 
Let $W$ be a $p$-subalgebra of $\g$ with $\del$ represented by a nilpotent element $e$.
Then if $e$ is not a regular element,
$W$ normalises a non-trivial abelian subalgebra of $\g$, 
hence is not maximal.
\end{lemma}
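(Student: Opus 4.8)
The plan is to produce, for each non-regular $e$, a nonzero abelian subalgebra $\mathfrak{a}\subseteq\g$ that is normalised by $W$. Granting such an $\mathfrak{a}$, one has $W\subseteq\n_\g(\mathfrak{a})$, and in fact $W\subsetneq\n_\g(\mathfrak{a})$ because $\mathfrak{a}\not\subseteq W$ (otherwise $\mathfrak{a}$ would be a nonzero abelian ideal of the simple algebra $W\cong W_1$); moreover $\n_\g(\mathfrak{a})\neq\g$, since a simple Lie algebra has no nonzero proper abelian ideal. Thus $W\subsetneq\n_\g(\mathfrak{a})\subsetneq\g$ and $W$ is not maximal. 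The most convenient $\mathfrak{a}$ is a line $kv$ with $0\neq v\in\c_\g(W)$; equivalently, it suffices to find a trivial $W$-submodule of the adjoint module $\g|W$.

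First I would dispose of the cases in which $\g|W$ is \emph{forced} to contain a trivial submodule. Suppose $e$ is not of type $A_{p-1}$, so that by Proposition \ref{findingcoch}(iii) the grading element $X\del$ is determined up to conjugacy and the composition factors $[\g|W]$ are exactly those recorded in Table \ref{tComp}, independently of the particular $W$. Writing $n_0=[\g:k]$, $n_{-1}=[\g:L(p-1)]$, and (using that $\g$ is a self-dual $W$-module, whence $[\g:L(1)]=[\g:L(p-2)]$) $n_1=[\g:L(1)]$, Lemma \ref{cfbound} shows that whenever $n_0>2n_{-1}+n_1$ the module $\g|W$ must contain a trivial submodule, giving $\c_\g(W)\neq0$ and hence $\mathfrak{a}=kv$. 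Scanning Table \ref{tComp}, this strict inequality holds in every non-regular row except for the orbits $C_3,B_2$ (in $F_4$), $A_5,A_3$ (in $E_6$), $D_6,(A_5)',A_3$ (in $E_7$), and $E_7,D_7,A_5$ (in $E_8$). For the orbits of type $A_{p-1}$, which are excluded from Table \ref{tComp}, I would run the identical test on the composition factors computed in Proposition \ref{ptcomp}.

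It remains to treat the exceptional orbits above (and any $A_{p-1}$ orbit failing the test), where the bound of Lemma \ref{cfbound} is not strict. Here the existence of a fixed vector is genuinely delicate: the \emph{module structure} of $\g|W$, and hence whether a trivial composition factor actually splits off as a submodule, can depend on the conjugacy class of $W$, in particular on the non-$G$-cr subalgebras mentioned in the remarks following Theorem \ref{necForW1s}. My approach is to split into two possibilities. If $W$ lies in a conjugate of the proper Levi $\l=\Lie(L)$ in which $e$ is regular (Theorem \ref{necForW1s}(i)), then the nonzero toral subalgebra $\z(\l)=\Lie(Z(L)^\circ)$ centralises $\l'\supseteq W$, and I take $\mathfrak{a}=\z(\l)$. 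Otherwise $W$ is contained in a proper parabolic $\p=\Lie(P)$ but in none of its Levi factors, in which case $W$ normalises the nilradical $\u=\Lie(R_u(P))$ and therefore its characteristic (hence $W$-stable) centre $\z(\u)\neq0$, which is abelian; I then take $\mathfrak{a}=\z(\u)$. The main obstacle is precisely this last dichotomy: controlling the non-$G$-cr $W_1$-subalgebras attached to the listed orbits and confirming that each is housed in a proper parabolic (equivalently, exhibiting the $W$-stable abelian subalgebra directly). I expect this to require the explicit root-space and centraliser computations of the kind automated in Section \ref{GAP-calcs}, rather than composition-factor bookkeeping alone.
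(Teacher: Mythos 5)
Your reduction and your first step are sound and coincide with the paper's: the deduction $W\subsetneq\n_\g(\mathfrak{a})\subsetneq\g$ (which you justify more carefully than the paper does), and the use of Lemma \ref{cfbound} against Table \ref{tComp}, with Proposition \ref{ptcomp} covering the $A_{p-1}$ orbits, is exactly the paper's opening move. Your bookkeeping is correct, and in fact it flags a case the paper's proof silently drops: for $(F_4,7,C_3)$ the factors $L(1)^3,L(5)^3,L(3),k^3$ give $n_0=3=2n_{-1}+n_1$, so the bound is not strict and the case survives, yet it appears neither among the paper's eleven GAP cases nor in its special arguments. On the other hand you lump $(E_7,11,D_6)$ and $(E_8,19,E_7)$ into the delicate pile, where the paper disposes of them by a purely module-theoretic argument you don't have: since $W\cong L(p-2)$ is a submodule of the self-dual module $\g|W$, Lemma \ref{ext1forw1} forces the three trivial factors into subquotients $k/L(p-1)$, and self-duality then produces a subquotient $L(p-1)/(k\oplus k\oplus k)$, contradicting $\dim\Ext^1_{W_1}(L(p-1),k)=2$.

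The genuine gap is your treatment of the remaining cases. The dichotomy you propose --- $W$ lies in a conjugate of the Levi $\l$, or else $W$ lies in a proper parabolic but in no Levi factor of it --- is not exhaustive: there is a third possibility, that $W$ lies in no proper parabolic subalgebra at all, and nothing in your argument excludes it. Since there is no Borel--Tits theorem for modular Lie algebras (the paper's introduction stresses precisely this failure), containment of a non-$G$-cr subalgebra $W_1$ in a proper parabolic is not automatic; establishing it is essentially equivalent to the non-maximality one is trying to prove. This is not a hypothetical worry: the paper's Appendix \ref{appendix} shows that for $(E_7,7,A_6)$ with $\lambda=2,5$ and suitable nonvanishing coefficients, $W$ fixes no nonzero vector of $\g$, so such a $W$ lies in no proper Levi subalgebra whatsoever (a proper Levi has nonzero centre centralising $W$); the paper never shows these $W$ lie in a parabolic either. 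Instead it directly constructs a $W$-stable abelian subalgebra of dimension at most $6$ or $7$, spanned by iterated brackets of a well-chosen vector $v$ with $f$ and $ff$. In all the other delicate cases the paper's computations produce an actual fixed vector, i.e.\ they complete the module-theoretic route rather than the parabolic route. So the work you defer to ``computations of the kind automated in Section \ref{GAP-calcs}'' is the entire content of Appendix \ref{appendix}, and the organising dichotomy you propose would not, as stated, carry that work through.
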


\begin{proof} We use Lemma \ref{cfbound} together with Table \ref{tComp} when $e$ is not of type $A_{p-1}$. Assume $e$ is not regular or of type $A_{p-1}$, yet there is no trivial submodule in $\g|W$. Inspecting Table \ref{tComp} together with Lemma \ref{cfbound} one sees that this rules out most cases. 

We use a special argument in the case $(\g,p,e)$ is $(E_7,11,D_6)$ or $(E_8,19,E_7)$. Since $W$ is assumed to be a subalgebra of $\g$, its adjoint representation $L(p-2)$ must appear as a submodule of $\g|W$. Thus its unique composition factor $L(1)$ must appear in the head of the module. Take the quotient by $L(p-2)$ of the largest submodule of $\g$ not containing the composition factor $L(1)$; call this $M$. Then $M$ is still self-dual containing no composition factors of the form $L(1)$ or $L(p-2)$. By Lemma \ref{ext1forw1} the three trivial composition factors must appear in indecomposable subquotients of the form $k/L(p-1)$. Because $M$ is self-dual, there is a subquotient of the form $L(p-1)/k$. The composition factor $k$ in this subquotient must appear in a (different) subquotient of the form $k/L(p-1)$ and the submodule generated by vectors in $L(p-1)$ must contain both composition factors of this type. Since this applies to all composition factors of type $k$, we must have a subquotient of the form $L(p-1)/(k\oplus k\oplus k)$, but this is a contradiction by Lemma \ref{ext1forw1} as $\Ext_\g^1(L(p-1),k)$ is dimension just $2$.

This rules out all but the remaining eleven cases: 
\[(\g,p,e)=(E_6,5,A_4),\ (E_6,7,A_5),\ (E_7,5,A_4),\ (E_7,7,(A_5)'),\ (E_7,7,A_6),\ (E_8,13,D_7),\]
\[(E_8,7,A_6),\ (E_8,7,A_5),\ (E_7,5,A_3),\ ,(E_6,5,A_3),\ (F_4,5,B_2).\]

For these we perform an intricate series of direct checks in GAP to find a non-trivial
abelian subalgebra normalised by $W$ in $\g$ in all cases.
See Appendix \ref{appendix}.
\end{proof}

We now establish the remaining statements of Theorem \ref{necForW1s}.

\begin{lemma}\label{uniqueconjandmax}
\begin{itemize}
\item[(i)] There is a unique conjugacy class of regular $W_1$s. 
\item[(ii)] A regular $W_1$ in $\g$ is maximal if and only if $\g$ is not of type $E_6$.
\end{itemize}
\end{lemma}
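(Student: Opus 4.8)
My plan is to reduce any regular $W_1$ to a normal form determined by the regular nilpotent element alone. Let $W,W'\cong W_1$ be regular, with $\del,\del'$ represented by regular nilpotent elements $e,e'$; since all regular nilpotents are $G$-conjugate I may assume, after conjugating $W'$, that $e=e'$. The toral elements $X\del\in W$ and $X'\del'\in W'$ both normalise $\la e\ra$, act on it with the same nonzero eigenvalue prescribed by the bracket $[X\del,\del]$ in $W_1$, and lie in $\im\ad e$ (in $W_1$ one has $X\del\in\la\ad(e)(X^2\del)\ra$). As $e$ is regular in the exceptional algebra $\g$, its orbit has no factor of type $A_{p-1}$, so Proposition \ref{findingcoch}(iii) yields cocharacters $\tau,\tau'$ associated to $e$ with $\Lie(\tau(\Gm))=\la X\del\ra$ and $\Lie(\tau'(\Gm))=\la X'\del'\ra$. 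Associated cocharacters being unique up to $G_e$-conjugacy, some $g\in G_e$ carries $\tau$ to $\tau'$; as $g$ fixes $e$ and both toral elements have the same eigenvalue on $e$, in fact $\Ad(g)(X\del)=X'\del'$. Replacing $W$ by $\Ad(g)W$, I may assume $W,W'$ share $e$ and $X\del$, hence the same associated grading $\g=\bigoplus_i\g(i)$. Now $X^{p-1}\del$ lies in $\g(-2p+4)$ with $(\ad e)^{p-1}(X^{p-1}\del)=(p-1)!\,e$; by Lemma \ref{reduceCases} the line in $\g(-2p+4)$ mapping onto $\la e\ra$ under $(\ad e)^{p-1}$ is unique, and the normalisation fixes the scalar, so the two top basis vectors coincide. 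Since, as in the construction of Lemma \ref{theW1sExist}, each $X^{p-1-k}\del$ is a prescribed multiple of $(\ad e)^k$ applied to this vector, the whole subalgebra is recovered and $W=W'$.

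\textbf{Part (ii), the case $\g=E_6$.} By Part (i) it suffices to produce one non-maximal regular $W_1$. Both $E_6$ and its $F_4$-subalgebra (the fixed points of the order-two graph automorphism), a proper subalgebra, have Coxeter number $12$, so both possess a regular $W_1$ at the relevant prime $p=h+1=13$. Under the folding $F_4\hookrightarrow E_6$ the principal nilpotent of $F_4$ is regular in $E_6$; hence the regular $W_1\le F_4$ furnished by Lemma \ref{theW1sExist} has $\del$ regular in $E_6$, i.e.\ it is a regular $W_1$ of $E_6$ sitting inside the proper subalgebra $F_4$, and is therefore not maximal.

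\textbf{Part (ii), the case $\g\neq E_6$.} This is the substantial direction. Suppose $W\subseteq M\subsetneq\g$ with $M$ a subalgebra; since $X\del\in M$, the subalgebra $M$ is graded, hence a $W$-submodule of $\g$. From Table \ref{tComp} the composition factors of $\g|W$ for each of $G_2,F_4,E_7,E_8$ are pairwise non-isomorphic, contain no trivial factor, and fall into dual pairs (with a single self-dual factor for $E_7$); in particular $W\cong L(p-2)$ occurs with multiplicity one and is a genuine submodule. The aim is to force $M=\g$ by excluding every proper $W$-submodule that contains $W$ and closes under the bracket. For $G_2$ this is immediate, as multiplicity one leaves only $L(p-2)$ and $\g$ above $W$. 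For the larger algebras the Ext-quiver of Lemma \ref{ext1forw1} does permit submodules strictly between $W$ and $\g$, and the difficulty --- the crux of the whole lemma --- is that these must be excluded as \emph{subalgebras}, not merely as submodules. My plan is to combine the admissible extensions of Lemma \ref{ext1forw1} with the self-duality of $\g|W$ (so that a subalgebra $M$ makes $M^\perp$ an $M$-, hence $W$-, submodule) to cut the candidate intermediate submodules down to a short list, and then to verify directly --- by hand in the small cases and in GAP otherwise --- that none of them is closed under the Lie bracket.
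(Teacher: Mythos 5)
Your reduction in part (i) --- conjugating so that both copies of $W_1$ share the regular nilpotent $e$, then using Proposition \ref{findingcoch}(iii) and the conjugacy of associated cocharacters (with the eigenvalue normalisation) to arrange that they also share the toral element $X\del$ --- is sound, and it makes explicit a step the paper leaves implicit. The genuine gap is the sentence ``Now $X^{p-1}\del$ lies in $\g(-2p+4)$.'' This containment is not automatic, and it is exactly the hard point. The toral element $X\del$ pins down $\tau$-weights only modulo $p$: from $[X\del,X^{p-1}\del]=(p-2)X^{p-1}\del$ one may only conclude that the element representing $X^{p-1}\del$ lies in the sum of the $\tau$-weight spaces of weight congruent to $4$ modulo $p$, which (all weights being even, since $e$ is regular) is $\g(-2p+4)\oplus\g(4)$. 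The paper eliminates the $\g(4)$-component by a further argument: writing $f$ for the representative, the relations $[f,[e,f]]=0$, $[f,e,\dots,e,f]=0$, which hold in $W_1$, are shown to force the $\g(4)$-part to vanish --- by hand for $G_2$ and in GAP for the other types. Without this step the top element is not confined to a line, Lemma \ref{reduceCases} cannot be brought to bear on it, and the identification $W=W'$ does not follow; note that for non-regular $e$ the analogous uniqueness genuinely fails (see the remarks after Theorem \ref{necForW1s}), so some case-specific verification cannot be avoided here.

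In part (ii), the $E_6$ direction agrees with the paper (a regular $W_1$ lies in $F_4\subset E_6$, and uniqueness from (i) finishes it), and your multiplicity-free argument for $G_2$ is complete and even avoids computation. But for $F_4$, $E_7$ and $E_8$ you have a plan, not a proof: the ``short list'' of intermediate submodules is never produced, and the verification that none of them is bracket-closed is never performed. Moreover the proposed reduction is not itself established: Lemma \ref{ext1forw1} and self-duality only constrain which sets of composition factors \emph{could} occur in a submodule; to know which submodules of $\g|W$ actually exist, and then to test them for closure under the bracket, you need the explicit module structure of $\g|W$ --- computations of the same order as the ones you defer. The paper's reduction is different and sharper: since the factors in Table \ref{tComp} are pairwise distinct and the nullspace of $\ad\del$ on $\g$ has dimension equal to the number of factors, any subalgebra properly containing $W$ must contain one of finitely many explicit $\tau$-weight vectors $v\in\ker\ad\del$, and GAP then verifies $\la W,v\ra=\g$ for each such $v$. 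As written, your part (ii) stands only for $G_2$ and $E_6$.
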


\begin{proof}
Suppose the element $X^{p-1}\del$ is represented by a nilpotent element $f$.
By Proposition \ref{findingcoch}(iii) there is a cocharacter $\tau$ associated to $e=\del$
with $\Lie(\tau(\Gm)) = \la X\del \ra$. We have $X\del = -\frac{1}{2}d\tau(1)$.
Since $[X\del,X^{p-1}\del]=(p-2)X^{p-1}\del$, we get 
that $f$ is in the direct sum of the $\tau$-weight spaces congruent to $-2(p-2)=-2p+4$ modulo $p$.
Since $e$ is regular, all $\tau$-weights are even.
It follows that $f\in\g(-2p+4)\oplus\g(4)$.
Now using GAP, relations of the form $[f,e,f]=0$, $[f,e,\dots,e,f]=0$ quickly imply
that $f\in\g(-2p+4)$.
For example let $\g=G_2$, so that $p=7$ and $e=e_{10}+e_{01}$. We have $\g(-10)=\la e_{-32}\ra$ and $\g(4)=\la e_{11}\ra$. Then $f=\lambda_1e_{-32}+\lambda_2e_{11}$. Now $[f,e,f]=-6\lambda_2^2 e_{32}+2\lambda_1\lambda_2e_{-11}$. Thus we must have $\lambda_2=0$ as required.

Now, since $\dim\g(-2p+4)=1$, $f$ is unique up to scalars. Thus $W$ is uniquely 
determined by $e$, proving (i). 

For (ii), recall $F_4$ is a subalgebra of $E_6$. Under this embedding, one checks that a regular element in $F_4$ is also a regular element in $E_6$. By (i) there is a unique conjugacy class of subalgebras of type $W_1$, hence each is in a subalgebra of type $F_4$ and is not maximal. 

It remains to show that a regular $W_1$ is maximal in the remaining types.
Let $W$ be such a $p$-subalgebra and suppose it is not maximal. Then in the adjoint representation,
there is a minimal $W$-supermodule of $W$ which 
generates a proper subalgebra. The composition factors $[\g|W]$ were given in Table \ref{tComp}. One checks that for each type of $\g$, the dimension of the nullspace of $\del$ is equal to the number of composition factors in $[\g|W]$. Thus there is a basis $\mathcal B$ of $\tau$-weight vectors for the nullspace of $\del$ corresponding to each composition factor. Since in Table \ref{tComp} all the composition factors are pairwise distinct, a minimal supermodule of $W$ contains one of the elements in $\mathcal B$, $v$ say. It is then a computation in GAP to check that in all cases, $\la W,v\ra=\g$. This proves that $W$ is maximal.\end{proof}

\begin{proof}[\it Proof of Theorem A] We summarize by pointing out where in this section we have established the relevant statements in Theorem A.
Parts (i) and (ii) are found in Lemma \ref{reduceCases}; parts (iii) and (iv) are Lemma \ref{uniqueconjandmax}; part (v) is Lemma \ref{notreg=notmax}; the existence assertion is Lemma \ref{theW1sExist}.\end{proof}

\section{Other non-classical subalgebras: Proof of Theorem \ref{onlyW1s}} \label{sec:nonclass}

In this section we give the proof of Theorem \ref{onlyW1s}. We first give a reduction to finding subalgebras of $\g$ which are less than $p^3-3$-dimensional.

\begin{lemma}\label{nomelikyanorcontact} 
There is no proper subalgebra $\h$ of $\g$ of dimension $p^3-3$ or higher.\end{lemma}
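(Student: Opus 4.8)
The goal is to show that the exceptional Lie algebra $\g$, of comparatively small dimension, cannot contain a proper subalgebra of dimension at least $p^3-3$. The plan is to play the dimension bound $\dim\g$ off against the growth of $p^3$ as $p$ ranges over the good primes for each exceptional type.

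\textbf{Strategy.} First I would recall that $p$ is a good prime for $\g$, and that the dimensions are fixed: $\dim\g=14,52,78,133,248$ for types $G_2,F_4,E_6,E_7,E_8$ respectively. The key observation is that a proper subalgebra $\h\subsetneq\g$ satisfies $\dim\h\leq\dim\g-1$, so the hypothesis $\dim\h\geq p^3-3$ would force $p^3-3\leq\dim\g-1$, i.e. $p^3\leq\dim\g+2$. I would then simply check, type by type, that this inequality fails for every good prime $p\geq 5$. For instance in type $E_8$ we need $p^3\leq 250$, which fails already at $p=7$ (since $7^3=343$), leaving only $p=5$; but $5^3=125\leq 250$, so this candidate survives the crudest bound and must be examined more carefully. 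So the naive bound does \emph{not} immediately close all cases, and the genuine content lies in the surviving small-prime cases.

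\textbf{Handling the survivors.} Running $p^3-3\leq\dim\g-1$ through the good primes, the only pair that is not eliminated outright is $(\g,p)=(E_8,5)$ with $p^3-3=122\leq 247$. (Here I am using that $p$ good for $E_8$ means $p\geq 7$, so in fact $p=5$ is \emph{not} good for $E_8$; this already removes the case.) More carefully: the good primes are $p\geq 5$ for types $G_2,F_4,E_6,E_7$ with the extra exclusion of $p=5$ for $E_8$ (where good means $p\geq 7$). I would verify that with these constraints, $p^3-3>\dim\g-1$ in every remaining case: for $G_2,F_4,E_6$ one has $p^3-3\geq 125-3=122>\dim\g-1$ already at $p=5$; for $E_7$ one has $122>132$ is false, so I must check $E_7$ at $p=5$ separately, but $122\leq 132$ means a subalgebra of dimension $\geq 122$ could in principle exist inside $\g$ of dimension $133$. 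Thus the one case requiring a real argument is $(\g,p)=(E_7,5)$.

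\textbf{The genuine obstacle.} The hard part will be eliminating $(E_7,5)$, where the dimension count alone permits a proper subalgebra of dimension between $122$ and $132$. For this I would argue that any such $\h$ has codimension at most $11$ in $\g$, so that the quotient $\g/\h$ is a $W$-module (more precisely a module for the relevant Cartan-type algebra, once $\h$ is pinned down) of very small dimension; I would then invoke the representation-theoretic input of Section~\ref{repsofW1} and the enumeration of candidate simple Cartan-type algebras from Section~\ref{repsofH2} to show no such embedding exists. Concretely, the non-classical simple Lie algebras of dimension $\geq p^3-3=122$ available at $p=5$ are exactly the Melikyan algebra (dimension $125$) and the possibilities discussed around Lemma~\ref{h2reps}; the Melikyan algebra's dimension $125>\dim E_7=133-\text{codim}$ leaves no room for a faithful action with the required small codimension, which is precisely the contradiction anticipated in the remark preceding the lemma. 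I expect the cleanest route is to observe directly that $125>133-11$ forces $\h$ to have codimension $\leq 8$, and then a restriction-of-the-adjoint-module argument, entirely parallel to the one prepared in Lemmas~\ref{h2repsrest} and~\ref{weirdh2repsrest}, rules this out.
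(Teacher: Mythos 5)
Your reduction to the single case $(\g,p)=(E_7,5)$ is correct and is exactly the paper's first step: for $p\geq 7$ one has $p^3-3\geq 340>248\geq\dim\g$, while for $p=5$ the only exceptional algebra of dimension larger than $122$ for which $5$ is a good prime is $E_7$. The gap is that you never actually eliminate this case. Your proposed contradiction --- ``the Melikyan algebra's dimension $125>\dim E_7-\mathrm{codim}$ leaves no room for a faithful action'' --- is not a contradiction at all: a $125$-dimensional subalgebra of a $133$-dimensional algebra is perfectly consistent with every dimension count you write down (indeed $125=133-8$), so nothing has been ruled out. Deferring the rest to ``a restriction-of-the-adjoint-module argument parallel to Lemmas \ref{h2repsrest} and \ref{weirdh2repsrest}'' does not close the gap either: those lemmas rest on the classification of the simple restricted modules of the $p^2$-dimensional Hamiltonian algebras (Lemmas \ref{h2reps} and \ref{weirdh2reps}), and no analogous representation theory for the Melikyan algebra or for $122$-dimensional-and-larger Cartan-type algebras is developed anywhere in the paper --- avoiding precisely that work is the whole point of this lemma.

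The missing idea is a linear-algebra bound converting the small codimension into an upper bound on $\dim\h$. The paper argues as follows: enlarge $\h$ to a maximal $p$-subalgebra $\h'$ and take a Weisfeiler filtration $(\g_{(n)})_{n\in\Z}$ with $\g_{(0)}=\h'$. By \cite[Proposition 3.1.1]{SF88}, $\g_{(-1)}/\g_{(0)}$ is a faithful irreducible module for $\g_{(0)}/\g_{(1)}$; since $\h$ is simple and $\g_{(1)}$ is a nilpotent ideal of $\g_{(0)}$, one gets $\h\cap\g_{(1)}=0$, so $\h$ embeds into $\gl\bigl(\g_{(-1)}/\g_{(0)}\bigr)$. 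As $\dim\g_{(-1)}/\g_{(0)}\leq\dim\g-\dim\h\leq 11$, this forces $\dim\h\leq 11^2=121<122$, a contradiction. (You came close to an even more elementary variant: $\g/\h$ is an $\h$-module under the adjoint action, and the kernel of this action is an ideal of the simple algebra $\h$; it cannot be all of $\h$, since $[\h,\g]\subseteq\h$ would make $\h$ a proper nonzero ideal of the simple algebra $\g$. Hence $\h$ acts faithfully on a space of dimension at most $11$, giving the same bound $\dim\h\leq 121$.) Either way, the quantitative step $\dim\h\leq(\dim\g-\dim\h)^2$ is what your proposal lacks, and without it the case $(E_7,5)$ remains open.
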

\begin{proof} Since $5^3=125$ and $7^3=343$ the only possibility for such a subalgebra would be in $E_7$ when $p=5$. We may enlarge $\h$ to be a maximal $p$-subalgebra $\h'$. Now take a Weisfeiler filtration $(\g_{(n)})_{n\in\Z}$ with $\g_{(0)}=\h'$. By \cite[Proposition 3.1.1]{SF88} $\g_{(-1)}/\g_{(0)}$ is a faithful irreducible module for $\g_{(0)}/\g_{(1)}$. Since $\h$ is simple, and $\g_{(1)}$ is a nilpotent ideal of $\g_{(0)}$ we have $\h\cap \g_{(1)}=0$ and so $\g_{(-1)}/\g_{(0)}$ restricts to a faithful module for the image of $\h$ in $\g_{(0)}/\g_{(1)}$. 
The index of $\h$ in $\g$ is $\leq 11$; thus $\dim\g_{(-1)}/\g_{(0)}\leq 11$. This means that $\dim\h\leq 11^2=121<125-3=122$. But this is a contradiction.\end{proof}

%

Using the lemma, it follows from the Premet--Strade Classification \cite{PS06} that the the only simple subalgebras $\h$ of $\g$ of Cartan type which can possibly appear are as follows:
\begin{center}
\begin{tabular}{l l l l}
$\h$ & $p$ & $\dim \h$ & $\g$ \\ \hline \hline 
$W(1;1)$ & $p < \dim \g$ \\
$W(1;2)$ & $p \leq 13$ \\
$W(2;(1,1))$ & $p \leq 11$ \\
$H(2;(1,1))^{(2)}$ & $p \leq 13$ \\
$H(2;(1,1);\Phi(\tau))^{(1)}$ & $p \leq 13$ \\
$H(2;(1,1);\Phi(1))$ & $p \leq 13$ \\
\end{tabular}
\end{center}
(All dimensions of other simple Lie algebras are at least $p^3-1$. For the classification of rank one Hamiltonians, see \cite[6.3.10]{Str04}.)

\begin{proof}[Proof of Theorem \ref{onlyW1s}] First we reduce to the case that the $p$-closure $\h_p$ is the minimal $p$-envelope of $\h$. 

We work up inductively through the rank of $\g$. 
Since $5^2-2$ is bigger than $14$, we cannot have $\h\leq G_2$. Equally, since the smallest non-trivial representation of $\h$ has dimension no more than $p^2-2$, $\h$ cannot be contained in any classical $A$--$D$-type algebra of rank less than $12$.

Assume we have proved that $\h$ is not a subalgebra of any simple subalgebra of rank less than that of $\g$. Now if $\h\subseteq\h_p\subseteq\g$ with $\h_p$ not semisimple, then the radical $\Rad\h_p$ is a non-trivial ideal in $\h_p$. However $\h$ is also an ideal in $\h_p$, thus $\h$ and $\Rad(\h_p)$ are mutually normalising and have trivial intersection. Thus $\h$ centralises an element of the Lie algebra of $\g$ and hence is in a proper parabolic subalgebra of $\g$. Thus $\h$ projects to a simple subalgebra of a Levi subalgebra of this parabolic, which is of smaller rank, a contradiction.  

Hence we can assume that $\h_p$ is semisimple. 
In particular, its trivial centre is contained in $\h$.
By \cite[2.5.8(iii)]{SF88}, this forces $\h_p$ to be a minimal $p$-envelope of $\h$ as required.

Now we show that there is no $p$-subalgebra isomorphic to $\h_p$ where $\h_p$ is the minimal $p$-envelope of $\h$ where $\h$ is any of $H(2;(1,1))^{(2)}$ (with $\h=\h_p$), $H(2;(1,1);\Phi(\tau))^{(1)}$,  $H(2;(1,1);\Phi(1))$, or $W_2$. Since $H(2;(1,1))^{(2)}\leq W_2$ it suffices to deal with the first two cases. So assume, looking for a contradiction, that $\h\cong H(2;(1,1))^{(2)}$, $\h\cong H(2;(1,1);\Phi(\tau))^{(1)}$ or $\h\cong H(2;(1,1);\Phi(1))$ with $\h\leq\h_p\leq\g$.

By Lemma \ref{h2repsrest} and \ref{weirdh2repsrest} we know that there is a $p$-subalgebra $W\cong W_1$ with $\g|W$ having composition factors such that every $L(r)$ with $2\leq r\leq p-1$ appears the same number of times. But inspecting Table \ref{tComp} (for $\OO\neq A_{p-1}$) and Table \ref{t:ptComp} (when $\OO=A_{p-1}$) we find that no such $p$-subalgebra exists.

It remains to deal with the case $\h=W(1;2)$. 
This is the algebra given by basis $\{e_i:-1\leq i\leq p^2-2\}$ and multiplication
\begin{align*}
[e_i,e_j]=\begin{cases}
\left(\left(\begin{smallmatrix}i+j+1\\j\end{smallmatrix}\right) - 
\left(\begin{smallmatrix}i+j+1\\i\end{smallmatrix}\right)\right)
e_{i+j} &\text{ if }-1\leq i+j\leq p^2-2,\\
0 &\text{ otherwise,}\end{cases}
\end{align*}
(see for example \cite[p4]{Fel}), where
we put $\binom{j}{-1}=0$ for $j\geq 0$. 
Thus we check that the nilpotent endomorphism $\ad e_{-1}$ 
is in the image of $(\ad e_{-1})^{p^2-1}$.
We can assume again (by induction) that $\h_p$ is semsimple and that $\h_p$ is a minimal $p$-envelope of $\h$.
In particular the element in $\g$ representing $e_{-1}$ is nilpotent (cf.\ \emph{loc.\ cit.}).
Now one checks (e.g. the tables in \cite{LT11}) that the largest $r$ for which the space
$\g(-r)$ in the grading associated to $e$ is non-zero is $2h-2$.
Hence the largest $s$ for which $\g(2)$ is in the image of $(\ad e)^s$ is $h+1$.
Thus $p^2-1\leq h+1$, i.e. $p\leq \sqrt{h+2}$.
Now for a root system $\Phi$ of type $(G_2,F_4,E_6,E_7,E_8)$ we have $h+2$ is $(8,14,14,20,30)$. 
This implies $\p\leq (2,3,3,3,5)$, a contradiction as $p$ is a good prime.
Thus $\h$ does not appear as a subalgebra in $\g$. This finishes the proof of the theorem.

\end{proof}

\appendix
\section{The remaining cases from Lemma \ref{notreg=notmax}}
\label{appendix}

We have two jobs to perform in this section, both of which use GAP intensively. The first is to find the composition factors of $[\g|W]$ in the case that $W$ contains a nilpotent element of type $A_{p-1}$ representing $\del$. For the other,  recall that there are eleven cases of $(\g,p,\OO)$ for which we must check whether a $p$-subalgebra isomorphic to $W_1$
with $\del$ represented by a nilpotent element of type $\OO$ 
normalises a non-trivial abelian subalgebra of $\g$.

The cases are:

\begin{align} \label{firstcases}
\begin{split}
(\g,p,e) = &(F_4,5,B_2),\ (E_6,5,A_3),\ (E_6,7,A_5),\ (E_7,5,A_3),\\ &(E_7,7,(A_5)'),\ (E_8,7,A_5),\ (E_8,13,D_7),
\end{split}
\end{align} 
as well as the cases where $\OO = A_{p-1}$:
\begin{align}\label{secondcases}
(\g,p,e) = (E_6,5,A_4),\ (E_7,5,A_4),\ (E_7,7,A_6),\ (E_8,7,A_6).
\end{align}

Let us first calculate the composition factors of the restrictions $[\g|W]$ for $W$ containing a nilpotent element $e$ of type $A_{p-1}$ representing $\del$. For this we use the algorithm described in Proposition \ref{wCompAlg}. The data required is a grading $\g=\bigoplus\g(i)$ and the list of the  weights $\ell_i$ with multiplicities of $X\del$ on each $\g(i)$. In these cases we have many choices for a toral element $h$ representing $X\del$. However, by  Proposition \ref{findingcoch}(ii) we have that 
that it is of the form $H+H_0$ where $H\in\Lie(\tau(\Gm))$ and $H_0\in\g_e(0)\cap\im\ad e$.
We find $H$ by deriving the cocharacter $\tau$ given in \cite[p33]{LT11} and insisting
that it has the correct weight
$\tt [H,e] = [X\del ,e] = -e$.
See Table \ref{t:justHandH0} for our choices of $H$.

In cases \eqref{secondcases}, $H$ and $H_0$ commute, and $H$ is toral, so that $H_0$ is also toral.
Examining Table \ref{t:adecapg}, $\im \ad e \cap \g_e(0)$ is the Lie algebra of a connected reductive
algebraic group of rank $1$ so that $H_0$ is conjugate to a scalar multiple of some
fixed element. We produce this element in GAP by first taking a generic element $\tt w$ in the $-2$ weight
space for $\tau$, and then considering $\tt [e,w]$. This is a generic element in
$\im \ad e \cap \g(0)$, and insisting that it commutes with $\tt e$ and lies in the standard maximal
torus fixes a choice of $H_0$. We may now write $X\del = H + \lambda H_0$ with some scalar $\lambda$. (Note also that if $\{\alpha_1,\dots, \alpha_{p-1}\}$ are simple roots for the Levi subalgebra of type $A_{p-1}$ then $H_0$ lies in the centre of the corresponding $\sl_{p}$ so one can construct the element $H_0$ as $h_{\alpha_1}+2h_{\alpha_2}+\dots+(p-1)h_{\alpha_{p-1}}$.)
As $\lambda H_0$ is toral we must have $\lambda \in \F_p$.

\begin{table}\small \begin{tabular}{l|l|l}
$(\g,p,\OO)$ & $H$ & $H_0$\\\hline
$(E_6,5,A_4)$&$3 \cdot h_1 + 3\cdot h_2 + 2 \cdot h_3 + 2\cdot h_4 $  &
$ 2\cdot h_1 + 3\cdot h_2 + 4\cdot h_3 + h_4$\\
$(E_7,5,A_4)$&$3 \cdot  h_1 + 2 \cdot  h_3 + 2\cdot h_4 + 3\cdot h_2$&
$2\cdot h_1 + 3\cdot h_2 + 4\cdot h_3 + h_4$ \\
$(E_7,7,A_6)$&$4\cdot h_1 + 2\cdot h_3 + h_4 + h_5 + 2\cdot h_6 + 4\cdot h_7$&
$6\cdot h_1 +5\cdot h_3 + 4\cdot h_4 + 3\cdot h_5 + 2\cdot h_6 + h_7$ \\
$(E_8,7,A_6)$&$4\cdot h_2 + 2\cdot h_4 + h_5 + h_6 + 2\cdot h_7 + 4\cdot h_8$
&$ 6\cdot h_2 + 5\cdot h_4 + 4\cdot h_5 + 3\cdot h_6+ 2\cdot h_7 + h_8$.
\end{tabular}
\vspace{10pt}\caption{\label{t:justHandH0}Choices of $H$ and $H_0$}\end{table}

For the purposes of computation in GAP, we will need elements representing $e$, $H$ and $H_0$. In the canonical basis in GAP, these are given in Table \ref{t:HandH0}.

\begin{table}\tiny \begin{tabular}{l|l|l}
$(\g,p,\OO)$ & $e$ & $H$\\\hline
$(F_4,5,B_2)$&
$\tt B[3]+B[4]$&
$\tt B[51] + 3*B[52]$\\
$(E_6,5,A_3)$&
$\tt B[1]+B[3]+B[4]$&
$\tt B[73] + 3*B[75] + B[76]$\\
$(E_6,7,A_5)$&
$\tt B[1] + B[3] + B[4] + B[5] + B[6]$&
$\tt B[73] + 3*B[75] + 6*B[76] + 3*B[77] + B[78]$\\
$(E_7,5,A_3)$&
$\tt B[1]+B[3]+B[4]$&
$\tt B[127] + 3*B[129] + B[130]$\\
$(E_7,7,(A_5)')$&
$\tt B[1] + B[3] + B[4] + B[5] + B[6]$&
$\tt B[127] + 3*B[129] + 6*B[130] + 3*B[131] + B[132]$\\
$(E_8,7,A_5)$&
$\tt B[1]+B[3]+B[4]+B[5]+B[6]$&
$\tt B[241] + 3*B[243] + 6*B[244] + 3*B[245] + B[246]$\\
$(E_8,13,D_7)$&
$\tt B[2]+B[3]+B[4]+B[5]+B[6]+B[7]+B[8]$&
$\tt 7 * B[248] + 2 * B[247] + 11 * B[246] + 8 * B[245]$\\
&& $\tt+ 6 * B[244] + 9 * B[243] + 9 * B[242]$\\
$(E_6,5,A_4)$ &
$\tt B[1]+B[2]+B[3]+B[4]$ &
$\tt 3 * B[73] + 2 * B[75] + 2*B[76] + 3*B[74]$\\
$(E_7,5,A_4)$&
$\tt B[1]+B[2]+B[3]+B[4]$ &
$\tt 3 * B[127] + 2 * B[129] + 2*B[130] + 3*B[128]$\\
$(E_7,7,A_6)$&
$\tt B[1] + B[3] + B[4] + B[5] + B[6] + B[7]$ &
$\tt 4*B[127] + 2*B[129] + B[130] + B[131] + 2*B[132] + 4*B[133]$\\
$(E_8,7,A_6)$&
$\tt B[2]+B[4]+B[5]+B[6]+B[7]+B[8]$ &
$\tt 4*B[242] + 2*B[244] + B[245] + B[246] + 2*B[247] + 4*B[248]$\\
&&\\
$(\g,p,\OO)$ & $H_0$\\\hline
$(E_6,5,A_4)$ &
$\tt 2*B[73] + 3*B[74] + 4*B[75] + B[76]$ \\
$(E_7,5,A_4)$&
$\tt 2*B[127] + 3*B[128] + 4*B[129] + B[130]$ \\
$(E_7,7,A_6)$&
$\tt 6*B[127] +5*B[129] + 4*B[130] + 3*B[131] $\\&$\tt+ 2*B[132] + B[133]$ \\
$(E_8,7,A_6)$&
$\tt 6*B[242] + 5*B[244] + 4*B[245] + 3*B[246] $\\&$\tt+ 2*B[247] + B[248]$.
\end{tabular}
\vspace{10pt}\caption{\label{t:HandH0}Choices of $e$, $H$ and $H_0$}\end{table}

\begin{prop}\label{ptcomp}For the various choices of $\lambda\in\F_p$, Table \ref{t:ptComp} lists the possible composition factors of $\g|W$ where $W$ contains a nilpotent element $e$ of type $A_{p-1}$ representing $\del$, and a toral element $H+\lambda H_0$ representing $X\del$, for $H$ and $H_0$ in Table \ref{t:justHandH0}.\end{prop}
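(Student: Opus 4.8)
The plan is to compute the entries of Table~\ref{t:ptComp} by a direct application of the algorithm in Proposition~\ref{wCompAlg}, exactly as in the computation of Table~\ref{tComp}, but now accounting for the fact that the toral element representing $X\del$ is no longer forced to equal $H$. Recall that in the orbits not of type $A_{p-1}$ one has $X\del = H$ by Proposition~\ref{findingcoch}(iii), so each graded piece $\g(i)$ is a single $X\del$-eigenspace and the weight list $\ell_i$ is determined by $\dim\g(i)$ alone. In the present cases Lemma~\ref{reduceCases} no longer provides uniqueness, and by Proposition~\ref{findingcoch}(ii) the element representing $X\del$ has the shape $H + \lambda H_0$ with $H_0$ a toral element of $\im\ad e\cap\g_e(0)$; the explicit $H$ and $H_0$ are those fixed in Tables~\ref{t:justHandH0} and~\ref{t:HandH0}. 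Since $\lambda H_0$ must itself be toral, $\lambda$ ranges over $\F_p$, and because $\im\ad e\cap\g_e(0)=\z(\l')$ is one-dimensional (Table~\ref{t:adecapg}), these $p$ choices exhaust all conjugacy classes of admissible $X\del$.

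The first step is to record the weight lists $\ell_i$. The grading $\g=\bigoplus_i\g(i)$ is the one attached to the associated cocharacter $\tau$, so $H=-\tfrac{1}{2}d\tau(1)$ acts on $\g(i)$ as the scalar $i/2$ (the value used in the proof of Table~\ref{tComp}). The element $H_0$ lies in $\g_e(0)\subseteq\g(0)$, hence $[\,H_0,\g(i)\,]\subseteq\g(i)$ and each $\g(i)$ is stable under $H_0$; moreover $H$ and $H_0$ commute and both are toral, so they are simultaneously diagonalisable. Consequently $X\del=H+\lambda H_0$ acts on $\g(i)$ with the weights $i/2+\lambda\mu$, where $\mu$ runs (with multiplicity) over the eigenvalues of $H_0$ on $\g(i)$. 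In particular each $\g(i)$ remains stable under $X\del$, so the hypotheses of Proposition~\ref{wCompAlg} are satisfied. I would compute the $H_0$-eigenvalues on every $\g(i)$ in GAP, and then, for each $\lambda\in\F_p$, assemble the lists $\ell_i$ and feed them into the algorithm to read off the composition factors of $\g|W$.

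Two consistency checks accompany the computation. First, $\g$ is self-dual as a $W$-module via the Killing form, so $[\g|W]$ must be stable under $L(i)\mapsto L(p-1-i)$; any $\lambda$ whose output violates this signals an arithmetic slip. Second, since $W$ is a subalgebra the adjoint module $L(p-2)$ must occur, giving a further check on each row. The genuinely laborious part is not conceptual but computational: correctly diagonalising $H_0$ on each graded piece and, within the recursion of Proposition~\ref{wCompAlg}, tracking how a single application of step~(ii) removes weights simultaneously from $\ell_r,\ell_{r-2},\dots$ across many graded pieces, repeated over all $p$ values of $\lambda$. I expect the main obstacle to be bookkeeping accuracy across this moderately large family of cases rather than any single delicate argument, which is precisely why the verification is delegated to GAP.
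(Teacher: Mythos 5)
Your overall strategy coincides with the paper's: Proposition~\ref{ptcomp} is proved there exactly by feeding the $\tau$-grading and the lists of $X\del$-weights into the algorithm of Proposition~\ref{wCompAlg}, with $X\del=H+\lambda H_0$ as supplied by Proposition~\ref{findingcoch}(ii), with $\lambda\in\F_p$ because $\lambda H_0$ is toral, and with the bookkeeping delegated to GAP. Your observation that $H_0\in\g_e(0)\subseteq\g(0)$ commutes with $H$ and stabilises each $\g(i)$, so that the hypotheses of Proposition~\ref{wCompAlg} persist, is the right justification for applying the algorithm, and your two consistency checks (self-duality of $\g|W$ and occurrence of $L(p-2)$) are sensible additions not present in the paper.

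There are, however, two concrete inaccuracies. First, you assert that $\im\ad e\cap\g_e(0)=\z(\l')$ is one-dimensional in all four cases, citing Table~\ref{t:adecapg}; that table says this only for $(E_6,5,A_4)$ and $(E_7,5,A_4)$. For $(E_7,7,A_6)$ and $(E_8,7,A_6)$ the intersection is a subalgebra of type $A_1$, hence three-dimensional, so for half the rows of Table~\ref{t:ptComp} your claim that the $p$ choices $H+\lambda H_0$ exhaust all admissible $X\del$ does not follow from one-dimensionality. The paper instead argues that in every case the intersection is the Lie algebra of a connected reductive group of rank $1$, and a toral element of such an algebra is conjugate --- by an element of $C_e^\circ$, which fixes $e$ and preserves the $\tau$-grading --- to a scalar multiple of one fixed toral element; this extra conjugacy step is what your argument is missing in the $A_6$ cases. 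Second, a sign: since $[X\del,e]=-e$ and $e\in\g(2)$, the element $H$ acts on $\g(i)$ as $-i/2$, not $i/2$. (The paper's prose accompanying Table~\ref{tComp} contains the same slip, but its tables are computed with the correct sign; for instance the factor $L(1)$ coming from $\g(10)$ in the $G_2$ row forces the weight $-5\equiv 2 \pmod 7$.) Feeding $+i/2+\lambda\mu$ rather than $-i/2+\lambda\mu$ into the algorithm would produce wrongly labelled composition factors, so this convention must be fixed correctly before the GAP computation; with these two repairs your argument is the paper's.
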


\begin{table}\begin{center}\begin{center}\begin{tabular}{l l l l l}
$\g$ & $p$ & $\OO$ & $\lambda$ & $[\g|W]$\\\hline\hline

$E_6$ & $5$ & $A_4$ & $0$ & $L(1)^5,L(2)^3,L(3)^5,L(4)^2,k^5$\\
&&& $1$ & $L(1),L(2)^5,L(3),L(4)^8,k^{11}$ \\
&&& $2$ & $L(1)^4,L(2),L(3)^4,L(4)^6,k^9$\\
&&& $3$ & $L(1)^4,L(2),L(3)^4,L(4)^6,k^9$\\
&&& $4$ & $L(1),L(2)^5,L(3),L(4)^8,k^{11}$\\

$E_7$ & $7$ & $A_6$ & $0$ & $L(1),L(2)^3,L(3)^5,L(4)^3,L(5),L(6)^6,k^6$\\
&& & $1$ & $L(1)^2,L(2)^3,L(3)^3,L(4)^3,L(5)^2,L(6)^6,k^6$\\
&& & $2$ & $L(1)^4,L(2)^2,L(3)^3,L(4)^2,L(5)^4,L(6)^4,k^4$\\
&& & $3$ & $L(1)^3,L(2)^3,L(3),L(4)^3,L(5)^3,L(6)^6,k^6$\\
&& & $4$ & $L(1)^3,L(2)^3,L(3),L(4)^3,L(5)^3,L(6)^6,k^6$\\
&& & $5$ & $L(1)^4,L(2)^2,L(3)^3,L(4)^2,L(5)^4,L(6)^4,k^4$\\
&& & $6$ & $L(1)^2,L(2)^3,L(3)^3,L(4)^3,L(5)^2,L(6)^6,k^6$\\

$E_7$ & $5$ & $A_4$ & $0$ & $L(1)^7,L(2)^9,L(3)^7,L(4)^2,k^{10}$ \\
&&& $1$ & $L(1),L(2)^7,L(3),L(4)^{16},k^{24}$ \\
&&& $2$ & $L(1)^8,L(2),L(3)^8,L(4)^8,k^{16}$ \\
&&& $3$ & $L(1)^8,L(2),L(3)^8,L(4)^8,k^{16}$ \\
&&& $4$ & $L(1),L(2)^7,L(3),L(4)^{16},k^{24}$ \\

$E_8$ &  $7$ & $A_6$ & $0$ & $L(1)^5,L(2)^3,L(3)^{13},L(4)^3,L(5)^5,L(6)^6,k^9$ \\
&&& $1$ & $L(1)^4,L(2)^5,L(3)^7,L(4)^5,L(5)^4,L(6)^{10},k^{13}$ \\
&&& $2$ & $L(1)^8,L(2)^4,L(3)^3,L(4)^4,L(5)^8,L(6)^8,k^{11}$ \\
&&& $3$ & $L(1)^3,L(2)^7,L(3),L(4)^7,L(5)^3,L(6)^{14},k^{17}$ \\
&&& $4$ & $L(1)^3,L(2)^7,L(3),L(4)^7,L(5)^3,L(6)^{14},k^{17}$ \\
&&& $5$ & $L(1)^8,L(2)^4,L(3)^3,L(4)^4,L(5)^8,L(6)^8,k^{11}$ \\
&&& $6$ & $L(1)^4,L(2)^5,L(3)^7,L(4)^5,L(5)^4,L(6)^{10},k^{13}$ 

\end{tabular}\end{center}\end{center}\caption{Composition factors of subalgebras $W\cong W_1$ containing a nilpotent element of type $e$ of type $A_{p-1}$, where
$X\del = H + \lambda H_0$}\label{t:ptComp}\end{table}

The rest of this appendix is dedicated to finishing the proof of  Theorem \ref{necForW1s}(v). We check directly in GAP whether such a subalgebra must fix a nonzero vector $v \in \g$. In most cases, we may find such a $v$ and are done. Our strategy is as follows:

\begin{itemize}

\item Set up a simple Lie algebra {\tt g} in GAP of the same type of $\g$ over the ring of polynomials 
$\Q[\tt x\_1,\dots, x\_{\dim\g}]$. Let {\tt B} be its basis. 
The GAP Data Library {\tt Lie Algebras} 
arranges {\tt B} so that {\tt B} is an array with {\tt B[$\dim\g-\rk\g+1$]},\dots,{\tt B[$\dim\g$]} a basis of toral elements
for a maximal torus of ${\tt g}$, and the remaining elements of ${\tt B}$ are root vectors for this torus, with the first $\rk\g$ of these being simple root vectors. These simple root vectors are normally in the Bourbaki ordering; the exception is type $F_4$, where one needs to apply a permutation.

\item From the tables in \cite{LT11}, set {\tt e} to be the nilpotent representative expressed 
in terms of the elements {\tt B[i]} and set {\tt T} to be an array whose entries are the coefficients of the cocharacter $\tau$
associated to {\tt e} in \cite{LT11}.
By the choice of cocharacter in \cite{LT11} we have that each element {\tt B[i]} is a weight vector for $\tau$.

\item Organise the vectors ${\tt B[i]}_{i\in \dim\g}$ into weight spaces for $\tau$.

\item Since $W$ contains a toral element representing $X\del$, we have by Proposition \ref{findingcoch}(ii)
that it is of the form $H+H_0$ where $H\in\Lie(\tau(\Gm))$ and $H_0\in\g_e(0)$.
We find $H$ by deriving the cocharacter $\tau$ given in \cite[p33]{LT11} and insisting
that it has the correct weight
$\tt [H,e] = [X\del ,e] = -e$.
See Table \ref{t:HandH0} for our choices of $H$.

By Proposition \ref{findingcoch}(iii), in cases \eqref{firstcases} where $e$ is not of type $A_{p-1}$ we have $H_0=0$, hence $X\del = H$.

In cases \eqref{secondcases}, $H$ and $H_0$ commute, and $H$ is toral, so that $H_0$ is also toral.
Examining Table \ref{t:adecapg}, $\im \ad e \cap \g_e(0)$ is the Lie algebra of a connected reductive
algebraic group of rank $1$ so that $H_0$ is conjugate to a scalar multiple of some
fixed element. We produce this element in GAP by first taking a generic element $\tt w$ in the $-2$ weight
space for $\tau$, and then considering $\tt [e,w]$. This is a generic element in
$\im \ad e \cap \g(0)$, and insisting that it commutes with $\tt e$ and lies in the standard maximal
torus fixes a choice of $H_0$. We may now write $X\del = H + \lambda H_0$ with some scalar $\lambda$. (Note also that if $\{\alpha_1,\dots, \alpha_{p-1}\}$ are simple roots for the Levi subalgebra of type $A_{p-1}$ then $H_0$ lies in the centre of the corresponding $\sl_{p}$ so one can construct the element $H_0$ as $h_{\alpha_1}+2h_{\alpha_2}+\dots+(p-1)h_{\alpha_{p-1}}$.)
As $\lambda H_0$ is toral we must have $\lambda \in \F_p$.
We use the choices of $H$ and $H_0$ from Table \ref{t:HandH0} and perform the following checks
for all $p$ possible values of $\lambda$.

\item Next we produce an element $\tt f$ that is a candidate for
$\frac{1}{2} X^2 \del$:
\begin{itemize}
\item Let {\tt f} be a generic element in $\tt g$.
By a generic element we mean an element of the form 
$\tt f:=\sum_{i}\tt x\_i.B[i]$.
\item We ensure that $\tt [e,f]=X\del$ and $\tt [X\del,f]=f$ by considering linear relations
among the $\tt x\_i$ resulting from these equations and substituting them in the coefficients
of $\tt f$.
\end{itemize}

\item The putative subalgebra $W$ contains, additionally, the element $X^3\del$ and, moreover $W$ is generated by $X^3\del$ and $\del$.
We perform a similar routine to the above to find an arbitrary element ${\tt ff}$ representing $\frac{1}{6}X^3\del$ on which $X\del$
has the correct weight $\tt [X\del,{\tt ff}] = 2{\tt ff}$. 
By substituting relations in both $\tt f$ and $\tt ff$ we force the relation
$\tt [e,ff]=f$.
(Note that this puts many constraints on $\tt ff$ but we do not attempt to guarantee that we have $\la{\tt ff,e}\ra\cong W$; indeed this will rarely be true.)

\item We look for a vector $\tt v \neq 0$ in $\g$ which is killed by $\tt e$ and $\tt ff$.
Since $W$ is generated by these elements, this will guarantee that $\tt v$ is a fixed vector for $W$.
Specifically:
\begin{itemize}
\item We form a generic element $\tt v$ from the basis vectors.
\item We compute $\tt[v,e]$. Forcing this to be zero puts constraints on the coefficients of $\tt v$.
\item We compute $\tt[X\del,v]$ and set this to be zero, putting more constraints on the coefficients.
\end{itemize}

\item Now consider the expression $\tt [ff,v] \in g$. Suppose that
$\tt x\_{i_1},\dots,x\_{i_r}$ are the indeterminates occuring in $\tt v$.
Now it turns out that the coefficients of $\tt [ff,v]$ in the basis $\tt B$ are
all linear expressions in the $\tt x\_{i_1},\dots,x\_{i_r}$. 
Thus there is a matrix $A$ whose entries are polynomials in the coefficients of $\tt ff$
and with $A \cdot \tt (x\_{i_1},\dots,x\_{i_r})^t = 0$ if and only if $\tt [ff,v] = 0$.

\item We proceed with doing row-reductions on $A$. If the rank of $A$ is strictly
smaller than $r$, we are done: $\tt v$ may be chosen to satisfy $\tt [ff,v]=0$.
This deals with all but the following cases: 

\begin{align*}
(\g,p,e) = 
&(E_6,7,A_5),\
(E_7,7,(A_5)'),\
(E_8,7,A_5),\
(E_8,13,D_7), \\
& (E_7,7,A_6)\text{ for }\lambda=1,2,3,4,5,6  \\
& (E_8,7,A_6)\text{ for }\lambda=2,5.
\end{align*}

\item We go on to consider the elements 
$\tt ff*f*f*f*f$ and $\tt ff*f*f*ff$, which both must vanish for $p=7$
(for the $p=13$ case we consider the element
$\tt ff*f*f*ff*f*f*ff*f*f$).
If there still are linear substitutions that may be read off from the coefficients of these
elements, we perform them on both $\tt f$ and $\tt ff$.
We next try to show that the remaining relations in $\tt ff*f*f*f*f=0=ff*f*f*ff$ again
force the rank of $A$ to be strictly less than $r$, by row-reducing $A$ one step at a time
and trying to substitute the above relations.

\item In the cases $(E_7,7,(A_5)')$ and $(E_8,7,A_5)$, we also use the following 
technique to reduce the number of indeterminates in $f$ and $ff$:
We consider root elements $y \in \Lie(C_e)$
with respect to the roots of $C_e$, the reductive part of the centraliser.
As $\ad(y)$ is nilpotent (in fact, $\ad(y)^4=0$ in all these cases), 
we obtain automorphisms $s_y(t) = \exp(t\cdot \ad(y))$ for $t \in k$ of $\g$, and
$s_y(t)$ satisfies $s_y(t)(e) =e$ and $s_y(t)(H)=H$.
Thus we may replace the pair $\tt (f,ff)$ by any pair
$(s_y(t)({\tt f}),s_y(t)({\tt ff}))$. Choosing $t$ and $y$ suitably, we may use this
to kill a number of coefficients in $\tt (f,ff)$.

\item After these steps, we succeed to find fixed vectors for all but the following cases:
\begin{align*}
&(E_7,7,A_6),\lambda=2,\tt x\_63*x\_108=0,\tt x\_63 \neq 0 \text{ or } \tt x\_108 \neq 0 \text{ and} \\ 
&(E_7,7,A_6),\lambda=5,\tt x\_41*x\_116=0,\tt x\_41 \neq 0 \text{ or } \tt x\_116 \neq 0.
\end{align*}

\item In the above two cases, we get that $\tt [ff,v]=0$ implies $\tt v=0$. This means
that the subalgebras $W$ corresponding to these cases do not fix a non-zero vector
in $\g$. However, let us not insist that $\tt [ff,v]=0$ but that
$\tt v*ff*ff*ff*ff*ff = 0$, by substituting linear equations in $v$. There still exist nonzero $v$
satisfying this relation.
In the cases $(E_7,7,A_6),\lambda=2,\tt x\_63=0,x\_108\neq 0$
and $\lambda=5,\tt x\_41\neq 0,x\_116=0$
we check with GAP that the subspace spanned by
$\tt v*f,v*f*f,\dots,v*f*f*f*f*f*f$ is an at most $6$-dimensional abelian subalgebra of $\tt g$ normalised by $W$, and we may
choose $\tt v$ such that it is nonzero.
In the cases $(E_7,7,A_6),\lambda=2,\tt x\_63\neq 0,x\_108=0$
and $\lambda=5,\tt x\_41=0,x\_116\neq 0$ we consider the subspace spanned by
$\tt v*ff,v*ff*ff*e,v*ff*ff,v*ff*ff*ff*e,v*ff*ff*ff,v*ff*ff*ff*ff*e,v*ff*ff*ff*ff$, which again is an abelian
subalgebra of $\tt g$, now at most $7$-dimensional. It is again normalised by $W$, and we may assume it nonzero.
Hence $W$ normalises a non-trivial abelian subalgebra in all cases.
\end{itemize}

Let us illustrate our procedure by giving a detailed example of our calculations in the case $(\g,p,\OO)=(E_8,7,A_6)$:

Let us first suppose that $\lambda=1$. We have:
\begin{verbatim}
e := B[2]+B[4]+B[5]+B[6]+B[7]+B[8]; 
T := [0,2,-10,2,2,2,2,2];
p := 7;
H := 4*B[242] + 2*B[244] + B[245] + B[246] + 2*B[247] + 4*B[248];
H0:= 6*B[242] + 5*B[244] + 4*B[245] + 3*B[246] + 2*B[247] + B[248];
Xd := H+H0;
\end{verbatim}

We start with completely generic $\tt f$ and $\tt ff$ and $\tt v$.
So for instance $\tt f = \sum_i x\_i B[i]$.
Now to ensure $\tt [e,f]=Xd$, we consider the difference 
$\tt [e,f]-Xd$:
{\footnotesize
\begin{verbatim}e*f - Xd =
(5*x_242+x_244)*v.2+(x_242+x_243+5*x_244+x_245)*v.4+(x_244+5*x_245+x_246)*v.5+(x_245+5\
*x_246+x_247)*v.6+(x_246+5*x_247+x_248)*v.7+(x_247+5*x_248)*v.8+(x_2+6*x_4)*v.10+(x_3)\
*v.11+(x_4+6*x_5)*v.12+(x_5+6*x_6)*v.13+(x_6+6*x_7)*v.14+(x_7+6*x_8)*v.15+(x_9)*v.16+(\
6*x_11)*v.17+(x_10+6*x_12)*v.18+(x_11)*v.19+(x_12+6*x_13)*v.20+(x_13+6*x_14)*v.21+(x_1\
4+6*x_15)*v.22+(6*x_16)*v.23+(x_16)*v.24+(x_17+6*x_19)*v.25+(x_18+6*x_20)*v.26+(x_19)*\
v.27+(x_20+6*x_21)*v.28+(x_21+6*x_22)*v.29+(x_23+6*x_24)*v.30+(x_24)*v.31+(x_25)*v.32+\
(x_25+6*x_27)*v.33+(x_26+6*x_28)*v.34+(x_27)*v.35+(x_28+6*x_29)*v.36+(x_30)*v.37+(x_30\
+6*x_31)*v.38+(x_31)*v.39+(x_32+x_33)*v.40+(x_33+6*x_35)*v.41+(x_34+6*x_36)*v.42+(x_35\
)*v.43+(x_37+x_38)*v.45+(x_38+6*x_39)*v.46+(x_39)*v.47+(x_40)*v.48+(x_40+x_41)*v.49+(x\
_41+6*x_43)*v.50+(x_44)*v.51+(x_45)*v.52+(x_45+x_46)*v.53+(x_46+6*x_47)*v.54+(x_48+x_4\
9)*v.55+(x_49+x_50)*v.56+(x_51)*v.57+(x_51)*v.58+(x_52+x_53)*v.59+(x_53+x_54)*v.60+(x_\
55)*v.61+(x_55+x_56)*v.62+(x_57)*v.63+(x_57+x_58)*v.64+(x_58)*v.65+(x_59)*v.66+(x_59+x\
_60)*v.67+(x_61+x_62)*v.68+(x_63)*v.69+(x_63+x_64)*v.70+(x_64)*v.71+(x_64+x_65)*v.72+(\
x_66+x_67)*v.73+(x_68)*v.74+(x_69+x_70)*v.75+(x_70+x_71)*v.76+(x_70+x_72)*v.77+(x_71+x\
_72)*v.78+(x_73)*v.79+(x_75+x_76)*v.80+(x_75+x_77)*v.81+(x_76)*v.82+(x_76+x_77+x_78)*v\
.83+(x_78)*v.84+(x_80+x_82)*v.85+(x_80+x_81+x_83)*v.86+(x_82+x_83)*v.87+(x_83+x_84)*v.\
88+(x_85)*v.89+(x_85+x_86+x_87)*v.90+(x_86+x_88)*v.91+(x_87+x_88)*v.92+(x_89+x_90)*v.9\
4+(x_90+x_91+x_92)*v.95+(x_92)*v.96+(x_93)*v.98+(x_94+x_95)*v.99+(x_95+x_96)*v.100+(x_\
97)*v.101+(x_98)*v.102+(x_99+x_100)*v.103+(x_101)*v.104+(x_102)*v.105+(x_103)*v.106+(x\
_104)*v.107+(x_105)*v.108+(x_107)*v.109+(x_108)*v.110+(x_109)*v.111+(6*x_110)*v.112+(6\
*x_111)*v.113+(6*x_114)*v.115+(6*x_115)*v.116+(6*x_116)*v.117+(6*x_117)*v.118+(6*x_118\
)*v.119+(6*x_119)*v.120+(6*x_130)*v.122+(6*x_131)*v.123+(x_130+6*x_132)*v.124+(x_132+6\
*x_133)*v.125+(x_133+6*x_134)*v.126+(x_134+6*x_135)*v.127+(x_135)*v.128+(6*x_136)*v.12\
9+(6*x_138)*v.130+(x_137+6*x_139)*v.131+(x_138+6*x_140)*v.132+(x_140+6*x_141)*v.133+(x\
_141+6*x_142)*v.134+(x_142)*v.135+(x_143+6*x_144)*v.136+(6*x_145)*v.137+(6*x_146)*v.13\
8+(x_145+6*x_147)*v.139+(x_146+6*x_148)*v.140+(x_148+6*x_149)*v.141+(x_149)*v.142+(6*x\
_150)*v.143+(x_150+6*x_151)*v.144+(6*x_152+6*x_153)*v.145+(6*x_154)*v.146+(x_153+6*x_1\
55)*v.147+(x_154+6*x_156)*v.148+(x_156)*v.149+(6*x_157+6*x_158)*v.150+(x_158+6*x_159)*\
v.151+(6*x_160)*v.152+(6*x_160+6*x_161)*v.153+(6*x_162)*v.154+(x_161+6*x_163)*v.155+(x\
_162)*v.156+(6*x_165)*v.157+(6*x_165+6*x_166)*v.158+(x_166+6*x_167)*v.159+(6*x_168+6*x\
_169)*v.160+(6*x_169+6*x_170)*v.161+(x_170)*v.163+(6*x_171)*v.164+(6*x_172+6*x_173)*v.\
165+(6*x_173+6*x_174)*v.166+(x_174)*v.167+(6*x_175)*v.168+(6*x_175+6*x_176)*v.169+(6*x\
_176)*v.170+(6*x_177+6*x_178)*v.171+(6*x_179)*v.172+(6*x_179+6*x_180)*v.173+(6*x_180)*\
v.174+(6*x_181+6*x_182)*v.175+(6*x_182)*v.176+(6*x_183+6*x_184)*v.177+(6*x_184+6*x_185\
)*v.178+(6*x_186+6*x_187)*v.179+(6*x_187)*v.180+(6*x_188)*v.181+(6*x_188)*v.182+(6*x_1\
89+6*x_190)*v.183+(6*x_190+6*x_191+6*x_192)*v.184+(6*x_192)*v.185+(6*x_193)*v.186+(6*x\
_193)*v.187+(6*x_194)*v.188+(6*x_195)*v.189+(6*x_195+6*x_196+6*x_197)*v.190+(6*x_196+6\
*x_198)*v.191+(6*x_197+6*x_198)*v.192+(6*x_199)*v.193+(6*x_200+6*x_201)*v.195+(6*x_200\
+6*x_202+6*x_203)*v.196+(6*x_201+6*x_203)*v.197+(6*x_203+6*x_204)*v.198+(6*x_205+6*x_2\
06)*v.200+(6*x_206)*v.201+(6*x_205+6*x_207)*v.202+(6*x_206+6*x_207+6*x_208)*v.203+(6*x\
_208)*v.204+(6*x_209+6*x_210)*v.205+(6*x_210+6*x_211)*v.206+(6*x_210+6*x_212)*v.207+(6\
*x_211+6*x_212)*v.208+(6*x_214)*v.209+(6*x_214+6*x_215)*v.210+(6*x_215)*v.211+(6*x_215\
+6*x_216)*v.212+(6*x_218)*v.213+(6*x_219)*v.214+(6*x_219+6*x_220)*v.215+(6*x_220)*v.21\
6+(6*x_221)*v.217+(6*x_222)*v.218+(6*x_223)*v.219+(6*x_223)*v.220+(6*x_224)*v.221+(6*x\
_225)*v.222+(6*x_226)*v.223+(6*x_227)*v.224+(6*x_228)*v.225+(6*x_229)*v.227+(6*x_230)*\
v.228+(6*x_231)*v.229+(x_232)*v.230+(x_233)*v.231+(x_235)*v.234+(x_236)*v.235+(x_237)*\
v.236+(x_238)*v.237+(x_239)*v.238+(x_240)*v.239+(x_122+4)*v.242+(x_124)*v.244+(x_125+2\
)*v.245+(x_126+3)*v.246+(x_127+3)*v.247+(x_128+2)*v.248\end{verbatim}}
We may repeat linear substitutions of the form $\tt x\_244 = -5x\_242$ and so on to make this expression vanish.
Similarly, we manage to ensure after multiple substitutions that
$\tt [Xd,f]=f, [Xd,ff]=2ff,[e,ff]=f, [e,v]=0$ and $\tt [Xd,v] = 0$.
This leaves us with the following:
{\footnotesize\begin{verbatim}
f := 
(x_42)*v.42+(x_50)*v.48+(6*x_50)*v.49+(x_50)*v.50+(x_54)*v.52+(6*x_54)*v.53+(x_54)*v.54\
+(x_96)*v.94+(6*x_96)*v.95+(x_96)*v.96+(3)*v.122+(5)*v.125+(4)*v.126+(4)*v.127+(5)*v.12\
8+(4*x_198)*v.189+(3*x_198)*v.190+(4*x_198)*v.191
ff :=
(x_36+x_42)*v.34+(x_36)*v.36+(x_50)*v.40+(5*x_50)*v.41+(4*x_50)*v.43+(x_54)*v.45+(5*x_5\
4)*v.46+(4*x_54)*v.47+(x_91+3*x_96)*v.89+(6*x_91+5*x_96)*v.90+(x_91)*v.91+(x_96)*v.92+(\
4)*v.130+(4)*v.132+(6)*v.133+(2)*v.134+(5)*v.135+(x_139)*v.137+(x_139)*v.139+(x_144)*v.\
143+(x_144)*v.144+(3*x_198)*v.195+(2*x_198)*v.196+(6*x_198)*v.197+(x_198)*v.198
v :=
(x_1)*v.1+(6*x_100)*v.99+(x_100)*v.100+(x_121)*v.121+(x_123)*v.123+(x_129)*v.129+(x_185\
)*v.183+(6*x_185)*v.184+(x_185)*v.185+(x_241)*v.241+(6*x_248)*v.242+(5*x_248)*v.244+(4*\
x_248)*v.245+(3*x_248)*v.246+(2*x_248)*v.247+(x_248)*v.248
\end{verbatim}}

Now $\tt v$ has the indeterminates $\tt [ 1, 100, 121, 123, 129, 185, 241, 248 ]$ and
hence $r=8$ is the number of indeterminates.
From $\tt [ff,v]$, 
{\footnotesize\begin{verbatim}
ff*v =
(5*x_50*x_123+5*x_54*x_129+4*x_96*x_185+5*x_100*x_198)*v.34+(4*x_50*x_123+4*x_54*x_129+\
6*x_96*x_185+4*x_100*x_198)*v.36+(x_50*x_241+5*x_50*x_248+x_54*x_121)*v.40+(5*x_50*x_24\
1+4*x_50*x_248+5*x_54*x_121)*v.41+(4*x_50*x_241+6*x_50*x_248+4*x_54*x_121)*v.43+(x_1*x_\
50+6*x_54*x_241+5*x_54*x_248)*v.45+(5*x_1*x_50+2*x_54*x_241+4*x_54*x_248)*v.46+(4*x_1*x\
_50+3*x_54*x_241+6*x_54*x_248)*v.47+(3*x_91*x_248+2*x_96*x_248+5*x_100)*v.89+(4*x_91*x_\
248+x_96*x_248+5*x_100)*v.90+(3*x_91*x_248+3*x_100)*v.91+(3*x_96*x_248+3*x_100)*v.92+(6\
*x_1*x_144+3*x_54*x_185+6*x_139*x_241+2*x_139*x_248+3*x_123)*v.137+(6*x_1*x_144+3*x_54*\
x_185+6*x_139*x_241+2*x_139*x_248+3*x_123)*v.139+(4*x_50*x_185+6*x_121*x_139+x_144*x_24\
1+2*x_144*x_248+3*x_129)*v.143+(4*x_50*x_185+6*x_121*x_139+x_144*x_241+2*x_144*x_248+3*\
x_129)*v.144+(5*x_198*x_248+3*x_185)*v.195+(x_198*x_248+2*x_185)*v.196+(3*x_198*x_248+6\
*x_185)*v.197+(4*x_198*x_248+x_185)*v.198,
\end{verbatim}}
we need the matrix $A$ as described above to be
\begin{verbatim}
gap> A :=
[ [ 0, 5*x_198, 0, 5*x_50, 5*x_54, 4*x_96, 0, 0 ], 
  [ 0, 4*x_198, 0, 4*x_50, 4*x_54, 6*x_96, 0, 0 ], 
  [ 0, 0, x_54, 0, 0, 0, x_50, 5*x_50 ],
  [ 0, 0, 5*x_54, 0, 0, 0, 5*x_50, 4*x_50 ], 
  [ 0, 0, 4*x_54, 0, 0, 0, 4*x_50, 6*x_50 ],
  [ x_50, 0, 0, 0, 0, 0, 6*x_54, 5*x_54 ], 
  [ 5*x_50, 0, 0, 0, 0, 0, 2*x_54, 4*x_54 ],
  [ 4*x_50, 0, 0, 0, 0, 0, 3*x_54, 6*x_54 ],
  [ 0, 5, 0, 0, 0, 0, 0, 3*x_91+2*x_96 ],
  [ 0, 5, 0, 0, 0, 0, 0, 4*x_91+x_96 ], 
  [ 0, 3, 0, 0, 0, 0, 0, 3*x_91 ],
  [ 0, 3, 0, 0, 0, 0, 0, 3*x_96 ], 
  [ 6*x_144, 0, 0, 3, 0, 3*x_54, 6*x_139, 2*x_139 ], 
  [ 6*x_144, 0, 0, 3, 0, 3*x_54, 6*x_139, 2*x_139 ], 
  [ 0, 0, 6*x_139, 0, 3, 4*x_50, x_144, 2*x_144 ], 
  [ 0, 0, 6*x_139, 0, 3, 4*x_50, x_144, 2*x_144 ],
  [ 0, 0, 0, 0, 0, 3, 0, 5*x_198 ], 
  [ 0, 0, 0, 0, 0, 2, 0, x_198 ],
  [ 0, 0, 0, 0, 0, 6, 0, 3*x_198 ], 
  [ 0, 0, 0, 0, 0, 1, 0, 4*x_198 ] ].
\end{verbatim}
After row reductions we obtain 
\begin{verbatim}
gap> Ared := 
[ [ x_50, 0, 0, 0, 0, 0, 6*x_54, 5*x_54 ],
  [ 0, 3, 0, 0, 0, 0, 0, 3*x_91 ], 
  [ 0, 0, x_54, 0, 0, 0, x_50, 5*x_50 ], 
  [ 0, 0, 0, x_50, x_54, 5*x_96, 0, 6*x_91*x_198 ], 
  [ 0, 0, 0, 0, 3*x_54, 4*x_50*x_54, x_50*x_139+x_54*x_144,
                                      5*x_50*x_139+2*x_54*x_144 ],
  [ 0, 0, 0, 0, 0, 1, 0, 4*x_198 ],
  [ 0, 0, 0, 0, 0, 0, 0, x_91+6*x_96 ], 
  [ 0, 0, 0, 0, 0, 0, 0, 0 ],
  ...
  [ 0, 0, 0, 0, 0, 0, 0, 0 ] ]
\end{verbatim}
and this clearly has rank less than or equal to $7$. As $7<8=r$, we are done.

In contrast to this, for $\lambda = 2$, we obtain in the end the matrix
\begin{verbatim}
gap> A :=
[ [ 0, 0, 6*x_62, 6*x_67, 2*x_84, 0, 0 ],
[ 0, 0, 5*x_62, 5*x_67, 4*x_84, 0, 0 ], 
[ 0, 6*x_67, 0, 5*x_84, 0, 6*x_62, 2*x_62 ], 
[ 0, 2*x_67, 0, 4*x_84, 0, 2*x_62, 3*x_62 ], 
[ 6*x_62, 0, 2*x_84, 0, 0, x_67, 2*x_67 ], 
[ 2*x_62, 0, 3*x_84, 0, 0, 5*x_67, 3*x_67 ], 
[ 0, 0, 0, 0, 0, 0, x_84 ], 
[ 0, 0, 0, 0, 0, 0, 3*x_84 ], 
[ 0, 0, 0, 0, 0, 0, 5*x_84 ], 
[ 0, 0, 0, 0, 0, 0, 3*x_84 ], 
[ 0, 0, 6*x_112, x_113, 0, 0, 3*x_106 ], 
[ 0, 6*x_113, 0, 0, 0, x_112, x_112 ], 
[ 6*x_112, 0, 0, 0, 0, 6*x_113, x_113 ], 
[ x_159, 0, 5, 0, 0, x_155, 5*x_155 ], 
[ 6*x_159, 0, 2, 0, 0, 6*x_155, 2*x_155 ], 
[ 6*x_159, 0, 2, 0, 0, 6*x_155, 2*x_155 ], 
[ 0, x_155, 0, 5, 0, 6*x_159, 5*x_159 ], 
[ 0, 6*x_155, 0, 2, 0, x_159, 2*x_159 ], 
[ 0, 6*x_155, 0, 2, 0, x_159, 2*x_159 ], 
[ 0, 0, x_159, 6*x_155, 5, 0, 3*x_178 ], 
[ 0, 0, 6*x_159, x_155, 2, 0, 4*x_178 ], 
[ 0, 0, 0, 0, 0, 0, x_234 ] ]
\end{verbatim}
and it turns out after row-reductions that a priori $A$ may have rank $7$ and $r=7$.
But now we consider $\tt ff*f*f*f*f$:
{\footnotesize\begin{verbatim}
ff*f*f*f*f;
(2*x_84^2*x_234+4*x_62*x_155+4*x_67*x_159+6*x_84*x_178+6*x_42)*v.2+(2*x_84^2*x_234+4*x_\
62*x_155+4*x_67*x_159+6*x_84*x_178+6*x_42)*v.4+(2*x_84^2*x_234+4*x_62*x_155+4*x_67*x_15\
9+6*x_84*x_178+6*x_42)*v.5+(2*x_84^2*x_234+4*x_62*x_155+4*x_67*x_159+6*x_84*x_178+6*x_4\
2)*v.6+(2*x_84^2*x_234+4*x_62*x_155+4*x_67*x_159+6*x_84*x_178+6*x_42)*v.7+(2*x_84^2*x_2\
34+4*x_62*x_155+4*x_67*x_159+6*x_84*x_178+6*x_42)*v.8+(x_62*x_84^2*x_234+5*x_62^2*x_155\
+5*x_62*x_67*x_159+4*x_62*x_84*x_178+x_36*x_62+6*x_42*x_62)*v.74+(x_67*x_84^2*x_234+5*x\
_62*x_67*x_155+5*x_67^2*x_159+4*x_67*x_84*x_178+x_36*x_67+6*x_42*x_67)*v.79+(2*x_84^3*x\
_234+3*x_62*x_84*x_155+3*x_67*x_84*x_159+x_84^2*x_178+2*x_36*x_84+5*x_42*x_84)*v.89+(5*\
x_84^3*x_234+4*x_62*x_84*x_155+4*x_67*x_84*x_159+6*x_84^2*x_178+5*x_36*x_84+2*x_42*x_84\
)*v.90+(2*x_84^3*x_234+3*x_62*x_84*x_155+3*x_67*x_84*x_159+x_84^2*x_178+2*x_36*x_84+5*x\
_42*x_84)*v.91+(4*x_84^4*x_234+4*x_62*x_84^2*x_155+4*x_67*x_84^2*x_159+6*x_84^3*x_178+2\
*x_36*x_84^2+3*x_42*x_84^2+x_62*x_113+x_67*x_112+5*x_84*x_106)*v.120+(3*x_84*x_234)*v.2\
00+(4*x_84*x_234)*v.201+(x_84*x_234)*v.202+(3*x_84*x_234)*v.203+(4*x_84*x_234)*v.204+(6\
*x_67*x_234)*v.213+(6*x_62*x_234)*v.217.
\end{verbatim}}
We see that there still are some linear substitutions possible. After performing
these, we obtain
{\footnotesize
\begin{verbatim}
ff*f*f*f*f =
(5*x_62*x_84^2*x_234)*v.74+(5*x_67*x_84^2*x_234)*v.79+(3*x_84^3*x_234)*v.89+(4*x_84^3*x\
_234)*v.90+(3*x_84^3*x_234)*v.91+(x_84^4*x_234+x_84*x_112*x_155+6*x_84*x_113*x_159+x_62\
*x_113+x_67*x_112)*v.120+(3*x_84*x_234)*v.200+(4*x_84*x_234)*v.201+(x_84*x_234)*v.202+(\
3*x_84*x_234)*v.203+(4*x_84*x_234)*v.204+(6*x_67*x_234)*v.213+(6*x_62*x_234)*v.217.
\end{verbatim}}
Let us first suppose that $\tt x\_{234} \neq 0$. Then the last equation implies
$\tt x\_62=x\_67=x\_84=x\_112=x\_113 = 0$ and this is enough to ensure that the rank
of $A$ is at most $4$ and we are done.

On the other hand, suppose $\tt x\_{234} = 0$. Then the expression for 
$\tt ff*f*f*f*f$ implies that
\begin{align} \label{eq:finaltweak}
\tt
x\_84*x\_112*x\_155+6*x\_84*x\_113*x\_159+x\_62*x\_113+x\_67*x\_112 = 0.
\end{align}
Now we start applying row-reductions to $A$ while distinguishing further subcases.
For instance we may consider the subcase $\tt x\_159,x\_155,x\_113,x\_112 \neq 0$ 
and take $\tt x\_159$ as the pivot
entry for our first row-reduction in $A$. After further row-reductions, we end up with
the following matrix:
{\footnotesize \begin{verbatim}
Ared := 
[ [ x_159, 0, 5, 0, 0, x_155, 5*x_155 ], 
[ 0, x_155, 0, 5, 0, 6*x_159, 5*x_159 ], 
[ 0, 0, x_159, 6*x_155, 5, 0, 3*x_178 ], 
[ 0, 0, 0, 5*x_113, 0, x_112*x_155+6*x_113*x_159, x_112*x_155+5*x_113*x_159 ], 
[ 0, 0, 0, 0, x_112*x_113, 2*x_112^2*x_155^2+3*x_112*x_113*x_155*x_159+2*x_113^2*x_159^2, 
            2*x_112^2*x_155^2+5*x_113^2*x_159^2+2*x_112*x_113*x_178 ], 
[ 0, 0, 0, 0, 0, 4*x_84*x_112*x_113^2*x_155*x_159^2+3*x_84*x_113^3*x_159^3+4*x_62*x_113^3*
            x_159^2+4*x_67*x_112*x_113^2*x_159^2, 6*x_84*x_112*x_113^2*x_155*x_159^2+
            4*x_84*x_113^3*x_159^3+3*x_62*x_113^3*x_159^2+x_67*x_112*x_113^2*x_159^2 ],
[ 0, 0, 0, 0, 0, 3*x_84*x_112*x_155^2+4*x_84*x_113*x_155*x_159+3*x_62*x_113*x_155+
            3*x_67*x_112*x_155, 3*x_84*x_112*x_155^2+x_84*x_113*x_155*x_159+x_62*x_113*
            x_155+3*x_67*x_112*x_155 ], 
[ 0, 0, 0, 0, 0, 5*x_84*x_112*x_113^2*x_155*x_159^2+2*x_84*x_113^3*x_159^3+5*x_62*
            x_113^3*x_159^2+5*x_67*x_112*x_113^2*x_159^2, 4*x_84*x_112*x_113^2*x_155*
            x_159^2+5*x_84*x_113^3*x_159^3+2*x_62*x_113^3*x_159^2+
            3*x_67*x_112*x_113^2*x_159^2 ], 
[ 0, 0, 0, 0, 0, x_84*x_112^2*x_113*x_155^2*x_159+5*x_84*x_112*x_113^2*x_155*x_159^2+
            x_84*x_113^3*x_159^3+x_62*x_112*x_113^2*x_155*x_159+6*x_62*x_113^3*x_159^2+
            x_67*x_112^2*x_113*x_155*x_159+6*x_67*x_112*x_113^2*x_159^2, 
            x_84*x_112^2*x_113*x_155^2*x_159+6*x_84*x_113^3*x_159^3+5*x_62*x_112*x_113^2*
            x_155*x_159+x_62*x_113^3*x_159^2+x_67*x_112^2*x_113*x_155*x_159+5*x_67*x_112*
            x_113^2*x_159^2+x_84*x_112*x_113^2*x_159*x_178 ],
[ 0, 0, 0, 0, 0, 2*x_84*x_112^2*x_113*x_155^2*x_159+3*x_84*x_112*x_113^2*x_155*x_159^2+
            2*x_84*x_113^3*x_159^3+2*x_62*x_112*x_113^2*x_155*x_159+
            5*x_62*x_113^3*x_159^2+2*x_67*x_112^2*x_113*x_155*x_159+
            5*x_67*x_112*x_113^2*x_159^2, 2*x_84*x_112^2*x_113*x_155^2*x_159+
            5*x_84*x_113^3*x_159^3+3*x_62*x_112*x_113^2*x_155*x_159+2*x_62*x_113^3*x_159^2+
            2*x_67*x_112^2*x_113*x_155*x_159+3*x_67*x_112*x_113^2*x_159^2+
            2*x_84*x_112*x_113^2*x_159*x_178 ],
[ 0, 0, 0, 0, 0, 2*x_84*x_112*x_155^2+5*x_84*x_113*x_155*x_159+2*x_62*x_113*x_155+
            2*x_67*x_112*x_155, 2*x_84*x_112*x_155^2+3*x_84*x_113*x_155*x_159+
            3*x_62*x_113*x_155+2*x_67*x_112*x_155 ],
[ 0, 0, 0, 0, 0, 0, x_84 ]]
\end{verbatim}}
Now we may recognise multiples of the left hand side of \eqref{eq:finaltweak} above in this matrix in
its sixth column. This means that $\tt Ared[i][6] = 0$ for all $i \geq 6$ and hence $A$ has
rank at most $6$ and we are done.
Continuing in this way, for each subcase we may row-reduce
$A$ to a matrix $A'$, where we may substitute the equation \eqref{eq:finaltweak} to deduce that 
the rank of $A'$ is strictly less than $r$.
The cases for the remaining choices
of $\lambda$ are dealt with similarly.

Finally we give an example for the use of automorphisms of the form
$\exp(t\cdot \ad(y))$, as mentioned in the steps above.
We consider the case $(E_8,7,A_5)$. After the first steps, we get the following
pair $\tt (f,ff)$.
\begin{verbatim}
f := 
(x_39)*v.39+(x_47)*v.47+(x_52)*v.51+(x_52)*v.52+(x_96)*v.96+(x_97)*v.97+(x_101\
)*v.101+(x_118)*v.118+v.121+(3)*v.123+(6)*v.124+(3)*v.125+v.126+(x_134)*v.127+\
(x_142)*v.135+(x_145+6*x_146)*v.137+(x_145+6*x_146)*v.138+(x_199)*v.194+(6*x_2\
00)*v.195+(6*x_206)*v.201+(6*x_233)*v.232;
ff := 
(6*x_39)*v.35+(6*x_47)*v.43+(x_48+2*x_52)*v.44+(x_48+x_52)*v.45+(x_48)*v.48+(x\
_96)*v.92+(x_97)*v.93+(x_101)*v.98+(6*x_118)*v.117+(6)*v.129+(3)*v.131+(4)*v.1\
32+v.133+(x_134)*v.134+(x_142)*v.142+(2*x_145+6*x_146)*v.143+(x_145)*v.145+(x_\
146)*v.146+(x_199)*v.199+(x_200)*v.200+(x_206)*v.206+(x_233)*v.233
\end{verbatim}

Here $C_e$ is of type $G_2A_1$. The root $y = \tt B[8]$ and its negative
$z = \tt B[128]$ belong to $C_e$. The corresponding automorphisms
${\tt s1} := s_y(t)=\exp(t\cdot \ad(y))$, 
${\tt s2} :=s_z(t)=\exp(t\cdot \ad(z))$ have the
following effect on $\tt (f,ff)$:

\begin{verbatim}
s1(f) =
(x_39)*v.39+(x_39*t+x_47)*v.47+(x_52)*v.51+(x_52)*v.52+(x_96)*v.96+(x_97)*\
v.97+(x_97*t+x_101)*v.101+(x_118)*v.118+v.121+(3)*v.123+(6)*v.124+(3)*v.12\
5+v.126+(6*x_142*t+x_134)*v.127+(x_142)*v.135+(x_145+6*x_146)*v.137+(x_145\
+6*x_146)*v.138+(x_199)*v.194+(x_206*t+6*x_200)*v.195+(6*x_206)*v.201+(6*x\
_233)*v.232
s1(ff) =
(6*x_39)*v.35+(6*x_39*t+6*x_47)*v.43+(x_48+2*x_52)*v.44+(x_48+x_52)*v.45+(\
x_48)*v.48+(x_96)*v.92+(x_97)*v.93+(x_97*t+x_101)*v.98+(6*x_118)*v.117+(6)\
*v.129+(3)*v.131+(4)*v.132+v.133+(6*x_142*t+x_134)*v.134+(x_142)*v.142+(2*\
x_145+6*x_146)*v.143+(x_145)*v.145+(x_146)*v.146+(x_199)*v.199+(6*x_206*t+\
x_200)*v.200+(x_206)*v.206+(x_233)*v.233
s2(f) =
(x_47*t+x_39)*v.39+(x_47)*v.47+(x_52)*v.51+(x_52)*v.52+(x_96)*v.96+(x_101*\
t+x_97)*v.97+(x_101)*v.101+(x_118)*v.118+v.121+(3)*v.123+(6)*v.124+(3)*v.1\
25+v.126+(x_134)*v.127+(6*x_134*t+x_142)*v.135+(x_145+6*x_146)*v.137+(x_14\
5+6*x_146)*v.138+(x_199)*v.194+(6*x_200)*v.195+(x_200*t+6*x_206)*v.201+(6*\
x_233)*v.232
s2(ff) =
(6*x_47*t+6*x_39)*v.35+(6*x_47)*v.43+(x_48+2*x_52)*v.44+(x_48+x_52)*v.45+(\
x_48)*v.48+(x_96)*v.92+(x_101*t+x_97)*v.93+(x_101)*v.98+(6*x_118)*v.117+(6\
)*v.129+(3)*v.131+(4)*v.132+v.133+(x_134)*v.134+(6*x_134*t+x_142)*v.142+(2\
*x_145+6*x_146)*v.143+(x_145)*v.145+(x_146)*v.146+(x_199)*v.199+(x_200)*v.200+\
(6*x_200*t+x_206)*v.206+(x_233)*v.233
\end{verbatim}

We see that if the coefficient $\tt x\_39$ is nonzero in $\tt (f,ff)$,
we may assume that $\tt x\_47$ is also nonzero after applying $\tt s1$
with $\tt t$ chosen such that $\tt x\_39*t+x\_47 \neq 0$.
But then we may apply $\tt s2$ to ensure that $\tt x\_39=0$.
Thus we may assume from the outset that $\tt x\_39=0$.

Now the automorphism $\tt s2$ may only
be applied again in the case $\tt x\_47=0$, otherwise it destroys the
property $\tt x\_39=0$. However, we have a total of $14$ root elements 
$y \in \Lie(C_e)$ to use for this process. 
Combined with the methods from before we again succeed in showing
that the rank of the corresponding matrix $A$ is less than $r$.

{\footnotesize
\bibliographystyle{amsalpha}
\bibliography{bib}}

\providecommand{\bysame}{\leavevmode\hbox to3em{\hrulefill}\thinspace}
\providecommand{\MR}{\relax\ifhmode\unskip\space\fi MR }
\providecommand{\MRhref}[2]{%
  \href{http://www.ams.org/mathscinet-getitem?mr=#1}{#2}
}
\providecommand{\href}[2]{#2}
\begin{thebibliography}{BMRT13}

\bibitem[BGT13]{BGT13}
Timothy~C. Burness, Soumaia Ghandour, and Donna~M. Testerman, \emph{Irreducible
  geometric subgroups of classical algebraic groups}.

\bibitem[BMRT13]{BMRT13}
M.~Bate, B.~Martin, G.~R{{\"o}}hrle, and R.~Tange, \emph{Closed orbits and
  uniform {$S$}-instability in geometric invariant theory}, Trans. Amer. Math.
  Soc. \textbf{365} (2013), no.~7, 3643--3673. \MR{3042598}

\bibitem[BNW09]{BNW09}
B.~D. Boe, D.~K. Nakano, and E.~Wiesner, \emph{{$\rm Ext^1$}-quivers for the
  {W}itt algebra {$W(1,1)$}}, J. Algebra \textbf{322} (2009), no.~5,
  1548--1564. \MR{2543622 (2011b:17027)}

\bibitem[Cha41]{Cha41}
H.-J. Chang, \emph{\"{U}ber {W}ittsche {L}ie-{R}inge}, Abh. Math. Sem.
  Hansischen Univ. \textbf{14} (1941), 151--184. \MR{0005100 (3,101a)}

\bibitem[Die52]{Dieu:1953}
J.~Dieudonn{\'e}, \emph{Sur les groupes de {L}ie alg\'ebriques sur un corps de
  caract\'eristique {$p>0$}}, Rend. Circ. Mat. Palermo (2) \textbf{1} (1952),
  380--402 (1953). \MR{0055355 (14,1062a)}

\bibitem[Dyn52]{Dyn52}
E.~B. Dynkin, \emph{Maximal subgroups of the classical groups}, Trudy Moskov.
  Mat. Ob\v s\v c. \textbf{1} (1952), 39--166. \MR{MR0049903 (14,244d)}

\bibitem[Fel07]{Fel}
J.~Feldvoss, \emph{On the cohomology of modular {L}ie algebras}, Lie algebras,
  vertex operator algebras and their applications, Contemp. Math., vol. 442,
  Amer. Math. Soc., Providence, RI, 2007, pp.~89--113. \MR{2372558
  (2008m:17037)}

\bibitem[FSW14]{FSW14}
J{\"o}rg Feldvoss, Salvatore Siciliano, and Thomas Weigel, \emph{Restricted lie
  algebras with maximal 0-pim}.

\bibitem[gro14]{GAP}
The~GAP group, \emph{Gap - groups, algorithms, and programming}, Version 4.7.5
  (2014), http://www.gap--system.org.

\bibitem[Hol98]{Hol98}
R.~R. Holmes, \emph{Simple restricted modules for the restricted {H}amiltonian
  algebra}, J. Algebra \textbf{199} (1998), no.~1, 229--261. \MR{1489362
  (99a:17011)}

\bibitem[HS14]{HS14}
S.~Herpel and D.~I. Stewart, \emph{On the smoothness of normalisers, the
  subalgebra structure of modular lie algebras and the cohomology of small
  representations}.

\bibitem[Hum67]{Hum67}
J.~E. Humphreys, \emph{Algebraic groups and modular {L}ie algebras}, Memoirs of
  the American Mathematical Society, No. 71, American Mathematical Society,
  Providence, R.I., 1967. \MR{0217075 (36 \#169)}

\bibitem[Jan04]{Jan04}
J.~C. Jantzen, \emph{Nilpotent orbits in representation theory}, Lie theory,
  Progr. Math., vol. 228, Birkh{\"a}user Boston, Boston, MA, 2004, pp.~1--211.
  \MR{2042689 (2005c:14055)}

\bibitem[Kor78]{Kor78}
N.~A. Kore{\v{s}}kov, \emph{Irreducible representations of the {H}amiltonian
  algebra of dimension {$p^{2}-2$}}, Izv. Vyssh. Uchebn. Zaved. Mat. (1978),
  no.~10(197), 37--46. \MR{522759 (81c:17020)}

\bibitem[Law95]{Law95}
R.~Lawther, \emph{Jordan block sizes of unipotent elements in exceptional
  algebraic groups}, Comm. Algebra \textbf{23} (1995), no.~11, 4125--4156.
  \MR{MR1351124 (96h:20084)}

\bibitem[LS04]{LS04}
M.~W. Liebeck and G.~M. Seitz, \emph{The maximal subgroups of positive
  dimension in exceptional algebraic groups}, Mem. Amer. Math. Soc.
  \textbf{169} (2004), no.~802, vi+227. \MR{MR2044850 (2005b:20082)}

\bibitem[LT11]{LT11}
R.~Lawther and D.~M. Testerman, \emph{Centres of centralizers of unipotent
  elements in simple algebraic groups}, Mem. Amer. Math. Soc. \textbf{210}
  (2011), no.~988, vi+188. \MR{2780340 (2012c:20127)}

\bibitem[McN05]{McN05}
G.~J. McNinch, \emph{Optimal {${\rm SL}(2)$}-homomorphisms}, Comment. Math.
  Helv. \textbf{80} (2005), no.~2, 391--426. \MR{2142248 (2006f:20055)}

\bibitem[Mel05]{Mel05}
H.~Melikyan, \emph{Maximal subalgebras of simple modular {L}ie algebras}, J.
  Algebra \textbf{284} (2005), no.~2, 824--856. \MR{2114582 (2005j:17019)}

\bibitem[Pre95]{Pre95}
Alexander Premet, \emph{An analogue of the {J}acobson-{M}orozov theorem for
  {L}ie algebras of reductive groups of good characteristics}, Trans. Amer.
  Math. Soc. \textbf{347} (1995), no.~8, 2961--2988. \MR{1290730 (95k:17012)}

\bibitem[PS06]{PS06}
Alexander Premet and Helmut Strade, \emph{Classification of finite dimensional
  simple {L}ie algebras in prime characteristics}, Representations of algebraic
  groups, quantum groups, and {L}ie algebras, Contemp. Math., vol. 413, Amer.
  Math. Soc., Providence, RI, 2006, pp.~185--214. \MR{2263096 (2007g:17016)}

\bibitem[Ria11]{Ria11}
K.~Rian, \emph{Extensions of the {W}itt algebra and applications}, J. Algebra
  Appl. \textbf{10} (2011), no.~6, 1233--1259. \MR{2864573 (2012k:17028)}

\bibitem[Sei87]{Sei87}
G.~M. Seitz, \emph{The maximal subgroups of classical algebraic groups}, Mem.
  Amer. Math. Soc. \textbf{67} (1987), no.~365, iv+286. \MR{MR888704
  (88g:20092)}

\bibitem[Sei91]{Sei91}
\bysame, \emph{Maximal subgroups of exceptional algebraic groups}, Mem. Amer.
  Math. Soc. \textbf{90} (1991), no.~441, iv+197. \MR{1048074 (91g:20038)}

\bibitem[SF88]{SF88}
H.~Strade and R.~Farnsteiner, \emph{Modular {L}ie algebras and their
  representations}, Monographs and Textbooks in Pure and Applied Mathematics,
  vol. 116, Marcel Dekker Inc., New York, 1988. \MR{929682 (89h:17021)}

\bibitem[She88]{She88}
G.~Y. Shen, \emph{Graded modules of graded {L}ie algebras of {C}artan type.
  {III}. {I}rreducible modules}, Chinese Ann. Math. Ser. B \textbf{9} (1988),
  no.~4, 404--417, A Chinese summary appears in Chinese Ann. Math. Ser. A
  {{\bf{9}}} (1988), no. 5, 634. \MR{998646 (90e:17034)}

\bibitem[Str04]{Str04}
H.~Strade, \emph{Simple {L}ie algebras over fields of positive characteristic.
  {I}}, de Gruyter Expositions in Mathematics, vol.~38, Walter de Gruyter \&
  Co., Berlin, 2004, Structure theory. \MR{2059133 (2005c:17025)}

\bibitem[Str09]{Str09}
Helmut Strade, \emph{Simple {L}ie algebras over fields of positive
  characteristic. {II}}, de Gruyter Expositions in Mathematics, vol.~42, Walter
  de Gruyter \& Co., Berlin, 2009, Classifying the absolute toral rank two
  case. \MR{2573283 (2011c:17035)}

\bibitem[Ten87a]{Ten87}
O.~K. Ten, \emph{On nonsemisimple maximal subalgebras of {L}ie {$p$}-algebras
  of classical type}, Vestnik Moskov. Univ. Ser. I Mat. Mekh. (1987), no.~2,
  65--67, 103. \MR{884515 (88g:17017)}

\bibitem[Ten87b]{Ten87a}
\bysame, \emph{Semisimple maximal subalgebras of {L}ie {$p$}-algebras of
  classical type}, Vestnik Moskov. Univ. Ser. I Mat. Mekh. (1987), no.~4,
  69--71, 103. \MR{913077 (89c:17022)}

\bibitem[Tes88]{Tes88}
D.~M. Testerman, \emph{Irreducible subgroups of exceptional algebraic groups},
  Mem. Amer. Math. Soc. \textbf{75} (1988), no.~390, iv+190. \MR{MR961210
  (90a:20082)}

\bibitem[Tes89]{Tes89}
\bysame, \emph{A construction of certain maximal subgroups of the algebraic
  groups {$E_6$} and {$F_4$}}, J. Algebra \textbf{122} (1989), no.~2, 299--322.
  \MR{999075 (90d:20082)}

\bibitem[Tes92]{Tes92}
\bysame, \emph{The construction of the maximal {$A_1$}'s in the exceptional
  algebraic groups}, Proc. Amer. Math. Soc. \textbf{116} (1992), no.~3,
  635--644. \MR{1100666 (93a:20073)}

\end{thebibliography}

\end{document}